\newcommand{\nc}{\newcommand}
\nc{\eg}{\mathfrak{e} } \nc{\fg}{\mathfrak{f} }
\nc{\vg}{\mathfrak{v} } \nc{\wg}{\mathfrak{w} }
\nc{\zg}{\mathfrak{z} } \nc{\ngo}{\mathfrak{n} }
\nc{\kg}{\mathfrak{k} } \nc{\mg}{\mathfrak{m} }
\nc{\bg}{\mathfrak{b} } \nc{\ggo}{\mathfrak{g} }
\nc{\ggob}{\overline{\mathfrak{g}} } \nc{\sog}{\mathfrak{so} }
\nc{\sug}{\mathfrak{su} } \nc{\spg}{\mathfrak{sp} }
\nc{\slg}{\mathfrak{sl} } \nc{\glg}{\mathfrak{gl} }
\nc{\cg}{\mathfrak{c} } \nc{\rg}{\mathfrak{r} }
\nc{\hg}{\mathfrak{h} } \nc{\tg}{\mathfrak{t} }
\nc{\ug}{\mathfrak{u} } \nc{\dg}{\mathfrak{d} }
\nc{\ag}{\mathfrak{a} } \nc{\pg}{\mathfrak{p} }
\nc{\sg}{\mathfrak{s} } \nc{\affg}{\mathfrak{aff} }
\nc{\pca}{\mathcal{P}} \nc{\nca}{\mathcal{N}}
\nc{\lca}{\mathcal{L}} \nc{\oca}{\mathcal{O}}
\nc{\mca}{\mathcal{M}} \nc{\tca}{\mathcal{T}}
\nc{\aca}{\mathcal{A}} \nc{\cca}{\mathcal{C}}
\nc{\gca}{\mathcal{G}} \nc{\sca}{\mathcal{S}}
\nc{\hca}{\mathcal{H}} \nc{\bca}{\mathcal{B}}
\nc{\dca}{\mathcal{D}} \nc{\val}{\operatorname{val}}
\nc{\vp}{\varphi} \nc{\ddt}{\tfrac{{\rm d}}{{\rm d}t}}
\nc{\dpar}{\tfrac{\partial}{\partial t}} \nc{\im}{\mathtt{i}}
\renewcommand{\Re}{{\rm Re}}
\nc{\SO}{\mathrm{SO}} \nc{\Spe}{\mathrm{Sp}} \nc{\Sl}{\mathrm{SL}}
\nc{\SU}{\mathrm{SU}} \nc{\Or}{\mathrm{O}} \nc{\U}{\mathrm{U}}
\nc{\Gl}{\mathrm{GL}} \nc{\Se}{\mathrm{S}} \nc{\Cl}{\mathrm{Cl}}
\nc{\Spein}{\mathrm{Spin}} \nc{\Pin}{\mathrm{Pin}}
\nc{\G}{\mathrm{GL}_n(\RR)} \nc{\g}{\mathfrak{gl}_n(\RR)}
\nc{\RR}{{\Bbb R}} \nc{\HH}{{\Bbb H}} \nc{\CC}{{\Bbb C}}
\nc{\ZZ}{{\Bbb Z}} \nc{\FF}{{\Bbb F}} \nc{\NN}{{\Bbb N}}
\nc{\QQ}{{\Bbb Q}} \nc{\PP}{{\Bbb P}}
\nc{\vs}{\vspace{.2cm}} \nc{\vsp}{\vspace{1cm}}
\nc{\ip}{\langle\cdot,\cdot\rangle} \nc{\ipp}{(\cdot,\cdot)}
\nc{\la}{\langle} \nc{\ra}{\rangle} \nc{\unm}{\tfrac{1}{2}}
\nc{\unc}{\tfrac{1}{4}} \nc{\und}{\tfrac{1}{16}}
\nc{\no}{\vs\noindent} \nc{\lamn}{\Lambda^2(\RR^n)^*\otimes\RR^n}
\nc{\lamp}{\Lambda^2\pg^*\otimes\pg}
\nc{\lamg}{\Lambda^2\ggo^*\otimes\ggo}
\nc{\lamngo}{\Lambda^2\ngo^*\otimes\ngo} \nc{\tangz}{{\rm T}^{\rm
Zar}} \nc{\mum}{/\!\!/} \nc{\kir}{/\!\!/\!\!/}
\nc{\Ri}{\tfrac{4\Ric_{\mu}}{||\mu||^2}} \nc{\ds}{\displaystyle}
\nc{\ben}{\begin{enumerate}} \nc{\een}{\end{enumerate}}
\nc{\f}{\frac} \nc{\lb}{[\cdot,\cdot]}
\nc{\isn}{\tfrac{1}{||v||^2}} \nc{\gkp}{(\ggo=\kg\oplus\pg,\ip)}
\nc{\ukh}{(\ug=\kg\oplus\hg,\ip)}
\nc{\Hess}{\operatorname{Hess}} \nc{\ad}{\operatorname{ad}}
\nc{\Ad}{\operatorname{Ad}} \nc{\rank}{\operatorname{rank}}
\nc{\Irr}{\operatorname{Irr}} \nc{\End}{\operatorname{End}}
\nc{\Aut}{\operatorname{Aut}} \nc{\Inn}{\operatorname{Inn}}
\nc{\Der}{\operatorname{Der}} \nc{\Ker}{\operatorname{Ker}}
\nc{\Iso}{\operatorname{I}} \nc{\Diff}{\operatorname{Diff}}
\nc{\Lie}{\operatorname{L}} \nc{\tr}{\operatorname{tr}}
\nc{\dif}{\operatorname{d}} \nc{\sen}{\operatorname{sen}}
\nc{\modu}{\operatorname{mod}} \nc{\Riem}{\operatorname{Rm}}
\nc{\Ricci}{\operatorname{Ric}} \nc{\sym}{\operatorname{sym}}
\nc{\symac}{\operatorname{sym^{ac}}}
\nc{\symc}{\operatorname{sym^{c}}} \nc{\scalar}{\operatorname{sc}}
\nc{\grad}{\operatorname{grad}} \nc{\ricci}{\operatorname{Ric}}
\nc{\nr}{\operatorname{nr}} \nc{\riccic}{\operatorname{ric^{c}}}
\nc{\riccig}{\operatorname{ric^{\gamma}}}
\nc{\Rin}{\operatorname{M}} \nc{\Le}{\operatorname{L}}
\nc{\tang}{\operatorname{T}} \nc{\level}{\operatorname{level}}
\nc{\rad}{\operatorname{r}} \nc{\abel}{\operatorname{ab}}
\nc{\CH}{\operatorname{CH}} \nc{\mcc}{\operatorname{mcc}}
\nc{\Adj}{\operatorname{Adj}} \nc{\Order}{\operatorname{O}}
\nc{\inj}{\operatorname{inj}}\nc{\R}{\operatorname{R}}
\nc{\Spec}{\operatorname{Spec}} \nc{\I}{\operatorname{I}}
\nc{\ric}{\operatorname{Rc}}
\theoremstyle{plain}
\newtheorem{theorem}{Theorem}[section]
\newtheorem{proposition}[theorem]{Proposition}
\newtheorem{corollary}[theorem]{Corollary}
\newtheorem{lemma}[theorem]{Lemma}
\theoremstyle{definition}
\newtheorem{definition}[theorem]{Definition}
\theoremstyle{remark}
\newtheorem{remark}[theorem]{Remark}
\newtheorem{example}[theorem]{Example}
\title{The Ricci flow in a class of solvmanifolds.}
\author{Romina M. Arroyo}
\address{FaMAF y CIEM, Universidad Nacional de C\'ordoba, C\'ordoba, Argentina}
\email{arroyo@famaf.unc.edu.ar}
\thanks{This research was supported by a fellowship from CONICET and grants from CONICET, FONCYT and SeCyT (Universidad Nacional de C\'ordoba)}
\begin{document}
\maketitle
\begin{abstract}
In this paper, we study the Ricci flow of solvmanifolds whose Lie algebra has an abelian ideal of codimension one, by using the bracket flow. We prove that solutions to the Ricci flow are immortal, the $\omega$-limit of bracket flow solutions is a single point, and that for any sequence of times there exists a subsequence in which the Ricci flow converges, in the pointed topology, to a manifold which is locally isometric to a flat manifold. We give a functional which is non-increasing along a normalized bracket flow that will allow us to prove that given a sequence of times, one can extract a subsequence converging to an algebraic soliton, and to determine which of these limits are flat. Finally, we use these results to prove that if a Lie group in this class admits a Riemannian metric of negative sectional curvature, then the curvature of any Ricci flow solution will become negative in finite time.
\end{abstract}
\section{Introduction}\label{int}
The Ricci flow is an evolution equation for a curve of Riemannian metrics on a manifold. In recent years, the Ricci flow has proven to be a very important tool. Many strong results, not only in Riemannian geometry, have been proven by using this equation. The objective of this paper is to study the Ricci flow for solvmanifolds whose Lie algebra has an abelian ideal of codimension one and get similar results to those obtained by J. Lauret in \cite{riccinil} in the case of nilmanifolds.

Let $(G,g)$ be a solvmanifold, i.e. a simply connected solvable Lie group $G$ endowed with a left-invariant metric $g.$ Assume that the Lie algebra of $G$ has an abelian ideal of codimension one. Consider the Ricci flow starting at $g$, that is,
\[
\dpar g(t) = -2\ric(g(t)), \quad g(0)=g.
\]
The solution $g(t)$ is a left-invariant metric for all $t$, thus each $g(t)$ is determined by an inner product on the Lie algebra. We will follow the approach in \cite{riccifl} to study the evolution of these metrics by varying Lie brackets instead of inner products.

More precisely, let $\mu$ be a Lie bracket on $\RR^{n+1}$ with an abelian ideal of codimension one. We may assume that $\mu$ is determined by $A=\ad_{\mu}(e_0)|_{\RR^n} \in \g,$ where $\RR^{n+1} = \RR e_0 \oplus \RR^n$ and $\RR^n$ is the abelian ideal, and so it will be denoted by $\mu_A.$ Each $\mu_A$ determines a Riemannian manifold $(G_{\mu_A},g_{\mu_A}),$ where $G_{\mu_A}$ is the simply connected Lie group with Lie algebra $(\RR^{n+1},\mu_A)$ and $g_{\mu_A}$ is the left-invariant metric determined by $\ip,$ the canonical inner product on $\RR^{n+1}.$ Every solvmanifold whose Lie algebra has an abelian ideal of codimension one is isometric to some $\mu_A$ (see Section \ref{pre}). By \cite[Theorem 3.3]{riccifl}, the Ricci flow solution is given by $g(t) = \varphi(t)^{*}g_{\mu(t)}$, where $\mu(t)$ is a family of Lie brackets solving a ODE called the bracket flow, and $\varphi(t):G \rightarrow G_{\mu(t)}$ is the Lie group isomorphism with derivative $h(t): (\RR^{n+1},\mu) \rightarrow (\RR^{n+1},\mu(t)),$ and $h(t)$ satisfies
$$
\tfrac{d}{dt} h = -h \ricci (\la \cdot, \cdot \ra_t), \quad \tfrac{d}{dt} h = - \ricci_{\mu(t)} h, \quad h(0)=I.
$$ In our case, we see that $\mu(t)=\mu_{A(t)},$ where $A(t) \in \g$ is the solution to the following ODE,
$$
{\tfrac{d}{dt}} A=-\tr(S(A)^{2})A+\tfrac{1}{2}[A,[A,A^{t}]]-\tfrac{1}{2}\tr(A)[A,A^{t}], \quad A(0)=A,
$$ and then we study the evolution of the matrix $A$. The main results in this paper can be summarized as follows:
\begin{itemize}
\item The Ricci flow solution $g(t)$ is defined for all $t \in (T_-,\infty),$ where $-\infty < T_-< 0,$ and if $\tr ({A}^2) \geq 0,$ then $g(t)$ is a Type-III solution (see Proposition \ref{a} and Proposition \ref{type3}).
\item The scaling-invariant functional $\tfrac{\|[A(t),A(t)^t]\|}{\|A(t)\|^2}$ is strictly decreasing unless $\mu_{A}$ is an algebraic soliton, in which case it is constant (see Lemma \ref{func}). This happens precisely when $A$ is either normal or nilpotent of a special kind (see Proposition \ref{solmuA}).
\item For any sequence $t_k \rightarrow \infty,$ there exists a subsequence of $(G_{\mu_{A(t_k)}}, g_{\mu_{A(t_k)}})$ which converges in the pointed topology to a flat manifold, up to local isometry (see Corollary \ref{subconv}).
\item If $\tr(A)=0$ (i.e. $G_{\mu_{A}}$ unimodular), then $B(t)=\tfrac{A(t)}{\|A(t)\|}$ converges to a matrix $B_{\infty},$ as $t \rightarrow \infty$ (see Lemma \ref{cociente} and Remark \ref{nochaos}).
\item For any sequence $t_k \rightarrow \infty,$ there exists a subsequence of $(G_{\mu_{B(t_k)}}, g_{\mu_{B(t_k)}})$ which converges in the pointed topology to $(G_{\mu_{B_{\infty}}}, g_{\mu_{B_{\infty}}})$ (up to local isometry), which is an algebraic soliton. In addition, $(G_{\mu_{B_{\infty}}}, g_{\mu_{B_{\infty}}})$ is non-flat, unless every eigenvalue of $A$ is purely imaginary (see Theorem \ref{convnorsol}).
\item If $G_{\mu_{A}}$ admits a negatively curved left-invariant metric, then there exists $t_0 > 0$ such that $g(t)$ is negatively curved for all $t \geq t_0$ (see Theorem \ref{to2}). This is not true in general for solvmanifolds (see Example \ref{ejsol}).
\end{itemize}
\vs \noindent{\it Acknowledgements.} I wish to express my deep
gratitude to my advisor, Jorge Lauret, for his invaluable guidance.
I am also grateful to Ramiro Lafuente and Roberto Miatello for helpful observations.
\section{Preliminaries}\label{pre}
\subsection{The Ricci flow}\label{frvh}
Let $(M,g)$ be a Riemannian manifold. The Ricci flow starting at $(M,g)$ is the following partial differential equation:
\begin{equation}\label{fr}
\dpar g(t) = -2\ric(g(t)), \quad g(0)=g,
\end{equation} where $g(t)$ is a curve of Riemannian metrics on $M$ and
$\ric(g(t))$ the Ricci tensor of the metric $g(t)$.

A complete Riemannian metric $g$ on a differentiable manifold $M$
is a Ricci soliton if its Ricci tensor satisfies
$$
\ric(g)=cg + L_X g, \mbox{ for some $c \in \mathbb{R}$, $X \in
\chi (M)$ complete},
$$ where $\chi(M)$ denotes the space of differentiable vector
fields on $M$ and $L_X$ the usual Lie derivative in the
direction of the field $X.$

Equivalently, Ricci solitons are precisely the metrics that evolve along the Ricci flow only by the action of diffeomorphisms and scaling (i.e. $g(t) = c(t) \varphi(t)^{*}g $), giving geometries that are equivalent to the starting point, for all time $t$ (see \cite{libro} for more information about Ricci solitons).

\begin{definition}\label{type3}
A Ricci flow solution $g(t)$ is said to be of Type-III if it is defined for $t \in
[0,\infty)$ and there exists $C \in \RR$ such that
$$
\|\Riem (g(t))\| \leq \tfrac{C}{t}, \qquad \forall t \in (0,\infty),
$$ where $\Riem(g(t))$ is the Riemann curvature tensor of the metric $g(t).$
\end{definition}
\subsection{Varying Lie brackets.}\label{vlb}
We fix $(\RR^n,\ip),$ with $\ip$ an inner product on $\RR^n$ and we define
$$
\mathfrak{L}_n = \{\mu: \RR^n \times \RR^n\rightarrow \RR^n: \mu {\hspace{.2cm}} \mbox{ is bilinear, skew-symmetric and satisfies Jacobi}\},
$$
$$\mathcal{N}_n = \{\mu \in \mathfrak{L}_n: \mu \mbox{ is nilpotent}\},$$ and $\ad_{\mu}:\RR^n \rightarrow \RR^n$ the adjoint representation of $\mu \in \mathfrak{L}_n$ (i.e. $\ad_{\mu}(x)(y)=\mu(x,y)$).

Then, $\Gl_n(\RR)$ acts on $\mathfrak{L}_n$ by
\begin{equation}\label{ac}
h.\mu(X,Y)=h\mu(h^{-1}X,h^{-1}Y), \quad X,Y \in \RR^n, \quad h \in \Gl_n(\RR), \quad \mu \in \mathfrak{L}_n.
\end{equation}

Each $\mu \in \mathfrak{L}_n$ defines a Lie group endowed with a left-invariant Riemannian metric,
$$
\mu \in \mathfrak{L}_n \rightsquigarrow (G_{\mu},\ip),
$$ where $G_{\mu}$ is the simply connected Lie group with Lie algebra $(\RR^n,\mu)$ endowed with the left-invariant Riemannian metric determined by the inner product $\ip.$ Often, we will denote this metric by $g_{\mu}.$ Note that $g_{\mu}$ may be viewed as a metric on $\RR^{n},$ in fact, $G_{\mu}$ is diffeomorphic to $\RR^n.$

Geometrically, each $h \in \Gl_n(\RR)$ determines a Riemannian isometry
\begin{equation}\label{isom}
(G_{h.\mu},\ip) \rightarrow (G_{\mu}, \la h \cdot,h \cdot \ra),
\end{equation} by exponentiating the Lie algebra isomorphism $h^{-1}: (\RR^n, h.\mu) \rightarrow (\RR^n, \mu)$. Thus the orbit $\Gl_n(\RR).\mu$ parameterizes the set of all left-invariant metrics on $G_{\mu}.$
\begin{definition}\label{solal}
Let $(G,g)$ be a Lie group with a left-invariant Riemannian metric; $g$ is called an algebraic soliton if
\begin{equation}\label{solalg}
\Ricci(g)= cI+ D, \quad \mbox{ for some} \quad c \in \RR,\quad D \in \Der(\ggo),
\end{equation} where $\Ricci(g)$ is the Ricci operator of $g$ and $\ggo$ is the Lie algebra of $G.$
\end{definition} Any homogeneous simply connected algebraic soliton is a Ricci soliton (see \cite[Proposition 3.3]{homRS}).
\subsection{Ricci flow on Lie groups and the bracket flow}\label{frh}
Let $(G,g)$ be a simply connected Lie group endowed with a left-invariant Riemannian metric. Then, if we fix $\ip$ an inner product on the Lie algebra of $G,$ $(G,g)$ is isometric to $(G_{\mu},g_{\mu}),$ for some $\mu \in \mathcal{L}_n.$ In this case, the equation of the Ricci flow (\ref{fr}) is equivalent to the following ordinary differential equation (see \cite[Section 3]{riccifl}):
\begin{equation}\label{frg}
\tfrac{d}{dt}\ip_t=-2\ric(\ip_t),\quad \la\cdot,\cdot\ra_0=\ip,
\end{equation} where $\ric(\ip_t):=\ric(g(t))(e)$ and $e$ is the identity of $G_{\mu}.$ In Subsection \ref{vlb}, we have observed that $\Gl_n(\RR).\mu$ parameterizes the set of all left invariant Riemannian metrics on $G_{\mu},$ then it is very natural to ask: How is the behavior of the Ricci flow in $\mathfrak{L}_n$?
\begin{definition}\label{bflg}
Given $\mu \in \mathcal{L}_n,$ the  bracket flow starting at $\mu$ is the following ordinary differential equation:
\begin{equation}\label{bf}
\tfrac{d}{dt}\mu(t) = \delta_{\mu(t)}\left(\ricci_{\mu(t)}\right)
, \quad \mu(0)=\mu,
\end{equation} where $\delta_{\mu}(A)=\mu(A\cdot,\cdot)+\mu(\cdot,A\cdot)-A\mu(\cdot,\cdot)$, $A \in \G$, $\mu \in \mathcal{L}_n$.
\end{definition}
Let us consider $g(t)$ the Ricci flow solution flow starting at $g_{\mu}$, and $\mu(t)$ the solution of the bracket flow starting at $\mu.$ By \cite{riccifl}, we know that $g(t)$ and $\mu(t)$ are related in the following way.
\begin{theorem}\label{rel}\cite[Theorem 3.3]{riccifl}
There exists time-dependent diffeomorphisms
$$
\varphi(t):G \rightarrow G_{\mu(t)}{\hspace{.2cm}} \mbox{such that}{\hspace{.2cm}} g(t)=\varphi(t)^{*}g_{\mu(t)}, {\hspace{.2cm}} \forall t \in (a,b).
$$
Moreover, if we identify $G=G_{\mu}$, then $\varphi(t):G_{\mu}\rightarrow G_{\mu(t)}$ can be chosen as the equivariant diffeomorphism determined by the Lie group isomorphism between $G_{\mu}$ and $G_{\mu(t)}$ with derivative  $h:\RR^n \rightarrow \RR^n$, where $h(t):=$d$\varphi(t)\mid_{e}:\RR^n \rightarrow \RR^n$ is the solution to any of the following systems of ordinary differential equations:
\begin{enumerate}
  \item $\frac{d}{dt}h=-h\ric(\ip_t)$, $h(0)=I.$
  \item $\frac{d}{dt}h=-\ricci_{\mu(t)}h$, $h(0)=I.$
\end{enumerate}
The following conditions hold:
\begin{itemize}
\item [(3)] $\ip_t=\la h \cdot,h \cdot\ra.$
  \item [(4)] $\mu(t)=h\mu_0(h^{-1} \cdot,h^{-1} \cdot)$.
\end{itemize}
\end{theorem}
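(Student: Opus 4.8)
The plan is to construct $\mu(t)$, $h(t)$ and $\varphi(t)$ directly from the stated ODEs, form the candidate metric $\tilde g(t):=\varphi(t)^{*}g_{\mu(t)}$, and then show it is \emph{the} Ricci flow solution by checking it solves \eqref{fr} and invoking uniqueness. First I would solve the bracket flow \eqref{bf} to obtain $\mu(t)$ on a maximal interval; since $\ricci_{\mu(t)}$ depends only on $\mu(t)$ and the fixed inner product $\ip$, this is a locally well-posed ODE on $\mathfrak{L}_n$. Next, define $h(t)$ as the solution of the linear system (2), $\tfrac{d}{dt}h=-\ricci_{\mu(t)}h$, $h(0)=I$, which exists on the same interval, and let $\varphi(t)\colon G_{\mu}\to G_{\mu(t)}$ be the Lie group isomorphism with $\dif\varphi(t)\mid_{e}=h(t)$. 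For $\varphi(t)$ to exist, $h(t)$ must be a Lie algebra isomorphism $(\RR^n,\mu)\to(\RR^n,\mu(t))$, i.e. $\mu(t)=h(t).\mu$ in the sense of \eqref{ac}; this is exactly assertion (4), which I prove first.

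To establish (4), set $\nu(t):=h(t).\mu$ and compute $\tfrac{d}{dt}\nu(t)$ by differentiating \eqref{ac}, using $\tfrac{d}{dt}(h^{-1})=-h^{-1}\dot h\,h^{-1}$. A short calculation yields the infinitesimal action $\tfrac{d}{dt}\nu(t)=-\delta_{\nu(t)}(\dot h\,h^{-1})$; since system (2) gives $\dot h\,h^{-1}=-\ricci_{\mu(t)}$, this becomes $\tfrac{d}{dt}\nu(t)=\delta_{\nu(t)}(\ricci_{\mu(t)})$. On the other hand, the bracket flow \eqref{bf} reads $\tfrac{d}{dt}\mu(t)=\delta_{\mu(t)}(\ricci_{\mu(t)})$, so both $\nu(t)$ and $\mu(t)$ solve $\tfrac{d}{dt}\xi=\delta_{\xi}(\ricci_{\mu(t)})$ with the same initial value $\mu$. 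As $\xi\mapsto\delta_{\xi}(A)$ is linear in $\xi$ for fixed $A$ and $\ricci_{\mu(t)}$ is a prescribed time-dependent operator, this is a linear (non-autonomous) ODE in $\xi$, whose uniqueness forces $\nu(t)=\mu(t)$. This legitimizes $\varphi(t)$ and gives (4).

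The geometric heart is the Ricci equivariance encoded in \eqref{isom}: writing $\ip_t:=\la h\cdot,h\cdot\ra$ and $\ricci_{\ip_t}$ for the Ricci operator of $(G_{\mu},\ip_t)$ (denoted $\ric(\ip_t)$ in the statement), the isometry of \eqref{isom} identifies $(G_{\mu(t)},\ip)=(G_{h.\mu},\ip)$ with $(G_{\mu},\ip_t)$ via $\varphi(t)^{-1}$, and since isometries conjugate Ricci operators by their derivative, one gets $\ricci_{\ip_t}=h^{-1}\ricci_{\mu(t)}h$. Plugging $\dot h=-\ricci_{\mu(t)}h$ into $\tfrac{d}{dt}\ip_t=\tfrac{d}{dt}\la h\cdot,h\cdot\ra$ and using that $\ricci_{\mu(t)}$ is $\ip$-symmetric gives $\tfrac{d}{dt}\ip_t=-2\la\ricci_{\mu(t)}h\cdot,h\cdot\ra$, which by the equivariance equals $-2\ric(\ip_t)$; thus $\ip_t$ solves \eqref{frg} with $\ip_0=\ip$, which is (3). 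The same relation converts (2) into (1), since $-\ricci_{\mu(t)}h=-h\,(h^{-1}\ricci_{\mu(t)}h)=-h\,\ricci_{\ip_t}$. Finally $\tilde g(t)=\varphi(t)^{*}g_{\mu(t)}$ is left-invariant with $\tilde g(t)(e)=\ip_t$, hence solves \eqref{fr}; as these metrics are complete, homogeneous and therefore of bounded curvature, the Chen--Zhu uniqueness theorem identifies $\tilde g(t)$ with the Ricci flow solution $g(t)$. I expect the main obstacle to be keeping all conventions mutually consistent---the direction of the isometry \eqref{isom}, whether $h$ or $h^{-1}$ is the relevant derivative, the equivariance $\ricci_{h.\mu}=h\,\ricci_{\ip_t}h^{-1}$, and the sign in the infinitesimal action $\pm\delta$---since a single flipped convention destroys the match between the two ODE systems; the secondary point requiring care is justifying Ricci flow uniqueness in the noncompact homogeneous setting, which rests on completeness and bounded curvature rather than on compactness.
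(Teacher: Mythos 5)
Your proposal is correct, but note that this paper contains no proof of the statement to compare against: Theorem \ref{rel} is quoted verbatim from \cite[Theorem 3.3]{riccifl}, where (as the remark following it says) it is proved in the general homogeneous setting. What you have written is essentially a faithful reconstruction of Lauret's argument, with the logical direction reversed. Lauret starts from the Ricci flow solution $\ip_t$ of (\ref{frg}), defines $h$ by ODE (1), shows $\ip_t=\la h\cdot,h\cdot\ra$ by observing both sides solve the same linear ODE, and then proves via the equivariance $\ricci_{h.\mu}=h\,\ric(\ip_t)\,h^{-1}$ and the computation $\tfrac{d}{dt}(h.\mu)=-\delta_{h.\mu}(\dot h h^{-1})$ that $h.\mu$ solves the bracket flow, whence $h.\mu=\mu(t)$ by ODE uniqueness; this yields (2) and (4) with no appeal to PDE uniqueness. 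You instead start from the bracket flow, define $h$ by (2), derive (4) and (3) by the same two ingredients (the infinitesimal action identity, whose sign you get right for the convention $\delta_{\mu}(A)=\mu(A\cdot,\cdot)+\mu(\cdot,A\cdot)-A\mu(\cdot,\cdot)$, and linear non-autonomous ODE uniqueness), and then must invoke Chen--Zhu uniqueness for complete bounded-curvature solutions to identify $\varphi(t)^{*}g_{\mu(t)}$ with \emph{the} Ricci flow solution. That last step is the real cost of your direction: within the paper's framework, where (\ref{fr}) for left-invariant metrics is reduced to the ODE (\ref{frg}), uniqueness is automatic, whereas your route needs the noncompact uniqueness theorem (which you correctly flag, and which Lauret also discusses when justifying the equivalence of (\ref{fr}) and (\ref{frg})). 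Your approach buys a constructive passage from bracket flow to metric flow, which is in fact how the theorem is used throughout this paper; Lauret's buys a cleaner logical economy. Both hinge on the same conventions for (\ref{ac}) and (\ref{isom}), which you handle consistently.
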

\begin{remark}
In this paper, Theorem \ref{rel} has only been stated in the case of Lie groups, however, in \cite{riccifl} it is stated and proved in the general homogeneous case.
\end{remark}
So, the Ricci flow $g(t)$ can be obtained from the bracket flow $\mu(t)$ by solving (2) and applying part (3). In the same way, we can obtain $\mu(t)$ solving (1) and replacing in (4). In particular, both flows are defined in the same interval of time. For more information, see \cite{riccifl}.

We now recall some results proved by J. Lauret in \cite{riccinil} about the Ricci flow for simply connected nilmanifolds.
\begin{theorem}\cite{riccinil}\label{nilva}
Let $\mu(t)$ be the solution bracket flow starting at $\mu \in \mathcal{N}_n,$ and $g(t)$ the Ricci flow starting at $g_{\mu}.$ Then
\begin{itemize}
\item[(i)] $\mu(t)$ is defined for all $t \in [0,\infty).$
\item[(ii)] $g(t)$ is a Type-III solution for a constant $C_n$ that only depends on the dimension $n.$
\item[(iii)] $\mu(t) \rightarrow 0,$ as $t \rightarrow \infty.$ Moreover, $g_{\mu(t)}$ converges in $\mathcal{C}^{\infty}$ to the flat metric $g_0.$
\item[(iv)] $g_{\frac{\mu(t)}{\|\mu(t)\|}}$ converges in $\mathcal{C}^{\infty}$ to an algebraic soliton $g_{\lambda}$ uniformly on compact sets in $\RR^n,$ as $t \rightarrow \infty.$
\end{itemize}
\end{theorem}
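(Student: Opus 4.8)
The plan is to run everything through the moment-map description of the Ricci operator of a nilmanifold. For the $\Gl_n(\RR)$-action \eqref{ac} on $\nca_n$, write $\pi(A)\mu=A\mu(\cdot,\cdot)-\mu(A\cdot,\cdot)-\mu(\cdot,A\cdot)=-\delta_\mu(A)$, and let $m:\nca_n\setminus\{0\}\to\End(\RR^n)$ be the associated moment map, characterized by $\langle m(\mu),A\rangle=\tfrac{1}{\|\mu\|^2}\langle\pi(A)\mu,\mu\rangle$ for all $A\in\g$; it takes symmetric values since the skew part of $A$ integrates to isometries fixing $\|\mu\|$. The single input that makes the proof run is the identity $\ricci_\mu=\tfrac14\|\mu\|^2\,m(\mu)$, valid for nilpotent $\mu$, together with the normalization $\tr\ricci_\mu=\scalar_\mu=-\tfrac14\|\mu\|^2$, so that $\tr m(\mu)=-1$ for every $\mu\neq0$.

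For (i): conjugation preserves nilpotency, so Theorem \ref{rel}(4) keeps $\mu(t)$ in $\nca_n$. Differentiating and using the characterization of $m$,
\begin{equation*}
\tfrac{d}{dt}\|\mu\|^2=2\langle\delta_\mu(\ricci_\mu),\mu\rangle=-2\|\mu\|^2\langle m(\mu),\ricci_\mu\rangle=-\tfrac12\|\mu\|^4\|m(\mu)\|^2\leq0,
\end{equation*}
so $\|\mu(t)\|$ is non-increasing. As \eqref{bf} is a smooth autonomous ODE on a finite-dimensional space, a trajectory that stays bounded cannot blow up in finite forward time, giving global existence on $[0,\infty)$. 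For (ii): from $\tr m(\mu)=-1$ and Cauchy--Schwarz one gets $\|m(\mu)\|^2\geq(\tr m(\mu))^2/\|I\|^2=1/n$, so the identity above yields $\tfrac{d}{dt}\|\mu\|^2\leq-\tfrac1{2n}\|\mu\|^4$. Integrating the resulting inequality for $1/\|\mu\|^2$ gives $\|\mu(t)\|^2\leq 2n/t$. Since the Riemann tensor of $g_\mu$ is a fixed quadratic expression in the structure constants, $\|\Riem_\mu\|\leq c\,\|\mu\|^2$ for a universal $c$; because $g(t)=\varphi(t)^*g_{\mu(t)}$ is isometric to $g_{\mu(t)}$ by Theorem \ref{rel}, $\|\Riem(g(t))\|=\|\Riem_{\mu(t)}\|\leq 2cn/t$, establishing Type-III (Definition \ref{type3}) with $C_n=2cn$.

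For (iii): the bound $\|\mu(t)\|^2\leq 2n/t$ gives $\mu(t)\to0$. In exponential coordinates the metric $g_\mu$ on $G_\mu\cong\RR^n$ and all its derivatives are real-analytic functions of $(\mu,x)$ reducing to the Euclidean data at $\mu=0$, whence $g_{\mu(t)}\to g_0$ in $\mathcal{C}^\infty$ uniformly on compact sets.

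For (iv): normalize by $\bar\mu(t)=\mu(t)/\|\mu(t)\|$. Since $\tfrac{d}{dt}\mu=-\tfrac14\|\mu\|^2\pi(m(\mu))\mu$ and $m$ is scale-invariant, after a positive time reparametrization $\bar\mu$ evolves by the negative gradient flow, restricted to the unit sphere of $\nca_n$, of the scale-invariant functional $F(\mu)=\|m(\mu)\|^2$, whose critical points correspond precisely to the nilsolitons, i.e.\ the $\mu$ with $\ricci_\mu=cI+D$, $c\in\RR$, $D\in\Der(\mu)$ (algebraic solitons, Definition \ref{solal}). The $\omega$-limit of $\bar\mu$ therefore lies in this critical set, giving subconvergence of $g_{\bar\mu(t)}$ to some algebraic soliton $g_\lambda$. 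The main obstacle is to upgrade subconvergence to convergence to a single limit $\lambda$: I would obtain this from a {\L}ojasiewicz gradient inequality for the real-analytic functional $F$ near its critical set, which forces the trajectory to have finite length and hence a unique limit, the essentially unique nilsoliton attached to the $\Gl_n(\RR)$-orbit of $\mu$; smooth convergence $g_{\bar\mu(t)}\to g_\lambda$ then follows from analyticity in $\bar\mu$ as in (iii).
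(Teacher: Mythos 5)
This theorem is not proved in the paper at all: it is quoted verbatim from \cite{riccinil} as background, so there is no internal proof to compare against, and your proposal in fact reconstructs the argument of that cited source. Your route --- the moment-map identity $\ricci_\mu=\tfrac14\|\mu\|^2 m(\mu)$ with $\tr m(\mu)=-1$, yielding $\tfrac{d}{dt}\|\mu\|^2\leq-\tfrac{1}{2n}\|\mu\|^4$ and hence $\|\mu(t)\|^2\leq 2n/t$, the quadratic scaling $\|\Riem_\mu\|\leq c\|\mu\|^2$ for Type-III, and for (iv) the identification of the normalized flow with the negative gradient flow of $F(\mu)=\|m(\mu)\|^2$ whose critical points are the nilsolitons, with uniqueness of the limit via a {\L}ojasiewicz-type result for analytic gradient flows --- is essentially Lauret's proof, and is also the template the present paper adapts in Section \ref{punlim}, where the analogous functional is $F(A)=\|[A,A^t]\|^2$ and \cite{Krd} is invoked for unique limits of gradient trajectories.
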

\section{The bracket flow in a class of solvmanifolds}\label{nue}
In this section, we study the bracket flow for a metric solvable Lie algebra with an abelian ideal of codimension one.

We consider $(\RR^{n+1},\ip),$ with $\ip$ the canonical inner
product on $\RR^{n+1}.$ If the dimension of the Lie algebra is
$n+1,$ then up to isomorphism, we can assume that the Lie bracket
has the following form with respect to the canonical basis
$\{e_0,e_1,\ldots,e_n\}:$
$$
\mu_{A}(e_0,e_i)=A e_i, {\hspace{.2cm}} i=1, \ldots, n, \quad
\mu_{A}(e_i,e_j)=0,{\hspace{.2cm}} \forall i,j \geq 1, \quad A \in
\glg_n(\mathbb{R}),
$$ where we think of an $A\in \g$ as an operator acting on $\RR^n,$ the subspace generated by $\{e_1, e_2,\ldots,e_n\}$ (i.e. the codimension-one abelian ideal). From now on, we denote these algebras by
$(\RR^{n+1},\mu_{A}),$ or simply, $\mu_{A}.$

\begin{lemma}
If $A_0 \in \g,$ then the bracket flow starting at $\mu_{A_0}$ is
given by $\mu_{A(t)},$ $t \in (T_-, T_+),$ where $A(t)$ satisfies
\begin{equation}\label{a'}
{\tfrac{d}{dt}} A=-\tr(S(A)^{2})A+\tfrac{1}{2}[A,[A,A^{t}]]-\tfrac{1}{2}\tr(A)[A,A^{t}], \quad A(0)=A_0.
\end{equation}
\end{lemma}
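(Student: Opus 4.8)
The plan is to evaluate the bracket-flow field $\delta_{\mu_A}(\ricci_{\mu_A})$ from \eqref{bf} explicitly on the family $\mathcal F=\{\mu_A:A\in\g\}$ and to check that the result is again a bracket of the form $\mu_{A'}$. Since every $\mu_A$ satisfies the Jacobi identity automatically (the ideal $\RR^n$ is abelian and of codimension one), $\mathcal F$ is a linear subspace of $\mathfrak L_{n+1}$ with tangent space $\{\mu_B:B\in\g\}$ at each point; hence once I show $\delta_{\mu_A}(\ricci_{\mu_A})=\mu_{A'}$, the flow field is tangent to $\mathcal F$ along $\mathcal F$, and uniqueness of solutions of the ODE \eqref{bf} forces the solution through $\mu_{A_0}$ to remain in $\mathcal F$. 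Writing $\mu(t)=\mu_{A(t)}$ and matching $\tfrac{d}{dt}\mu_{A}=\mu_{\dot A}$ with $\mu_{A'}$ then yields $\dot A=A'$, which is exactly \eqref{a'}.

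The crucial computation is an explicit formula for the Ricci operator $\ricci_{\mu_A}$ in the orthogonal decomposition $\RR^{n+1}=\RR e_0\oplus\RR^n$. I would start from the standard expression for the Ricci operator of a left-invariant metric,
\[
\ricci_{\mu}=M_{\mu}-\tfrac12 B_{\mu}-S(\ad_{\mu}H_{\mu}),
\]
where $\la M_\mu X,Y\ra=-\tfrac12\sum_{i,j}\la\mu(X,e_i),e_j\ra\la\mu(Y,e_i),e_j\ra+\tfrac14\sum_{i,j}\la\mu(e_i,e_j),X\ra\la\mu(e_i,e_j),Y\ra$, $\la B_\mu X,Y\ra=\tr(\ad_\mu X\,\ad_\mu Y)$, and $H_\mu$ is the mean curvature vector determined by $\la H_\mu,X\ra=\tr\ad_\mu X$. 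A direct evaluation for $\mu_A$ (using $\mu_A(e_0,e_i)=Ae_i$, $\mu_A(e_i,e_j)=0$, and $Ae_i\in\RR^n$) gives: $H_{\mu_A}=\tr(A)\,e_0$, so that $\ad_{\mu_A}H_{\mu_A}$ vanishes on $e_0$ and equals $\tr(A)A$ on $\RR^n$, whence $S(\ad_{\mu_A}H_{\mu_A})$ is block-diagonal, zero on $\RR e_0$ and equal to $\tr(A)S(A)$ on $\RR^n$ with $S(A)=\unm(A+A^t)$; the Killing operator $B_{\mu_A}$ is block-diagonal, equal to $\tr(A^2)$ on $\RR e_0$ and to $0$ on $\RR^n$; and $M_{\mu_A}$ is block-diagonal, equal to $-\unm\|A\|^2$ on $\RR e_0$ and to $\unm[A,A^t]$ on $\RR^n$. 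Collecting the three pieces and using $\tr(S(A)^2)=\unm(\|A\|^2+\tr(A^2))$, I obtain that $\ricci_{\mu_A}$ is block-diagonal, acting as the scalar $r_0:=-\tr(S(A)^2)$ on $\RR e_0$ and as the operator $R:=\unm[A,A^t]-\tr(A)S(A)$ on $\RR^n$.

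With this block form the remaining algebra is short. Using $\delta_\mu(E)(X,Y)=\mu(EX,Y)+\mu(X,EY)-E\mu(X,Y)$ together with $\ricci_{\mu_A}e_0=r_0e_0$ and $\ricci_{\mu_A}e_i=Re_i\in\RR^n$, one checks $\delta_{\mu_A}(\ricci_{\mu_A})(e_i,e_j)=0$ for $i,j\geq1$ (every term has both arguments in the abelian ideal), confirming tangency to $\mathcal F$, while
\[
\delta_{\mu_A}(\ricci_{\mu_A})(e_0,e_i)=r_0Ae_i+ARe_i-RAe_i=\bigl(r_0A+[A,R]\bigr)e_i .
\]
Thus $A'=r_0A+[A,R]$, and since $[A,S(A)]=\unm[A,A^t]$ one gets $[A,R]=\unm[A,[A,A^t]]-\unm\tr(A)[A,A^t]$, giving precisely the right-hand side of \eqref{a'}. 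I expect the main obstacle to be the Ricci computation itself: carrying out the three moment-map-type sums without error and, in particular, verifying that the mixed $\RR e_0$–$\RR^n$ block of each term vanishes so that $\ricci_{\mu_A}$ is genuinely block-diagonal — this is what makes the subsequent evaluation of $\delta$ clean.
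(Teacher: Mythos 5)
Your proposal is correct and follows essentially the same route as the paper: both establish the block-diagonal form \eqref{ricmu_A} of $\ricci_{\mu_A}$, evaluate $\delta_{\mu_A}(\ricci_{\mu_A})$ on the pairs $(e_0,e_i)$ and $(e_i,e_j)$ to see the field is tangent to the family $\{\mu_B\}$, and reduce to the ODE \eqref{a'} via uniqueness. The only difference is cosmetic: where the paper cites the solvmanifold Ricci formula from \cite[Section 4]{riccisol}, you rederive it from the general expression $\ricci_\mu=M_\mu-\tfrac12 B_\mu-S(\ad_\mu H_\mu)$, including the (correct) verification that all mixed $\RR e_0$--$\RR^n$ blocks vanish.
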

\begin{proof}
By using the formula for the Ricci operator of a solvmanifold (see
for instance \cite[Section 4]{riccisol}), we obtain that the Ricci
operator of $(G_{\mu_{A}},g_{\mu_{A}})$ with respect to the basis
$\{e_0,e_1,\ldots,e_n\}$ is represented by the matrix
\begin{equation}\label{ricmu_A}
\ricci_{\mu_{A}}= \left(
  \begin{array}{cc}
    -\tr(S(A)^{2}) & 0 \\
    0 & \tfrac{1}{2}[A,A^{t}]-\tr(A)S(A) \\
  \end{array}
\right),
\end{equation} where $S(A)=\frac{1}{2}(A +A^{t})$ is the symmetric part of the
matrix $A$ and $\tr(A)$ is the trace. Then,
$$
\begin{array}{rcl}
\delta_{\mu_A}(\ricci_{\mu_A})(e_0,e_i)&=&\mu_A(\ricci_{\mu_A}e_0,e_i)+\mu_A (e_0,\ricci_{\mu_A}e_i)-\ricci_{\mu_A}\mu_A(e_0,e_i)\\
&=& -\tr(S(A)^{2})A e_i  + A \ricci_{\mu_A}|_{\RR^n}e_i - \ricci_{\mu_A}|_{\RR^n} A e_i\\
&=& -\tr(S(A)^{2})A e_i + [A,\ricci_{\mu_A}|_{\RR^n}] e_i\\
&=&\left(-\tr(S(A)^{2})A+\tfrac{1}{2}[A,[A,A^{t}]]-\tfrac{1}{2}\tr(A)[A,A^{t}]\right)e_i,
\end{array}
$$ and, on the other hand, we have that
$\delta_{\mu_A}(\ricci_{\mu_A})(e_i,e_j)=0,$ for all $i,j\geq 1,$ as
$\mu_A|_{\RR^n \times \RR^n}=0.$ So,
$$
\delta_{\mu_A}(\ricci_{\mu_A})=\mu_{B}, \quad \mbox{where} \quad B=-\tr(S(A)^{2})A+\tfrac{1}{2}[A,[A,A^{t}]]-\tfrac{1}{2}\tr(A)[A,A^{t}].
$$ Then, this family of Lie algebras is invariant under the bracket flow,
which is equivalent to (\ref{a'}). In addition, the maximal interval
of time where $\mu_{A(t)}$ exists is of the form $(T_-, T_+)$ for
some $-\infty \leq T_- < 0 < T_+ \leq \infty,$ since \eqref{a'} is
an ODE.
\end{proof}
So, given a matrix $A_0,$ we have that the bracket flow starting at
$\mu_{A_0}$ is equivalent to an evolution equation for a curve of
matrices with initial condition $A_0.$ In what follows, we will
often think of the bracket flow as this evolution.
\begin{remark}\label{flat}
Note that the only fixed points of the system (\ref{a'}) are the skew-symmetric
matrices, which are precisely the flat solvmanifolds of the form
$\mu_A$, since by (\ref{ricmu_A}) they are precisely the Ricci-flat ones (see \cite{Alk} and \cite{Mln}).
\end{remark}
\begin{proposition}\label{solmuA}
For any $A_0 \in \g,$ the following conditions are equivalent:
\begin{itemize}
  \item[(i)] $\mu_{A_0}$ is an algebraic soliton.
  \item[(ii)] $A_0$ is either a normal matrix or $A_0$ is a nilpotent matrix such that $[A_0,[A_0,{A_0}^t]]= c A_0,$ for some $c \in \RR.$
\end{itemize} Moreover, the evolution of the bracket flow is respectively given by $$A(t)= \left(2 \tr(S(A_0)^2)t+1\right)^{-1/2}A_0 \quad \mbox{or} \quad A(t)=\left((-\|A_0\|^2+c)t+1\right)^{-1/2}A_0.$$
\end{proposition}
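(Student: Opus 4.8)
The plan is to convert the soliton equation into a single algebraic identity on $A_0$ and then analyze it. Writing $\ricci_{\mu_A}=\mathrm{diag}(r,R)$ in the splitting $\RR e_0\oplus\RR^n$ of \eqref{ricmu_A}, with scalar $r=-\tr(S(A)^2)$ and $R=\tfrac12[A,A^t]-\tr(A)S(A)$ on $\RR^n$, I would first exploit that the Ricci operator is self-adjoint: for any $c$ the candidate $D:=\ricci_{\mu_A}-cI$ is automatically symmetric and block diagonal, so the only question is for which $c$ such a $D$ is a derivation of $\mu_A$. Testing the derivation identity on the brackets $\mu_A(e_0,e_i)=Ae_i$ and $\mu_A(e_i,e_j)=0$ shows that a block-diagonal $\mathrm{diag}(a,E)$ is a derivation if and only if $[E,A]=aA$ (the relations attached to the abelian ideal are automatic). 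Taking $E=R-cI$ and $a=r-c$, this becomes $[R,A]=(r-c)A$. Hence \emph{$\mu_{A}$ is an algebraic soliton if and only if $[R,A]=aA$ for some $a\in\RR$}, with soliton constant $c=r-a$ and derivation $\mathrm{diag}(a,R-cI)$.

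Next I would expand, using $[S(A),A]=-\tfrac12[A,A^t]$ and $[[A,A^t],A]=-[A,[A,A^t]]$, to get
\[
[R,A]=-\tfrac12[A,[A,A^t]]+\tfrac12\tr(A)[A,A^t],
\]
so the soliton condition reads $[A,[A,A^t]]-\tr(A)[A,A^t]=-2aA$. I would then split on whether $a=0$. The crucial point is that $E=R-cI$ is \emph{symmetric}, so $\ad_E\colon\g\to\g$, $X\mapsto[E,X]$, is self-adjoint for the trace inner product and $[E,A]=aA$ says $A$ is an eigenvector of $\ad_E$. Orthogonally diagonalizing $E$ as $\RR^n=\bigoplus_\alpha V_\alpha$, the relation $[E,A]=aA$ forces $A(V_\alpha)\subseteq V_{\alpha+a}$; when $a\neq0$, since $E$ has finitely many eigenvalues, $A^k=0$ for large $k$, i.e. $A$ is nilpotent. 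For nilpotent $A$ one has $\tr(A)=0$, whence $R=\tfrac12[A,A^t]$ and the condition collapses to $[A,[A,A^t]]=cA$ with $c=-2a$, the second alternative of (ii). If instead $a=0$, the condition is $[A,[A,A^t]]=\tr(A)[A,A^t]$; pairing with $A$ and using the two identities $\langle[A,A^t],A\rangle=0$ and $\langle[A,[A,A^t]],A\rangle=-\|[A,A^t]\|^2$ (both consequences of $\ad_A^{*}=\ad_{A^t}$ together with the symmetry of $[A,A^t]$) yields $\|[A,A^t]\|^2=0$, so $A$ is normal. The step I expect to be the main obstacle is exactly the nilpotency argument, and the symmetry of $E$ (inherited from that of $\ricci_{\mu_A}$) is what makes the eigenvalue-shift argument available over $\RR$.

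For the converse (ii)$\Rightarrow$(i) I would simply verify $[R,A]=aA$ in each case. If $A$ is normal, then $[A,A^t]=0$ and $A$ commutes with $S(A)$, so $[R,A]=0$ and $\mu_A$ is a soliton with $a=0$, $c=r$. If $A$ is nilpotent with $[A,[A,A^t]]=cA$, then $\tr(A)=0$, $R=\tfrac12[A,A^t]$ and $[R,A]=-\tfrac12[A,[A,A^t]]=-\tfrac{c}{2}A$, so $\mu_A$ is a soliton with $a=-c/2$ and $c$-value $r-a$.

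Finally, for the evolution formulas I would use the ansatz $A(t)=f(t)A_0$. In both cases $[A_0,[A_0,A_0^t]]$ is a scalar multiple of $A_0$ and the last term of \eqref{a'} drops out (normal: $[A_0,A_0^t]=0$; nilpotent: $\tr(A_0)=0$), so substituting $A=fA_0$ reduces \eqref{a'} to the separable scalar equation $f'=-\alpha f^3$, $f(0)=1$, with $\alpha=\tr(S(A_0)^2)$ in the normal case and $2\alpha=\|A_0\|^2-c$ in the nilpotent case (using $\tr(S(A_0)^2)=\tfrac12\|A_0\|^2$ when $A_0$ is nilpotent). Its solution is $f(t)=(2\alpha t+1)^{-1/2}$, and by uniqueness of solutions of \eqref{a'} the curve $f(t)A_0$ is the bracket flow, giving the displayed evolution of $A$.
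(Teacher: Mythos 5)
Your proof is correct, and it takes a genuinely different route from the paper's. The paper proves (i)$\Rightarrow$(ii) by splitting on the dimension of the nilradical of $\mu_{A_0}$: if it equals $n$, normality of $A_0$ is quoted from the solvsoliton classification \cite[Theorem 4.8]{riccisol}, while in the nilpotent case the relation $[A_0,[A_0,{A_0}^t]]=cA_0$ is extracted from $D(e_0)=\lambda e_0$ together with the identity $[\ad_{\mu_{A_0}}(e_0),D]=-\ad_{\mu_{A_0}}(D(e_0))$; the converse is again a citation of \cite{riccisol} in the normal case and an explicitly exhibited derivation in the nilpotent one. You instead reduce the whole statement, self-containedly, to the single equation $[R,A_0]=aA_0$, where $R$ is the symmetric $\RR^n$-block of \eqref{ricmu_A} (legitimate, since $D=\ricci_{\mu_{A_0}}-cI$ is forced to be block diagonal and symmetric, and a block-diagonal map is a derivation of $\mu_{A_0}$ exactly under that commutator relation). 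Your dichotomy $a=0$ versus $a\neq 0$ then replaces the paper's nilradical dichotomy: the trace pairings $\la [A_0,{A_0}^t],A_0\ra=0$ and $\la [A_0,[A_0,{A_0}^t]],A_0\ra=-\|[A_0,{A_0}^t]\|^2$ give normality when $a=0$, and the eigenvalue-shift argument $A_0(V_\alpha)\subseteq V_{\alpha+a}$ for the symmetric operator gives nilpotency when $a\neq 0$ --- precisely the content the paper outsources to the solvsoliton classification. Both implications then follow from the same criterion, so your converse is also cheaper than the paper's explicit derivation check; what you give up is only the connection to the general theory of \cite{riccisol}. Your treatment of the evolution formulas (ansatz $A(t)=f(t)A_0$, reduction to a separable scalar ODE, uniqueness) coincides with the paper's.

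One point where you disagree with the statement itself: in the nilpotent case you obtain $f(t)=\bigl((\|A_0\|^2-c)t+1\bigr)^{-1/2}$, whereas the proposition displays $\bigl((-\|A_0\|^2+c)t+1\bigr)^{-1/2}$. Your sign is the correct one. The scalar equation, which the paper derives as well, is $a'=\tfrac{-\|A_0\|^2+c}{2}\,a^3$, $a(0)=1$, and its solution is $a(t)=\bigl((\|A_0\|^2-c)t+1\bigr)^{-1/2}$. Moreover, pairing $[A_0,[A_0,{A_0}^t]]=cA_0$ with $A_0$ gives $c=-\|[A_0,{A_0}^t]\|^2/\|A_0\|^2\leq 0$, so the formula as displayed in the proposition would blow up in finite time, contradicting Proposition \ref{a}. (A quick check with $A_0=\left(\begin{smallmatrix}0&1\\0&0\end{smallmatrix}\right)$: here $c=-2$, $\|A_0\|^2=1$, and the flow is $f(t)=(3t+1)^{-1/2}$, matching your formula.) So the second displayed formula in the statement contains a sign slip, and your version is the one that should stand.
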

\begin{proof}
Assuming part (i), we have two cases:
\begin{itemize}
  \item If the nilradical of $\mu_{A_0}$ has dimension $n,$ then $A_0$ is a normal matrix (see \cite[Theorem 4.8]{riccisol}).
  \item If the nilradical of $\mu_{A_0}$ has dimension $n+1,$ then $\mu_{A_0}$ is nilpotent and so $A_0$ is a nilpotent matrix. In addition, from (\ref{ricmu_A}), we have that
$$
\left(
  \begin{array}{cc}
    -\frac{1}{2}\|A_0\|^2 & 0 \\
    0 & \tfrac{1}{2}[A_0,{A_0}^{t}] \\
  \end{array}
\right) = \ricci_{\mu_{A_0}}= cI+D,
$$
\end{itemize} and it follows that $D(e_0)=\lambda e_0.$ Also, we know that $[\ad_{\mu_{A_0}}(e_0),D]=-\ad_{\mu_{A_0}}(D(e_0)),$ so
{\small $$\left(
    \begin{array}{cc}
      0 & 0 \\
      0 & \frac{1}{2}[A_0,[A_0,{A_0}^t]] \\
    \end{array}
  \right)
 = [\ad_{\mu_{A_0}}(e_0),\ricci_{\mu_{A_0}}] = [\ad_{\mu_{A_0}}(e_0),D] = -\lambda \ad_{\mu_{A_0}}(e_0) = -\lambda \left(
                                     \begin{array}{cc}
                                       0 & 0 \\
                                       0 & A_0 \\
                                     \end{array}
                                   \right).
$$}
Conversely, if $A_0$ is a normal matrix, then $\mu_{A_0}$ is an algebraic soliton (see \cite[Theorem 4.8]{riccisol}) and if $A_0$ is a nilpotent matrix which satisfies $[A_0,[A_0,{A_0}^t]]= c A_0,$ then
$$
 \ricci_{\mu_{A_0}} = \left(
  \begin{array}{cc}
    -\frac{1}{2}\|A_0\|^2 & 0 \\
    0 & \tfrac{1}{2}[A_0,{A_0}^{t}] \\
  \end{array}
\right) = \tfrac{c-\|A_0\|^2}{2}I+\left(
                                         \begin{array}{cc}
                                           -\tfrac{c}{2} & 0 \\
                                           0 & -\tfrac{c}{2}I+\tfrac{1}{2}\|A_0\|^2I+\tfrac{1}{2}[A_0,{A_0}^t] \\
                                         \end{array}
                                       \right),
$$ and it is easy to see that $\left(
                                         \begin{array}{cc}
                                           -\tfrac{c}{2} & 0 \\
                                           0 & -\tfrac{c}{2}I+\tfrac{1}{2}\|A_0\|^2I+\tfrac{1}{2}[A_0,{A_0}^t] \\
                                         \end{array}
                                       \right)$ is a derivation of
$\mu_{A_0},$ and so (i) is proved.

Finally, if $\mu_{A_0}$ is an algebraic soliton, then the family $A(t)=a(t)A_0$ is invariant under the flow. Therefore, we have that
\begin{itemize}
\item If $A_0$ is a normal matrix, then the bracket flow is equivalent to the following differential equation for $a=a(t)$
$$a'=-\tr(S(A_0)^2)a^3, \quad a(0)=1,$$ and so the solution is $A(t)= (2 \tr(S(A_0)^2)t+1)^{-1/2}A_0.$
\item If $A_0$ is a nilpotent matrix, then the bracket flow is equivalent to
$$a'=\frac{-\|A_0\|^2+c}{2}a^3, \quad a(0)=1,$$ and so the solution is $A(t)=((-\|A_0\|^2+c)t+1)^{-1/2}A_0.$
\end{itemize}
\end{proof}
The first natural question that arises is related with the maximal time interval of the solution $A(t).$ An important point to observe here
is that $-\infty<T_- $ since $(G_{\mu_{A(t)}}, g_{\mu_{A(t)}})$
always has non-positive scalar curvature (see (\ref{ricmu_A})).
\begin{proposition}\label{a}
$A(t)$ is always defined for all $t \in [0,\infty)$ (i.e. $T_+=\infty$).
\end{proposition}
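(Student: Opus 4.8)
The plan is to bound $\|A(t)\|$ a priori on the whole forward interval of existence and then invoke the standard continuation principle for ODEs. Since the right-hand side of \eqref{a'} is a polynomial (hence smooth) map $\g\to\g$, the maximal solution can only fail to extend past a finite $T_+$ if $\|A(t)\|\to\infty$ as $t\to T_+^-$; a solution that remains in a fixed compact set is automatically defined for all forward time. Thus it suffices to control $\|A(t)\|$, and I expect the cleanest route to be a monotonicity statement for the norm.

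First I would compute the evolution of $\|A(t)\|^2=\tr(AA^t)$ under \eqref{a'}. Writing $\langle X,Y\rangle=\tr(XY^t)$ for the canonical inner product on $\g$, we have $\tfrac{d}{dt}\|A\|^2=2\langle A',A\rangle$, so substituting the three terms of \eqref{a'} reduces everything to evaluating $\langle A,A\rangle$, $\langle[A,[A,A^t]],A\rangle$ and $\langle[A,A^t],A\rangle$. The key algebraic observations are: (a) $N:=[A,A^t]=AA^t-A^tA$ is \emph{symmetric}, since $(AA^t-A^tA)^t=AA^t-A^tA$; (b) the trace term drops out, because $\langle[A,A^t],A\rangle=\tr([A,A^t]A^t)=\tr(AA^tA^t)-\tr(A^tAA^t)=0$ by cyclicity of the trace; and (c) using the symmetry of $N$, a short cyclic computation gives
\[
\langle[A,N],A\rangle=\tr\big((AN-NA)A^t\big)=\tr\big(N(A^tA-AA^t)\big)=-\tr(N^2)=-\|[A,A^t]\|^2 .
\]
Combining these with $\langle -\tr(S(A)^2)A,\,A\rangle=-\tr(S(A)^2)\|A\|^2$ yields
\[
\tfrac{d}{dt}\|A\|^2=-2\tr(S(A)^2)\|A\|^2-\|[A,A^t]\|^2\le 0,
\]
since $\tr(S(A)^2)\ge0$ for the symmetric matrix $S(A)$. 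Hence $\|A(t)\|$ is non-increasing and $\|A(t)\|\le\|A_0\|$ throughout the forward interval of existence.

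With this bound in hand the conclusion is immediate: the solution stays inside the closed ball of radius $\|A_0\|$ in $\g$, so by the continuation principle above it cannot escape to infinity in finite forward time, and therefore $T_+=\infty$. I expect the only genuine work to be the two trace identities in steps (b) and (c) (together with the symmetry of $[A,A^t]$ noted in (a)); the continuation step is routine ODE theory. As a consistency check, note that when $A$ is skew-symmetric both terms on the right vanish, in agreement with Remark \ref{flat} that such $A$ are exactly the equilibria of \eqref{a'}.
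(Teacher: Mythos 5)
Your proposal is correct and follows essentially the same route as the paper: both compute $\tfrac{d}{dt}\|A\|^2$ along \eqref{a'}, use the identities $\langle A,[A,A^t]\rangle=0$ and $\langle A,[A,[A,A^t]]\rangle=-\|[A,A^t]\|^2$ to obtain $\tfrac{d}{dt}\|A\|^2=-2\tr(S(A)^2)\|A\|^2-\|[A,A^t]\|^2\le 0$ (the paper's equation \eqref{normA}), and conclude long-time existence since the solution stays in a compact set. Your spelled-out verifications of the trace identities and of the symmetry of $[A,A^t]$ are just details the paper leaves implicit.
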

\begin{proof}
By using (\ref{a'}), we get
$$
{\tfrac{d}{dt}} \|A\| ^{2}= 2\langle A,-\tr(S(A)^{2})A \rangle+
2\langle A,\tfrac{1}{2}[A,[A,A^{t}]] \rangle- 2\langle
A,\tfrac{1}{2}\tr(A)[A,A^{t}])\rangle.
$$ But since $\langle A,[A,[A,A^{t}]]\rangle=-\|[A,A^{t}]\|^{2}$ and
$\langle A,[A,A^{t}]\rangle=0,$ it follows that
\begin{equation}\label{normA}
{\tfrac{d}{dt}} \|A\| ^{2}=-2\tr(S(A)^{2})\|A\| ^{2}-\|[A,A^{t}]\|^{2} \leq 0.
\end{equation}
Therefore, $\|A\| ^{2}$ decreases and so $A(t)$ is defined for all
$t \in [0,\infty),$ as the solution remains in a
compact subset.
\end{proof}
\begin{remark}
By Theorem \ref{rel} and the previous proposition, we obtain that
the Ricci flow starting at any of these solvmanifolds
$(G_{\mu_{A_0}},\ip)$ is defined for $t \in [0,\infty),$ often
called an immortal solution.
\end{remark}
In what follows, we introduce a positive, non-increasing function
along the normalized bracket flow $\tfrac{A(t)}{\|A(t)\|},$ which is
strictly decreasing unless $\mu_{\tfrac{A_0}{\|A_0\|}} $ is an
algebraic soliton. The advantage of having this function lies in the
fact that it will allow us to prove that for any sequence $t_k \rightarrow \infty$ there exists a subsequence in which the normalized bracket flow
always converges to an algebraic soliton.
\begin{lemma}\label{func}
Let $\mu_{A(t)}$ be the bracket flow starting at $\mu_{A_0}$ and set
$B(t)=\tfrac{A(t)}{\|A(t)\|}.$ Then $F(B)=\|[B,B^t]\|^2$ is a
positive, non-increasing function along the flow. Moreover, $\tfrac{d}{dt}
|_{t=t_0}F (B) = 0,$ for some $t_0,$ if and only if $\mu_{B(0)}$
is an algebraic soliton.
\end{lemma}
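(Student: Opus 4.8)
The plan is to work with the scale-invariant function $f(A)=\|[A,A^t]\|^2/\|A\|^4$ on $\g\setminus\{0\}$, so that $F(B(t))=f(A(t))$ for the (nonzero) bracket flow solution $A(t)$, and to differentiate along \eqref{a'}. Because $f$ is invariant under $A\mapsto cA$, Euler's relation gives $\la\nabla f(A),A\ra=0$, so the scaling term $-\tr(S(A)^2)A$ of \eqref{a'} makes no contribution to $\tfrac{d}{dt}f(A(t))=\la\nabla f(A),\tfrac{d}{dt}A\ra$. First I would compute the Euclidean gradient of $M(A)=\|[A,A^t]\|^2$: writing $C=[A,A^t]$ (which is symmetric), the identities $\la C,[X,A^t]\ra=\la C,[A,X^t]\ra=\la[C,A],X\ra$ give $\nabla M=4[C,A]=-4[A,[A,A^t]]$, and hence an explicit formula for $\nabla f$.

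Next I would substitute the remaining two terms $\tfrac12[A,[A,A^t]]$ and $-\tfrac12\tr(A)[A,A^t]$ of \eqref{a'} into $\la\nabla f(A),\cdot\ra$. Using the orthogonality relations $\la A,[A,A^t]\ra=0$ and $\la[A,[A,A^t]],[A,A^t]\ra=0$ (both routine trace computations), the $\tr(A)$ term also drops out and I expect to arrive at
\[
\tfrac{d}{dt}F(B)=\tfrac{2}{\|A\|^6}\Big(\|[A,A^t]\|^4-\|A\|^2\,\|[A,[A,A^t]]\|^2\Big).
\]
Monotonicity then follows from Cauchy--Schwarz: the identity $\|[A,A^t]\|^2=-\la A,[A,[A,A^t]]\ra$ used in the proof of Proposition \ref{a} gives $\|[A,A^t]\|^4=\la A,[A,[A,A^t]]\ra^2\le\|A\|^2\,\|[A,[A,A^t]]\|^2$, whence $\tfrac{d}{dt}F(B)\le0$. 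Positivity of $F$ is clear, and $A(t)\ne0$ for every finite $t$ (otherwise $A\equiv0$ by uniqueness), so $B(t)$ is well defined throughout.

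For the equality statement I would note that $\tfrac{d}{dt}\big|_{t=t_0}F(B)=0$ precisely when Cauchy--Schwarz is an equality, i.e. when $[A,[A,A^t]]=\lambda A$ for some $\lambda\in\RR$, with $A=A(t_0)$. Taking traces forces $\lambda\tr(A)=0$. If $\lambda=0$ then $A$ commutes with the symmetric matrix $D=[A,A^t]$, and the trace computation $\tr(D^2)=\tr(D[A,A^t])$, simplified by moving $D$ past $A$ via $[A,D]=0$, yields $\tr(D^2)=0$, so $D=0$ and $A$ is normal. If $\lambda\ne0$ then $\tr(A)=0$, and diagonalizing $D=[A,A^t]$ reduces $[A,D]=-\lambda A$ to the condition that every nonzero entry $A_{ij}$ in an eigenbasis of $D$ satisfies $d_j-d_i=\lambda$; since $\lambda\ne0$, the map $A$ strictly shifts the (finitely many) $D$-eigenvalues in one direction, forcing $A$ to be nilpotent with $[A,[A,A^t]]=\lambda A$. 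By Proposition \ref{solmuA} these are exactly the two cases in which $\mu_A$ is an algebraic soliton.

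Finally I would transfer the conclusion from $t_0$ to the initial time. By Proposition \ref{solmuA}, if $\mu_{A(t_0)}$ is an algebraic soliton the bracket flow through $A(t_0)$ is a pure rescaling, so by uniqueness of solutions to \eqref{a'} our solution satisfies $A(t)=a(t)A(t_0)$ on $(T_-,\infty)$ with $a$ nowhere vanishing; in particular $A_0=A(0)$ is a nonzero scalar multiple of $A(t_0)$. As normality and the nilpotent condition $[A,[A,A^t]]=cA$ are both scale invariant, $\mu_{A_0}$, equivalently $\mu_{B(0)}$, is an algebraic soliton. The converse is immediate: if $\mu_{B(0)}$ is an algebraic soliton then $B(t)$ is constant by the same rescaling, so $F(B)$ is constant and its derivative vanishes for every $t_0$. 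I expect the main obstacle to be the equality analysis, namely verifying that the Cauchy--Schwarz extremals $[A,[A,A^t]]=\lambda A$ split cleanly into the normal ($\lambda=0$) and special-nilpotent ($\lambda\ne0$) cases through the trace and eigenvalue-shifting arguments, so that they match precisely the algebraic soliton characterization of Proposition \ref{solmuA}.
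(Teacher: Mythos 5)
Your proposal is correct and is essentially the paper's proof: the same monotonicity formula $\tfrac{d}{dt}F(B)=\tfrac{2}{\|A\|^6}\left(\|[A,A^t]\|^4-\|A\|^2\|[A,[A,A^t]]\|^2\right)$, the same Cauchy--Schwarz step via the identity $\|[A,A^t]\|^2=-\la A,[A,[A,A^t]]\ra$, the same split of the equality case $[A,[A,A^t]]=\lambda A$ into normal ($\lambda=0$) and special nilpotent ($\lambda\neq 0$), and the same appeal to Proposition \ref{solmuA} together with propagation of the soliton condition to $t=0$. Your only deviations are cosmetic --- Euler's relation in place of the paper's quotient-rule computation, the eigenvalue-shifting argument for nilpotency in place of the paper's trace identity $c\tr(B^{k+1})=\tr([B^k,B][B,B^t])=0$, and ray-invariance plus ODE uniqueness in place of observing that $B(t_0)$ is a fixed point of the normalized equation \eqref{a'norm} --- apart from a harmless sign slip: $[A,D]=-\lambda A$ should read $[A,D]=\lambda A$, which is what your condition $d_j-d_i=\lambda$ actually uses.
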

\begin{proof}
We consider $F: \g \rightarrow \RR,$
$$
F(C)=\| [C,C^t]\|^2.
$$ Then
$$
\tfrac{d}{dt}F \left(\tfrac{A}{\|A\|}\right) = \tfrac{d}{dt}
\tfrac{\|[A,A^t]\|^2}{\|A\|^4}= \tfrac{1}{\|A\|^8} (\|A\|^4
\tfrac{d}{dt} \|[A,A^t]\|^2 - \|[A,A^t]\|^2 \tfrac{d}{dt}\|A\|^4).
$$ By using the bilinearity of the inner product and the lie bracket we obtain that
$$
\tfrac{d}{dt} \|[A,A^t]\|^2=-4\tr(S(A)^2) \|[A,A^t]\|^2 - 2 \|[A,[A,A^t]]\|^2,
$$ and from (\ref{normA})
$$
\tfrac{d}{dt}\|A\|^4 = 2 \|A\|^2 \tfrac{d}{dt}\|A\|^2 = -4\tr(S(A)^{2})\|A\|^{4} - 2 \|A\|^2 \|[A,A^{t}]\|^{2}.
$$ Then, if we consider $B = \tfrac{A}{\|A\|},$ it follows that
\begin{equation}\label{F'}
\tfrac{d}{dt}F \left(\tfrac{A}{\|A\|}\right) = 2 \|A\|^2
(\|[B,B^t]\|^4- \|B\|^2 \|[B,[B,B^{t}]]\|^{2}) \leq 0,
\end{equation} by using the Cauchy-Schwarz inequality. Moreover, if there exists $t_0 \in \RR$ such that $\tfrac{d}{dt} |_{t=t_0}F \left(\tfrac{A}{\|A\|}\right)=0,$ then the Cauchy-Schwarz equality holds and there exists $c \in \RR$ such that
\begin{equation}\label{cse}
[B(t_0),[B(t_0),B(t_0)^t]] = c B(t_0).
\end{equation}
We have two cases:
\begin{itemize}
\item If $c=0,$ then $[B(t_0),[B(t_0),B(t_0)^t]]=0,$ and so $\tr([B(t_0),[B(t_0),B(t_0)^t]]B(t_0)^t)=0$, this implies that $\|[B(t_0),B(t_0)^t]\|^2=0$, i.e. $B(t_0)$ is normal and $\mu_{B(t_0)}$ is an algebraic soliton (see Proposition \ref{solmuA}). On the other hand, by using (\ref{a'}) and
(\ref{normA}), it is easy to see that
\begin{equation}\label{a'norm}
\begin{array}{rcl}
\tfrac{d}{dt} B &=& \tfrac{d}{dt} \tfrac{A}{\|A\|} = \tfrac{1}{2
\|A\|} \left( [A,[A,A^t]] - \tr(A) [A,A^t] +
\tfrac{\|[A,A^t]\|^2}{\|A\|^2}A \right)\\ &=& \tfrac{\|A\|^2}{2}
\left( [B,[B,B^t]] - \tr(B) [B,B^t] + \|[B,B^t]\|^2B \right),
\end{array}
\end{equation} so, $B(t) = B(t_0),$ for all $t,$ since $B(t_0)$ is a fixed point of (\ref{a'norm}). It follows that $\mu_{B(t)}=\mu_{B(t_0)}$ for all $t.$
\item If $c \neq 0,$ then by using (\ref{cse}), we obtain that $\tr(B(t_0))=0$ and $\tr(B(t_0)^k)=0,$ since
$$
\begin{array}{rcl}
c\tr(B(t_0)^{k+1})&=&\tr([B(t_0),[B(t_0),B(t_0)^t]]B(t_0)^k)\\
&=&\tr([B(t_0)^k,B(t_0)][B(t_0),B(t_0)^t])\\
&=&0.
\end{array}
$$ Therefore, $B(t_0)$ is a nilpotent matrix that satisfies (\ref{cse}), so by Proposition \ref{solmuA} we have that $\mu_{B(t_0)}$ is an algebraic soliton. In addition, $B(t_0)$ is a fixed point of (\ref{a'norm}), so, $\mu_{B(t)}=\mu_{B(t_0)}$ for all $t.$
\end{itemize}
Conversely, if $\mu_{B(0)}$ is an algebraic soliton, then by using
(\ref{F'}), we have that $\tfrac{d}{dt}F
\left(\tfrac{A}{\|A\|}\right)=0.$
\end{proof}
\begin{corollary}\label{notafunc}
Let $\mu_{A(t)}$ be the bracket flow starting at $\mu_{A_0}$ and set
$B(t)=\tfrac{A(t)}{\|A(t)\|}.$ Then for any sequence $t_k \rightarrow \infty$ there exists a subsequence of
$(G_{\mu_{B(t_k)}}, g_{\mu_{B(t_k)}})$ converging in the pointed
topology to an algebraic soliton
$(G_{\mu_{B_{\infty}}},g_{\mu_{B_{\infty}}}).$
\end{corollary}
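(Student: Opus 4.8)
The plan is to combine the compactness of the unit sphere in $\g$ with the monotonicity of $F$ from Lemma \ref{func}, treating $F$ as a Lyapunov function for the normalized bracket flow. First I would observe that, assuming $A_0\neq 0$ (so that $B$ is defined), $B(t)=A(t)/\|A(t)\|$ always has unit norm and is defined for all $t$: from \eqref{normA} one has $\tfrac{d}{dt}\|A\|^2\geq -6\|A\|^4$ (bounding $\tr(S(A)^2)\leq\|A\|^2$ and $\|[A,A^t]\|^2\leq 4\|A\|^4$), hence $\tfrac{d}{dt}\|A\|^{-2}\leq 6$ and $\|A(t)\|^2\geq(\|A_0\|^{-2}+6t)^{-1}>0$. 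Therefore $\{B(t_k)\}$ lies in the compact unit sphere $S\subset\g$, and after passing to a subsequence we may assume $B(t_{k_j})\to B_\infty$ with $\|B_\infty\|=1$. Convergence in $\g$ forces $\mu_{B(t_{k_j})}\to\mu_{B_\infty}$ in $\mathfrak{L}_{n+1}$, and by the continuity of the correspondence $\mu\mapsto(G_\mu,g_\mu)$ with respect to the pointed (Cheeger--Gromov) topology, a standard fact in this framework (see \cite{riccifl}), the manifolds $(G_{\mu_{B(t_{k_j})}},g_{\mu_{B(t_{k_j})}})$ converge in the pointed topology to $(G_{\mu_{B_\infty}},g_{\mu_{B_\infty}})$. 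It remains to prove that $\mu_{B_\infty}$ is an algebraic soliton.

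For this, I would rewrite \eqref{a'norm} as $\tfrac{d}{dt}B=\tfrac{\|A\|^2}{2}X(B)$ with the autonomous field $X(B)=[B,[B,B^t]]-\tr(B)[B,B^t]+\|[B,B^t]\|^2B$, which is tangent to $S$, and reparametrize time by $d\tau=\tfrac{\|A\|^2}{2}\,dt$. The decay estimate above gives $\int_0^\infty\|A\|^2\,dt=\infty$, so $\tau\to\infty$ as $t\to\infty$; in the new time $B$ solves the autonomous system $\tfrac{dB}{d\tau}=X(B)$ on the compact manifold $S$. From \eqref{F'} one computes $\tfrac{dF}{d\tau}=-4\Phi(B)$, where $\Phi(B)=\|[B,[B,B^t]]\|^2-\|[B,B^t]\|^4\geq 0$ by Cauchy--Schwarz. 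Since $F\geq 0$ is non-increasing, it converges to some $L$, and LaSalle's invariance principle applies: the $\omega$-limit set of the $\tau$-trajectory is a nonempty, compact, flow-invariant set on which $F\equiv L$, hence on which $\tfrac{dF}{d\tau}=-4\Phi\equiv 0$. Because $\tau(t_{k_j})\to\infty$ and $B(\tau(t_{k_j}))\to B_\infty$, the point $B_\infty$ lies in this $\omega$-limit set, so $\Phi(B_\infty)=0$. This is exactly the Cauchy--Schwarz equality $[B_\infty,[B_\infty,B_\infty^t]]=cB_\infty$ of \eqref{cse}, and the two-case analysis in the proof of Lemma \ref{func} (via Proposition \ref{solmuA}) then shows that $\mu_{B_\infty}$ is an algebraic soliton.

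The step I expect to be the main obstacle is passing the monotonicity of $F$ to the limiting equality $\Phi(B_\infty)=0$; monotonicity alone does not locate the limit at a critical configuration. The clean resolution is precisely the time reparametrization together with the a priori decay $\|A(t)\|^2\gtrsim(1+t)^{-1}$, which guarantees $\tau\to\infty$ and converts the normalized flow into an autonomous system on a compact manifold, so that LaSalle's invariance principle is available and identifies the $\omega$-limit with $\{\Phi=0\}$. A secondary technical point is the justification of the pointed convergence, for which I would rely on the established continuity of $\mu\mapsto(G_\mu,g_\mu)$ in the pointed topology.
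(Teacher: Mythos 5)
Your proof is correct, and it follows the same skeleton as the paper's argument---compactness of the unit sphere together with the monotone functional $F$ of Lemma \ref{func}---but it supplies a substantive step that the paper leaves implicit. The paper's entire proof reads: pass to a convergent subsequence $B(t_k)\rightarrow B_{\infty}$; then ``$\mu_{B_{\infty}}$ is an algebraic soliton by Lemma \ref{func}, as $B_{\infty}$ is a fixed point of the flow.'' Lemma \ref{func} alone does not deliver this: the normalized equation (\ref{a'norm}) is non-autonomous (the factor $\tfrac{\|A(t)\|^2}{2}$ depends on $t$), and monotonicity of $F$ by itself does not place a subsequential limit at a critical configuration---one must rule out, for instance, $\|A(t)\|^2$ decaying so fast that the reparametrized time is finite and the normalized flow ``freezes'' before reaching $\{\Phi=0\}$. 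Your a priori bound $\|A(t)\|^2\geq(\|A_0\|^{-2}+6t)^{-1}$ (the estimates $\tr(S(A)^2)\leq\|A\|^2$ and $\|[A,A^t]\|^2\leq 4\|A\|^4$ in (\ref{normA}) are correct), hence $\int_0^{\infty}\|A\|^2\,dt=\infty$, the passage to the autonomous field $X$ tangent to the sphere, and LaSalle's invariance principle give exactly the missing justification that $\Phi(B_{\infty})=0$; the equality case (\ref{cse}) together with the two-case analysis of Lemma \ref{func} and Proposition \ref{solmuA} then finishes as in the paper. Two minor remarks: the paper could alternatively have invoked \cite[Proposition 4.1]{homRS} (limits of normalized bracket flows are algebraic solitons), as it in fact does later in the proof of Lemma \ref{cociente}; and your appeal to plain continuity of $\mu\mapsto(G_{\mu},g_{\mu})$ for the pointed convergence should strictly be routed through \cite[Corollary 6.20]{conv}, which yields convergence only up to local isometry and after passing to a further subsequence---a looseness that the corollary's own statement shares. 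Finally, note that LaSalle gives containment of the $\omega$-limit set in $\{\Phi=0\}$, not equality as your closing sentence suggests, but containment is all your argument uses.
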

\begin{proof}
Every sequence $B(t_k)$ has a convergent subsequence, i.e. after
passing to a subsequence, $B(t_k)$ converges to a matrix
$B_{\infty}.$ Then $\mu_{B_{\infty}}$ is an algebraic soliton by
Lemma \ref{func}, as $B_{\infty}$ is a fixed point of the flow.
\end{proof}
From now on, our purpose is to study the ODE (\ref{a'}). We emphasize that our aim is not to solve the ODE, we are interested in understanding the qualitative behavior of the solution along the time, which is not trivial to predict even when $n$ is very small. In the next lemma we study how it evolves.
\begin{lemma}\label{ffc}
The bracket flow $\mu_{A(t)}$ starting at $\mu_{A_0}$ has the form
\begin{equation}\label{fc}
A(t)=a(t)\varphi_t A_0 \varphi^{-1}_t,
\end{equation}
where $a(t)$ is a positive, non-increasing, real valued function, and $\varphi_t \in \G$ for each $t.$
\end{lemma}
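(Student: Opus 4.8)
The plan is to recognize the right-hand side of \eqref{a'} as a scaling term plus a commutator, and then to extract the factorization \eqref{fc} from the uniqueness theorem for linear ODEs. As already observed in the derivation of \eqref{a'} (the computation of $\delta_{\mu_A}(\ricci_{\mu_A})(e_0,e_i)$), the right-hand side of \eqref{a'} can be written as $-\tr(S(A)^2)A+[A,\ricci_{\mu_A}|_{\RR^n}]$, where $\ricci_{\mu_A}|_{\RR^n}=\tfrac12[A,A^t]-\tr(A)S(A)$ is the lower block of the Ricci operator in \eqref{ricmu_A}. Hence, along a solution $A(t)$, equation \eqref{a'} reads
\[
\tfrac{d}{dt}A=\lambda(t)\,A+[X(t),A],\qquad \lambda(t):=-\tr(S(A(t))^2),\quad X(t):=-\ricci_{\mu_{A(t)}}|_{\RR^n}.
\]
This is precisely the infinitesimal form of ``scale and conjugate,'' and it is exactly what one should expect from Theorem \ref{rel}(4), which asserts that the bracket flow stays inside the $\G$-orbit of $\mu_{A_0}$.

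Next I would freeze the solution $A(t)$ and treat $\lambda(t)$ and $X(t)$ as known continuous functions of $t$. I would then define $\varphi_t\in\G$ as the fundamental solution of the linear system $\tfrac{d}{dt}\varphi_t=X(t)\varphi_t$, $\varphi_0=I$; by Liouville's formula $\det\varphi_t=\exp\!\big(\int_0^t\tr X(s)\,ds\big)\neq 0$, so indeed $\varphi_t\in\G$ for every $t$. Setting $a(t):=\exp\!\big(\int_0^t\lambda(s)\,ds\big)$ and $\tilde A(t):=a(t)\,\varphi_t A_0\varphi_t^{-1}$, a direct differentiation using $\tfrac{d}{dt}\varphi_t^{-1}=-\varphi_t^{-1}X(t)$ gives $\tfrac{d}{dt}\tilde A=\lambda(t)\tilde A+[X(t),\tilde A]$, with $\tilde A(0)=A_0$.

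The crux, and the point I expect to be the main subtlety, is the following uniqueness argument: once the solution $A(t)$ is fixed the coefficients $\lambda(t),X(t)$ are determined, so $Y\mapsto\lambda(t)Y+[X(t),Y]$ is a \emph{linear} time-dependent vector field on $\g$. Both $A(t)$ and $\tilde A(t)$ solve this same linear ODE with the same initial value $A_0$, whence $A(t)=\tilde A(t)=a(t)\,\varphi_t A_0\varphi_t^{-1}$, which is \eqref{fc}. The only thing one must be careful about is that the genuinely nonlinear equation \eqref{a'} is being captured by a \emph{linear} flow; this is legitimate precisely because we substitute the already-existing solution into the coefficients rather than attempting to solve a coupled system. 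Finally, since $\tr(S(A)^2)=\|S(A)\|^2\geq 0$, we have $\lambda(t)\leq 0$, so $a(t)=\exp\!\big(-\int_0^t\tr(S(A)^2)\,ds\big)$ is positive and non-increasing, as claimed, consistently with the monotonicity of $\|A\|^2$ in \eqref{normA}.
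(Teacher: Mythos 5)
Your proposal is correct, and in fact every object in it coincides with the paper's: your $X(t)=-\ricci_{\mu_{A(t)}}|_{\RR^n}$ is exactly the coefficient in the paper's ODE for $\varphi_t$, and your $a(t)=\exp\bigl(-\int_0^t\tr(S(A(s))^2)\,ds\bigr)$ is the paper's $1/b(t)$, where $b'=\tr(S(A)^2)b$, $b(0)=1$. The difference is in the justification. The paper does not argue from scratch: it invokes Theorem \ref{rel}, observing that since $\ricci_{\mu_A}$ is block diagonal by \eqref{ricmu_A}, the linear ODE (2) of that theorem, $\tfrac{d}{dt}h=-\ricci_{\mu(t)}h$, preserves the block form $h(t)=\left(\begin{smallmatrix} b(t) & 0\\ 0 & \varphi_t\end{smallmatrix}\right)$, and then reads off \eqref{fc} from the conjugation formula (4), which gives $\mu_{A(t)}=h(t).\mu_{A_0}=\mu_{\frac{1}{b(t)}\varphi_t A_0\varphi_t^{-1}}$. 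You instead rewrite \eqref{a'} along the fixed solution as the time-dependent \emph{linear} equation $Y'=\lambda(t)Y+[X(t),Y]$ and conclude by uniqueness for linear ODEs that $A(t)=a(t)\varphi_t A_0\varphi_t^{-1}$; your decomposition of the right-hand side is consistent with the paper's own computation of $\delta_{\mu_A}(\ricci_{\mu_A})$, and the ``freeze the coefficients'' step is legitimate for exactly the reason you flag: the coefficients are built from the already-existing solution, so no coupled nonlinear system needs to be solved. In effect you reprove the special case of Theorem \ref{rel}(4) needed here. What your route buys is self-containedness (Lemma \ref{ffc} becomes independent of the machinery of \cite{riccifl}, requiring only \eqref{a'} and the block form of the Ricci operator); what the paper's route buys is the geometric identification of $\varphi_t$ as a block of the derivative $h(t)$ of the equivariant diffeomorphisms relating the Ricci flow and the bracket flow, in line with how the correspondence is used throughout. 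Your monotonicity argument for $a(t)$, via $\lambda(t)=-\tr(S(A)^2)=-\|S(A)\|^2\le 0$, matches the paper's observation that $b$ is positive and non-decreasing.
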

\begin{proof}
If $h(t)= \left(
             \begin{array}{cc}
               b(t) & 0 \\
               0 & \varphi_t \\
             \end{array}
           \right) \in \Gl_{n+1}(\RR)
$, with $b(t)$ a real function and $\varphi_t \in \G$, then
$$
-\ricci_{\mu_{A(t)}}h(t)=-\left(
                     \begin{array}{cc}
                       -\tr(S(A(t))^{2})b(t) & 0 \\
                       0 & (\frac{1}{2}[A(t),A(t)^{t}]-\tr(A(t))S(A(t)))\varphi_t \\
                     \end{array}
                   \right).
$$ The map $h$ given in part (2) of Theorem \ref{rel} has therefore the form $$h(t)= \left(
             \begin{array}{cc}
               b(t) & 0 \\
               0 & \varphi_t \\
             \end{array}
           \right),
$$ and it follows from (4) of the same theorem that $$\mu_{A(t)}=h(t). \mu_{A_0}=\mu_{\frac{1}{b(t)}\varphi_t A_0 \varphi_t^{-1}}.$$ In addition, 
$$
\left\{
  \begin{array}{ll}
    b'(t)= \tr(S(A(t))^{2})b(t),\\
    b(0)=1,
  \end{array}
\right.
$$ so, we have that $b$ is a positive, non-decreasing function. It follows that if $a(t)=\tfrac{1}{b(t)},$ then $a(t)$ is a positive, non-increasing function.
\end{proof}
In what follows, $\mu_{A(t)},$ $A(t) \in \g,$ will be the bracket
flow solution starting at $\mu_{A_0}$ and we will denote it simply by $A(t).$
\begin{proposition}\label{spe}
Assume that $A(t_k) \rightarrow A_{\infty},$ for some sequence $t_k \rightarrow \infty.$ Then $\Spec(A_{\infty})$ $=$
$a_{\infty}\Spec(A_0),$ for some $a_{\infty} \in \RR.$ Here $\Spec(B)$ denotes the unordered set of $n$ complex eigenvalues of the matrix $B \in \g.$
\end{proposition}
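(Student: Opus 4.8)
The plan is to read off the spectrum directly from the structural form (\ref{fc}) of the bracket flow supplied by Lemma \ref{ffc}, combined with two elementary facts: the spectrum is invariant under conjugation and homogeneous of degree one under scaling, and it depends continuously on the matrix.

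First I would write $A(t) = a(t)\,\varphi_t A_0 \varphi_t^{-1}$ with $a(t) > 0$ and $\varphi_t \in \G$, as in (\ref{fc}). Since $\varphi_t A_0 \varphi_t^{-1}$ is conjugate to $A_0$ it has the same eigenvalues, and multiplying a matrix by the scalar $a(t)$ multiplies each eigenvalue by $a(t)$; hence
$$\Spec(A(t)) = a(t)\,\Spec(A_0), \quad \mbox{for all } t.$$

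Next I would use that $a(t)$ is positive and non-increasing (Lemma \ref{ffc}), so it is bounded below by $0$ and converges to a limit $a_{\infty} := \lim_{t \to \infty} a(t) \in [0,\infty) \subset \RR$; in particular $a(t_k) \to a_{\infty}$ along the given sequence. Finally, since the coefficients of the characteristic polynomial are polynomial in the matrix entries and the roots of a monic polynomial vary continuously with its coefficients, the unordered set $\Spec(\cdot)$ is a continuous function on $\g$. Passing to the limit in the displayed identity along $t_k$ then gives, using $A(t_k) \to A_{\infty}$ on the left and $a(t_k) \to a_{\infty}$ on the right,
$$\Spec(A_{\infty}) = \lim_{k} \Spec(A(t_k)) = \lim_{k} a(t_k)\,\Spec(A_0) = a_{\infty}\,\Spec(A_0),$$
which is the claim.

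The argument is essentially immediate once (\ref{fc}) is available, so there is no serious obstacle; the only point deserving a word of care is making precise the continuity of the spectrum as a map into the space of unordered $n$-tuples $\CC^n/S_n$, but this is entirely standard.
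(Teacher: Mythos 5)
Your proof is correct and follows the same route as the paper: apply Lemma \ref{ffc} to get $\Spec(A(t)) = a(t)\Spec(A_0)$ via conjugation invariance and scaling, use that $a(t)$ is positive and non-increasing to extract $a_\infty = \lim a(t_k)$, and pass to the limit using continuity of the spectrum. Your explicit remark on the continuity of $\Spec(\cdot)$ as a map into unordered tuples is a point the paper leaves implicit, but the argument is otherwise identical.
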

\begin{proof}
We know that $A(t)=a(t)\varphi_t A_0 \varphi_t^{-1}$ by Lemma \ref{ffc}, therefore
$$\Spec(A(t_k)) = a(t_k)\Spec(\varphi_{t_k} A_0 \varphi_{t_k}^{-1})=
a(t_k)\Spec(A_0), \quad \forall t_k \in (T_-,\infty).$$ Then, as $A(t_k) \rightarrow A_{\infty},$ we have that
$$\Spec(A_{\infty}) = a_{\infty}\Spec(A_0),$$ where $a_{\infty}= \lim_{k \rightarrow \infty} a(t_k)$ (recall that from Lemma \ref{ffc}, $a(t)$ is a positive, non-increasing function).
\end{proof}
\begin{proposition}\label{sa}
$\tr(S(A(t))^{2})$ is strictly decreasing if $A_0$ is not skew-symmetric.
Moreover, $\tr(S(A(t))^{2})\rightarrow 0,$ as $t \rightarrow
\infty$.
\end{proposition}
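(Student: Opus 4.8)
The plan is to derive an explicit evolution equation for $f(t):=\tr(S(A(t))^2)$ and read off both claims from it. Note first that, since $S(A)$ is symmetric, $f=\tr(S(A)^2)=\|S(A)\|^2\geq 0$, and $f=0$ exactly when $A$ is skew-symmetric. Because symmetric and skew-symmetric matrices are orthogonal for $\langle X,Y\rangle=\tr(X^tY)$, one has $\tfrac{d}{dt}\tr(S(A)^2)=2\big\langle S(A),\tfrac{d}{dt}A\big\rangle$, so it suffices to pair $S(A)$ with each of the three terms on the right-hand side of (\ref{a'}).

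Writing $M=[A,A^t]$, which is symmetric, I would use three facts. First, $\langle S(A),A\rangle=\|S(A)\|^2=f$. Second, $\langle S(A),[A,M]\rangle=-\tfrac12\|M\|^2$, which follows from the identity $\langle A,[A,M]\rangle=-\|M\|^2$ already used in the proof of Proposition \ref{a} together with $\langle A^t,[A,M]\rangle=\tr(A[A,M])=\tr(A^2M)-\tr(AMA)=0$ (by cyclicity of the trace). Third, $\langle S(A),M\rangle=\langle A,[A,A^t]\rangle=0$, again from Proposition \ref{a}, which kills the $\tr(A)$-term entirely. Collecting these yields the clean formula
\[
\tfrac{d}{dt}\tr(S(A)^2)=-2\big(\tr(S(A)^2)\big)^2-\tfrac12\|[A,A^t]\|^2\leq 0 .
\]

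This single identity drives both statements. The right-hand side vanishes only when $\tr(S(A)^2)=0$ and $[A,A^t]=0$ simultaneously, i.e. only when $A$ is skew-symmetric (the second condition being automatic once $S(A)=0$). If $A_0$ is not skew-symmetric, then $A(t)$ is never skew-symmetric: by Remark \ref{flat} the skew-symmetric matrices are exactly the fixed points of (\ref{a'}), so if $A(t_0)$ were skew-symmetric for some $t_0$, uniqueness of solutions of the ODE (\ref{a'}) would force $A\equiv A(t_0)$, contradicting that $A_0$ is not skew-symmetric. Hence $\tfrac{d}{dt}f<0$ for every $t$, and $f$ is strictly decreasing.

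For the limit, recall that by Proposition \ref{a} the solution is defined on all of $[0,\infty)$, and the evolution equation shows $f\geq 0$ is non-increasing there, so $L:=\lim_{t\to\infty}f(t)$ exists with $L\geq 0$. If $L>0$, then $f(t)\geq L$ for all $t$, whence $\tfrac{d}{dt}f\leq -2f^2\leq -2L^2<0$ and $f(t)\leq f(0)-2L^2 t\to-\infty$, contradicting $f\geq 0$; therefore $L=0$. The main obstacle is the derivative computation itself: recognizing that $[A,A^t]$ is symmetric and exploiting the orthogonality of symmetric and skew-symmetric parts so that the bracket identities of Proposition \ref{a} can be reused to evaluate the three pairings. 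Once the formula $\tfrac{d}{dt}f=-2f^2-\tfrac12\|[A,A^t]\|^2$ is in hand, both conclusions are immediate.
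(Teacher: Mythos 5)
Your proof is correct, and its backbone coincides with the paper's: both arguments rest on the identity
\[
\tfrac{d}{dt}\,\tr(S(A)^{2})=-2\tr(S(A)^{2})^{2}-\tfrac{1}{2}\|[A,A^{t}]\|^{2}\leq 0,
\]
both obtain strictness from the fact (Remark \ref{flat}) that the skew-symmetric matrices are exactly the fixed points of (\ref{a'}) together with uniqueness of ODE solutions, and both extract the limit from the consequence $f'\leq -2f^{2}$, where $f=\tr(S(A)^{2})$. You diverge in two minor but genuine ways. To derive the identity, the paper splits $\tr(S(A)^{2})=\tfrac{1}{2}\|A\|^{2}+\tfrac{1}{2}\tr(A^{2})$ as in (\ref{trsa}) and combines the known equation (\ref{normA}) with a separate computation (\ref{tra2}) of $\tfrac{d}{dt}\tr(A^{2})$, whereas you pair $S(A)$ directly against the three terms of (\ref{a'}), exploiting the orthogonality of symmetric and skew-symmetric matrices; your three pairings are all correct, and your route has the incidental benefit of confirming that the last equality in (\ref{tra2}) is missing a factor of $2$ (one needs $\tfrac{d}{dt}\tr(A^{2})=-2\tr(S(A)^{2})\tr(A^{2})$ for the paper's displayed formula for $\tfrac{d}{dt}\tr(S(A)^{2})$ to follow, consistent with Lemma \ref{derivadas}(ii)). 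For the limit, the paper dominates $f$ by the explicit solution $x(t)=\bigl(2t+(\tr(S(A_{0})^{2}))^{-1}\bigr)^{-1}$ of $x'=-2x^{2}$, which yields the quantitative $O(1/t)$ decay that is reused verbatim in the Type-III estimate of Proposition \ref{type3}; your soft contradiction argument (if $L>0$ then $f'\leq -2L^{2}$, forcing $f\to-\infty$) proves exactly the stated qualitative limit but surrenders the rate. For the proposition as stated, everything you wrote is complete and valid.
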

\begin{proof}
Recall that $S(A)= \frac{1}{2}(A+A^{t}),$ and so
\begin{equation}\label{trsa}
\tr(S(A)^{2})=\tfrac{1}{2}\|A\| ^{2}+\tfrac{1}{2}\tr(A^{2}).
\end{equation}
Then, as in Proposition \ref{a} we have already studied ${\tfrac{d}{dt}} \|A\|
^{2},$ we will only analyze ${\tfrac{d}{dt}} \tr(A^{2}).$ By using (\ref{a'}), we obtain
\begin{equation}\label{tra2}
{\tfrac{d}{dt}} \tr(A^{2})={\tfrac{d}{dt}}\langle A,A^{t}\rangle=
\langle {\tfrac{d}{dt}} A,A^{t}\rangle + \langle A,
{\tfrac{d}{dt}} A^{t}\rangle = 2 \langle {\tfrac{d}{dt}} A,A^{t}\rangle =-\tr(S(A)^{2})\tr(A^{2}).
\end{equation} Therefore, it follows from (\ref{normA}) and (\ref{tra2}) that
$$
{\tfrac{d}{dt}} \tr(S(A)^{2})=-2\tr(S(A)^{2})^{2}-\tfrac{1}{2}\|[A,A^{t}]\|^{2} \leq 0,
$$ and if there exists $t_0$ such that $\tfrac{d}{dt} |_{t = t_0} \tr(S(A)^{2}) =
0,$ then $A(t_0)$ is a skew-symmetric and so $A(t) = A(t_0),$ for
all $t.$ Conversely if $A_0$ is skew-symmetric, we have that
$\tfrac{d}{dt} \tr(S(A)^{2}) = 0.$ So, $\tr(S(A)^{2})$ is
strictly decreasing if $A_0$ is not skew-symmetric.

In addition,
$$
{\tfrac{d}{dt}} \tr(S(A)^{2}) \leq -2\tr(S(A)^{2})^{2},
$$
And then $\tr(S(A)^{2})$ is dominated by
$$
x(t)= {\tfrac{1}{2t+({\tr(S(A(0))^{2})})^{-1}}},
$$ which is a solution of ${\tfrac{d}{dt}} x=-2x^{2}.$ Therefore $\tr(S(A(t))^{2}) \rightarrow 0,$ as $t
\rightarrow \infty.$
\end{proof}
Recall that if $G_{\mu_A}$ is the simply connected solvable Lie group with Lie algebra $(\RR^{n+1},\mu_A),$ then $g_{\mu_A}$ denotes the left-invariant Riemannian metric on $G_{\mu_A}$
such that $g_{\mu_A}(e)=\langle \cdot, \cdot \rangle$, where $e$
is the identity of the group $G_{\mu_A}$ and $\langle \cdot, \cdot
\rangle$ is the canonical inner product on $\RR^{n+1}$.
\begin{corollary}\label{subconv}
If $A(t) \rightarrow A_{\infty},$ as $t \rightarrow \infty,$ then $A_{\infty}$ is a skew-symmetric matrix and for any sequence $t_k \rightarrow \infty$ there exists a subsequence of $(G_{\mu_{A(t_k)}}, g_{\mu_{A(t_k)}})$ which converges in the pointed topology to a
manifold locally isometric to $(G_{\mu_{A_{\infty}}},
g_{\mu_{A_{\infty}}})$, which is flat.
\end{corollary}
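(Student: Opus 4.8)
The plan is to read off the limit's structure from the asymptotics established in Proposition \ref{sa}, identify it as a flat fixed point via Remark \ref{flat}, and then promote the convergence of brackets to pointed convergence of the associated Riemannian manifolds via the continuity of the correspondence $A \mapsto \mu_A \rightsquigarrow (G_{\mu_A},g_{\mu_A})$. All the algebraic work is essentially already done; the content of the statement is the final, geometric passage.

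First I would show that $A_\infty$ is skew-symmetric. The map $C \mapsto \tr(S(C)^2)$ is continuous on $\g$, so the hypothesis $A(t) \rightarrow A_\infty$ forces $\tr(S(A(t))^2) \rightarrow \tr(S(A_\infty)^2)$. On the other hand, Proposition \ref{sa} gives $\tr(S(A(t))^2) \rightarrow 0$ as $t \rightarrow \infty$, so $\tr(S(A_\infty)^2)=0$. Since $S(A_\infty)$ is symmetric we have $\tr(S(A_\infty)^2)=\|S(A_\infty)\|^2$, whence $S(A_\infty)=0$, i.e. $A_\infty$ is skew-symmetric. By Remark \ref{flat}, the skew-symmetric matrices are exactly those for which $\mu_A$ is Ricci-flat, hence flat, so $(G_{\mu_{A_\infty}}, g_{\mu_{A_\infty}})$ is flat. (Equivalently, one may note that $A_\infty$ must be a fixed point of \eqref{a'} and invoke Remark \ref{flat} directly.)

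For the convergence statement, the assignment $A \mapsto \mu_A$ is continuous, so from $A(t)\rightarrow A_\infty$ we get, for any sequence $t_k \rightarrow \infty$, that $A(t_k)\rightarrow A_\infty$ and therefore $\mu_{A(t_k)} \rightarrow \mu_{A_\infty}$ in $\mathcal{L}_{n+1}$. I would then invoke the continuity of the bracket-to-manifold correspondence used throughout this framework (cf. \cite{riccifl}, \cite{riccinil}): convergence $\mu_{A(t_k)} \rightarrow \mu_{A_\infty}$ in the variety of brackets implies that $(G_{\mu_{A(t_k)}}, g_{\mu_{A(t_k)}})$ subconverges, in the pointed topology, to a manifold locally isometric to $(G_{\mu_{A_\infty}}, g_{\mu_{A_\infty}})$. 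Combined with the previous paragraph this yields a subsequence converging, up to local isometry, to a flat manifold, as claimed.

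The main obstacle is precisely this last step: the passage from algebraic convergence of the brackets to geometric (pointed Cheeger--Gromov) convergence of the manifolds. Bracket convergence controls the left-invariant metrics on the fixed underlying space $\RR^{n+1}$, but the global isometry type of the limit need only match \emph{locally} — possible collapsing and changes in the global isometry group are exactly what force the ``up to local isometry'' qualification, and explain why one asserts subconvergence and a local isometry rather than an honest isometry with the model $(G_{\mu_{A_\infty}}, g_{\mu_{A_\infty}})$.
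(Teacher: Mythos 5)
Your proposal is correct and follows essentially the same route as the paper: the paper likewise deduces skew-symmetry of $A_\infty$ from Proposition \ref{sa} (your explicit computation via $\tr(S(A_\infty)^2)=\|S(A_\infty)\|^2$ just spells out the paper's ``$S(A(t))\rightarrow 0$ therefore $A_\infty$ is skew-symmetric''), concludes flatness from Remark \ref{flat}, and then passes from $\mu_{A(t_k)}\rightarrow\mu_{A_\infty}$ to pointed subconvergence up to local isometry by citing \cite[Corollary 6.20]{conv} --- which is exactly the bracket-to-manifold convergence result you invoke generically, and whose content (subconvergence, local rather than global isometry) you correctly identify as the one genuinely imported step.
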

\begin{proof}
By Proposition \ref{sa}, we know that $S(A(t)) \rightarrow 0,$
as $t\rightarrow \infty$, therefore $A_{\infty}$ is
skew-symmetric and then $(G_{\mu_{A_{\infty}}},
g_{\mu_{A_{\infty}}})$ is flat (see Remark \ref{flat}).

Finally, since $\mu_{A(t)} \rightarrow \mu_{A_{\infty}},$ by
\cite[Corollary 6.20]{conv}, for any sequence $t_k \rightarrow \infty$ there exists a subsequence of $(G_{\mu_{A(t_k)}},
g_{\mu_{A(t_k)}})$ which converges in the pointed topology to a
manifold locally isometric to $(G_{\mu_{A_{\infty}}},
g_{\mu_{A_{\infty}}})$, which is flat, as shown above.
\end{proof}
In the following proposition, we prove that under an additional hypothesis, the convergence is actually smooth.
\begin{proposition}\label{conva}
If $\Spec(A_0) \nsubseteq i \RR$ and $A(t) \rightarrow
A_{\infty},$ as $t \rightarrow \infty,$ then $g_{\mu_{A(t)}} \rightarrow
g_{\mu_{A_{\infty}}}$ smoothly on $\RR^{n+1}.$
\end{proposition}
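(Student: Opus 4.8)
The plan is to pin down the limit $A_\infty$ exactly before saying anything about metrics, because the whole role of the hypothesis $\Spec(A_0)\nsubseteq i\RR$ is to force $A_\infty=0$. Once we know $A(t)\to 0$, the statement reduces to the assertion that the left-invariant metrics $g_{\mu_{A(t)}}$ converge smoothly to the flat Euclidean metric, which can be checked by an explicit computation in a fixed global chart.

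First I would show that $A_\infty=0$. By Corollary \ref{subconv} the limit $A_\infty$ is skew-symmetric, hence $\Spec(A_\infty)\subseteq i\RR$. On the other hand, writing $A(t)=a(t)\,\varphi_t A_0\varphi_t^{-1}$ as in Lemma \ref{ffc}, with $a(t)>0$ positive and non-increasing, Proposition \ref{spe} gives $\Spec(A_\infty)=a_\infty\Spec(A_0)$, where $a_\infty=\lim_{t\to\infty}a(t)\ge 0$ exists by monotonicity. Since $\Spec(A_0)\nsubseteq i\RR$, there is an eigenvalue $\lambda$ of $A_0$ with $\Re\lambda\neq 0$; because $a_\infty\lambda\in\Spec(A_\infty)\subseteq i\RR$ we get $a_\infty\,\Re\lambda=0$, and therefore $a_\infty=0$. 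Hence $\Spec(A_\infty)=\{0\}$, so $A_\infty$ is nilpotent; being also skew-symmetric, and thus normal and diagonalizable over $\CC$, it must vanish, $A_\infty=0$. In particular $\mu_{A_\infty}=0$ is the abelian bracket and $g_{\mu_{A_\infty}}$ is the flat metric on $\RR^{n+1}$.

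It then remains to prove that $A(t)\to 0$ forces smooth convergence of the metrics, and for this I would use an explicit global chart rather than the pointed-convergence machinery. Each $G_{\mu_A}$ is the simply connected solvable group $\RR\ltimes_A\RR^n$ with product $(s_1,v_1)(s_2,v_2)=(s_1+s_2,\,v_1+e^{s_1A}v_2)$, and $(s,v)\mapsto(s,v)$ identifies it diffeomorphically with $\RR^{n+1}$. Computing the differential of left translation one finds $dL_{(s,v)}^{-1}=\mathrm{diag}(1,e^{-sA})$, so in these coordinates the left-invariant metric $g_{\mu_A}$ is represented by
\[
\begin{pmatrix} 1 & 0 \\ 0 & e^{-sA^t}e^{-sA}\end{pmatrix},
\]
whose entries are real-analytic functions of $s$, depend on $A$ only through the matrix exponential, and are independent of $v$. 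Since $A(t)\to 0$, the map $s\mapsto e^{-sA(t)}$ and all of its $s$-derivatives converge to the corresponding derivatives of $e^{0}=I$, uniformly for $s$ in compact intervals. Hence the metric coefficients above converge to those of $ds^2+\langle dv,dv\rangle$ in $C^\infty$ uniformly on compact subsets of $\RR^{n+1}$, which is precisely the assertion $g_{\mu_{A(t)}}\to g_{\mu_{A_\infty}}$ smoothly on $\RR^{n+1}$.

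The genuinely substantive step is the first one: the spectral identity of Proposition \ref{spe}, combined with the skew-symmetry of the limit, collapses $a_\infty$ to $0$ and pins $A_\infty=0$, thereby upgrading the merely ``locally isometric to flat'' conclusion of Corollary \ref{subconv} to convergence toward the honest abelian (Euclidean) structure. Once the limit is identified as the abelian bracket, the smoothness of the convergence is essentially automatic, since the global chart is canonical and the metric depends smoothly on $A$ via $e^{-sA}$; the only point requiring care is that the convergence of $e^{-sA(t)}$ be uniform together with all derivatives on compact $s$-intervals, which is immediate because matrix exponentials and their derivatives depend continuously on the matrix. (One could instead invoke the convergence results of \cite{conv}, but the direct computation is cleaner precisely because $A_\infty=0$.)
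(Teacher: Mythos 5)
Your proposal is correct, but it takes a genuinely different route from the paper's. The paper never pins down $A_{\infty}$: it keeps the dichotomy ``$\Spec(A_{\infty})\nsubseteq i\RR$ or $A_{\infty}=0$'' coming from Proposition \ref{spe}, constructs the global chart $\psi(r,x)=\exp_{\mu_A}(re_0)\exp_{\mu_A}(x)$ via the faithful affine matrix representation $\varphi$, checks that $\psi$ is a diffeomorphism in either case of the dichotomy, and then delegates the actual smooth-convergence assertion to \cite[Remark 6.11]{conv}. You instead close the dichotomy before touching metrics: combining Proposition \ref{spe} with the skew-symmetry of the limit (Corollary \ref{subconv}, which rests on Proposition \ref{sa}) forces $a_{\infty}\,\Re\lambda=0$ for some eigenvalue with $\Re\lambda\neq0$, hence $a_{\infty}=0$, $\Spec(A_{\infty})=\{0\}$, and a nilpotent skew-symmetric (hence normal, diagonalizable) matrix vanishes — a correct sharpening that the paper leaves implicit. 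Your second step then replaces the citation of \cite{conv} by a self-contained computation: in the canonical global coordinates of $G_{\mu_A}\cong\RR\ltimes_{e^{sA}}\RR^{n}$ the left-invariant metric is $\mathrm{diag}(1,\,e^{-sA^{t}}e^{-sA})$, which depends real-analytically on $(s,A)$ and is independent of $v$, so $A(t)\to0$ yields $C^{\infty}$ convergence to the Euclidean metric uniformly on compact sets; this computation is correct (the differential of left translation and the resulting coefficient matrix check out). The one point deserving care is that ``smoothly on $\RR^{n+1}$'' presupposes a choice of identification $G_{\mu_A}\cong\RR^{n+1}$: you use the semidirect-product chart while the paper uses $\psi$; but since $\psi(r,x)=(r,e^{rA}x)$ in your coordinates, both chart families depend smoothly on $A$ and coincide at $A=0$, so the two formulations of the conclusion agree here. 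As for what each approach buys: the paper's argument sits inside the general convergence framework of \cite{conv}, with a chart adapted uniformly to the whole flow (where $\Spec(A(t))\nsubseteq i\RR$ by Proposition \ref{spe}); yours is more elementary and self-contained, identifies the limit exactly as the abelian bracket, and in fact shows that in these canonical coordinates smooth convergence of the metric coefficients holds whenever $A(t)$ converges, the spectral hypothesis entering only through the identification of the limit.
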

\begin{proof}
For each $\mu_A,$ we define $\psi: \RR \oplus \RR^{n} \rightarrow
G_{\mu_A}$ by
\begin{equation}\label{exp}
\psi(r,x)=\exp_{\mu_A}(re_0)\exp_{\mu_A}(x), \quad r\in \RR, \quad
x\in \RR ^{n},
\end{equation} where $\exp_{\mu_A}: (\RR \oplus \RR^{n}, \mu_A)
\rightarrow G_{\mu_A}$ is the Lie exponential of $G_{\mu_A}.$

Let $\varphi: (\RR^{n+1},\mu_A) \rightarrow
(\glg_{n+1}(\RR),[\cdot, \cdot])$ be the linear transformation
such that $\varphi(e_0)=X_0$ and $\varphi(e_i)=X_i$,
$i=1,\ldots,n,$ where
$X_0=\left(%
\begin{array}{cc}
A & 0 \\
0 & 0 \\
\end{array}%
\right)$ and $X_i=\left(%
\begin{array}{cc}
0 & \tilde{e}_i^{t} \\
0 & 0 \\
\end{array}%
\right)$, $\tilde{e}_i =(0,\ldots,1,\dots, 0) \in \RR ^{n}$. Then
$\varphi$ is an isomorphism of Lie algebras.

Therefore, under the isomorphism $\varphi$, we have that
$$
\psi(r,x)=\exp(rX_0)\exp(x), \quad r\in \RR, \quad x\in \RR ^{n},
$$ where $\exp$ is the exponential function of matrices.

Then
$$
\psi(r,x)=\exp(rX_0)\exp(x)= \exp(rX_0)\exp(x_1X_1+\ldots+x_nX_n),
$$ but $\exp(rX_0)=\left(%
\begin{array}{cc}
  \exp(rA) & 0 \\
  0 & 1 \\
\end{array}%
\right)$ and $\exp(x)=\left(%
\begin{array}{cc}
  I & x \\
  0 & 1 \\
\end{array}%
\right)$, therefore
$$
\psi(r,x)=\left(%
\begin{array}{cc}
  \exp(rA) & \exp(rA)x \\
  0 & 1 \\
\end{array}%
\right).
$$
It is easy to see that if $\Spec(A)\nsubseteq i\RR$ or $A = 0,$ then $\psi$ is a diffeomorphism. So, as $\Spec(A_0) \nsubseteq i \RR,$
we have that $\Spec(A(t)) \nsubseteq i \RR$ and $\Spec(A_{\infty})
\nsubseteq i \RR$ or $A_{\infty}=0$ by Proposition \ref{spe}, and therefore we have
that $g_{\mu_{A(t)}} \rightarrow g_{\mu_{A_{\infty}}}$ smoothly on
$\RR^n$ (see \cite[Remark 6.11]{conv}).
\end{proof}
\begin{remark}
In particular, if $\mu_{A_0}$ is completely solvable ($\Spec(\ad_{\mu}x) \subseteq
\RR,$ for all $x$), then the convergence is smooth. This also follows by using Proposition
\ref{spe} and \cite[Corollary 6.20]{conv}, since $\mu_{A(t)}$ is
completely solvable for all $t.$
\end{remark}
Recall that if the norm of the Riemann tensor decays at least as fast as $\tfrac{C}{t},$ where $C$ is a constant, then the solution of the Ricci flow is a Type-III solution (see Definition \ref{type3}).
\begin{proposition}\label{type3}
For every $\mu_{A_0}$ with $\tr({A_0}^2) \geq 0$, the Ricci flow $g(t)$ with
$g(0)=g_{\mu_{A_0}}$ is a Type-III solution, for some constant $C_{n+1}$
that only depends on the dimension $n+1.$
\end{proposition}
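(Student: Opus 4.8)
The plan is to reduce the Type-III estimate to a decay estimate for $\|A(t)\|$. Immortality is already in hand: Proposition \ref{a} gives $T_+=\infty$ for the bracket flow, hence by Theorem \ref{rel} the Ricci flow $g(t)$ is defined on $[0,\infty)$, so only the curvature bound $\|\Riem(g(t))\|\le C_{n+1}/t$ remains. Since $g(t)=\varphi(t)^{*}g_{\mu_{A(t)}}$, the map $\varphi(t)$ is an isometry $(G,g(t))\to(G_{\mu_{A(t)}},g_{\mu_{A(t)}})$, and the pointwise norm of the Riemann tensor is an isometry invariant; as $g_{\mu_{A(t)}}$ is homogeneous, $\sup_M\|\Riem(g(t))\|=\|\Riem(g_{\mu_{A(t)}})\|$ evaluated at the identity. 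The Riemann tensor of a left-invariant metric is a fixed universal expression, homogeneous of degree two, in the structure constants, so there is a dimensional constant $c_{n+1}$ with $\|\Riem(g_{\mu_A})\|\le c_{n+1}\|\mu_A\|^2$; since $\|\mu_A\|^2$ is a fixed multiple of $\|A\|^2$, it suffices to prove a bound $\|A(t)\|^2\le C/t$ with $C$ universal.

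The decisive observation is that the hypothesis $\tr(A_0^2)\ge 0$ is preserved along the flow. Setting $y(t)=\tr(A(t)^2)$, equation (\ref{tra2}) reads $y'=-\tr(S(A)^2)\,y$, a linear ODE whose solution is $y(t)=y(0)\exp\!\big(-\int_0^t\tr(S(A(s))^2)\,ds\big)$; the exponential factor is positive, so $\operatorname{sign}(y)$ is constant and $\tr(A(t)^2)\ge 0$ for all $t$. Combining this with (\ref{trsa}), namely $\tr(S(A)^2)=\tfrac12\|A\|^2+\tfrac12\tr(A^2)$, yields at once $\|A(t)\|^2\le 2\,\tr(S(A(t))^2)$. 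Finally, Proposition \ref{sa} (precisely, the domination $\tr(S(A(t))^2)\le x(t)$ by the solution $x$ of $x'=-2x^2$ established there, together with $x(t)\le\tfrac{1}{2t}$) gives the universal decay $\tr(S(A(t))^2)\le\tfrac{1}{2t}$. Chaining the two estimates, $\|A(t)\|^2\le 2\,\tr(S(A(t))^2)\le\tfrac{1}{t}$, with a constant independent of $A_0$, and hence $\|\Riem(g(t))\|\le C_{n+1}/t$ for a purely dimensional $C_{n+1}$.

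I expect the only genuine subtlety to be the curvature bound $\|\Riem(g_{\mu_A})\|\le c_{n+1}\|\mu_A\|^2$, where one must verify that the constant depends on $n+1$ alone and not on $A$. This rests on the fact that the Levi-Civita connection, and therefore the full curvature tensor, of a left-invariant metric is given by universal polynomials in $\mu$ (via Koszul's formula), each monomial having total degree two in the structure constants, so the norm can be estimated term by term by a dimensional multiple of $\|\mu\|^2$. One degenerate case should be isolated first: if $A_0$ is skew-symmetric then $\tr(A_0^2)=-\|A_0\|^2$, so the hypothesis forces $A_0=0$, the metric is flat by Remark \ref{flat}, and the statement is trivial. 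Everything else is the routine bookkeeping of assembling the estimates above.
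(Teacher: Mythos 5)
Your proposal is correct and follows essentially the same route as the paper: immortality from Proposition \ref{a}, sign preservation of $\tr(A^2)$ via the linear ODE (\ref{tra2}), the bound $\|A\|^2\le 2\tr(S(A)^2)$ from (\ref{trsa}), and the decay $\tr(S(A)^2)\le \tfrac{1}{2t+(\tr(S(A_0)^2))^{-1}}$ from Proposition \ref{sa}, chained with the quadratic scaling of the curvature in the bracket. The only (cosmetic) difference is that you justify $\|\Riem(g_{\mu})\|\le c_{n+1}\|\mu\|^2$ by degree-two homogeneity of the curvature polynomial in the structure constants, while the paper takes $C$ to be the maximum of the continuous map $\mu\mapsto\|\Riem(\mu)\|$ on the unit sphere of $\mathfrak{L}_{n+1}$; your explicit isolation of the skew-symmetric degenerate case is a small extra precaution the paper leaves implicit.
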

\begin{proof}
In Proposition \ref{a}, we proved that $\mu_{A(t)}$ is defined for $t \in [0,\infty).$ We observe that, by using (\ref{tra2}), if
$\tr({A_0}^2) \geq 0$ then $\tr(A(t)^2) \geq 0$ for all $t.$
Further, in Proposition \ref{sa}, we prove that $\tr(S(A)^2)\leq
{\tfrac{1}{2t+({\tr(S(A(0))^{2})})^{-1}}},$ therefore, by using
(\ref{trsa}), we have that
$$
\|\Riem(\mu_A)\| = \|\mu_A\|^2 \|\Riem(\tfrac{\mu_A}{\|\mu_A\|})\| = 2 \|A\|^2 \|\Riem(\tfrac{\mu_A}{\|\mu_A\|})\| \leq {\tfrac{4 C}{2t+({\tr(S(A(0))^{2})})^{-1}}} \leq \tfrac{2C}{t},
$$
where $C$ is  the maximum of the continuous function $\mu
\rightarrow \|\Riem(\mu)\|$ restricted to the unit sphere of
$\mathfrak{L}_{n+1}.$
\end{proof}
The question that naturally arises is whether the flow converges. The following section is devoted to study such question.
\section{Limit points}\label{punlim}
In this section, we analyze the $\omega$-limit of the bracket flow
$\mu_{A(t)}$ (i.e. the set of limit points of sequences under the bracket flow). To do this, we consider two cases: when $\tr(A_0)=0$
( i.e., $\mu_{A_0}$ is unimodular) and when $\tr(A_0) \neq 0.$

Let us first suppose that $\tr(A_0)=0.$

We consider the functional $F(A)=\|[A,A^{t}]\|^{2},$ which is, in fact, the square norm of the moment map of the conjugation action of the real reductive group $\G$ on the vector space $\g,$ and we compute its gradient:
$$
\begin{array}{rcl}
\la grad(F)_A , B \ra &=& \tfrac{d}{dt}|_{t=0} F(A+tB)= \tfrac{d}{dt}|_{t=0} \|[A+tB,A^t+tB^{t}]\|^{2}\\
                      &=& \tfrac{d}{dt}|_{t=0} \la [A+tB,A^t+tB^{t}], [A+tB,A^t+tB^{t}]\ra\\
                      &=& 2 \la [A,A^t], \tfrac{d}{dt}|_{t=0} [A+tB,A^t+tB^{t}]\ra\\
                      &=& 2 \la [A,A^t], [B,A^t]+[A,B^t] \ra\\
                      &=& 4 \la [A,A^t], [B,A^t] \ra = -4 \la [A,[A,A^t]], B \ra.
\end{array}
$$ Thus, $grad(F)_A = -4 [A,[A,A^t]]$ and the negative gradient flow of $F$ is given by
\begin{equation}\label{abarra}
{\tfrac{d}{dt}}\bar{A}(t)= 4[\bar{A}(t),[\bar{A}(t),\bar{A}(t)^{t}]].
\end{equation}
Observe that $\|\bar A\|$ is a decreasing function. Indeed,
$$
{\tfrac{d}{dt}} \|\bar A\|^2= 2 \langle {\bar A}', \bar A \rangle = 8 \la [\bar{A},[\bar{A},\bar{A}^{t}]],\bar A  \ra = -8 \|[\bar A,\bar A^{t}]\|^{2},
$$ as $\langle[\bar A,[\bar A,\bar A^{t}]], \bar A \rangle =
-\|[\bar A,\bar A^{t}]\|^{2}.$ So, $\bar A (t)$ has a limit point $A_{\infty}^{1}$ and then we have that there exists the limit of $\bar A(t),$ as $t \rightarrow \infty$ and it is unique (see \cite[Introduction]{Krd}). In addition, if $\bar A(t)\rightarrow A_{\infty}^{1},$ we have two cases:
\begin{itemize}
\item If $A_{\infty}^{1} \neq 0,$ then $\lim_ {t\rightarrow \infty}\frac{\bar{A}(t)}{\|\bar{A}(t)\|}$ exists and $\lim_ {t\rightarrow \infty}\frac{\bar{A}(t)}{\|\bar{A}(t)\|}=\tfrac{A_{\infty}^{1}}{\|A_{\infty}^{1}\|}.$
\item If $A_{\infty}^{1} = 0,$ then by \cite[Theorem 7.1]{Krd}, $\lim_ {t\rightarrow \infty}\frac{\bar{A}(t)}{\|\bar{A}(t)\|}$ exists.
\end{itemize}
If $A_0$ is nilpotent, then $\mu_{A_0}$ turns out to be nilpotent and so
the bracket flow starting at $\mu_{A_0}$ has been studied in
\cite{riccinil} (see Theorem \ref{nilva}). Therefore, we assume that $A_0$ is not nilpotent.
\begin{lemma}\label{cociente}
Assume that $\tr(A_0)=0$ and $A_0$ is not nilpotent. Let $\mu_{A(t)}$ be the bracket
flow starting at $\mu_{A_0}$ and let $\bar A(t)$ be the negative gradient
flow (\ref{abarra}) starting at $A_0.$ Then the limit of $\tfrac{A(t)}{\|A(t)\|}$ exists and
$$\lim_ {t\rightarrow \infty}\frac{A(t)}{\|A(t)\|} = \lim_ {t\rightarrow \infty}\frac{\bar{A}(t)}{\|\bar{A}(t)\|}.$$
\end{lemma}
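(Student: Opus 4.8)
The plan is to show that, up to a scalar factor and a time reparametrization, the bracket flow $A(t)$ \emph{is} the negative gradient flow $\bar A(t)$. Once this is established, the asserted convergence of $\tfrac{A(t)}{\|A(t)\|}$ follows immediately from the convergence of $\tfrac{\bar A(t)}{\|\bar A(t)\|}$ proved in the discussion preceding the lemma. We may assume $A_0$ is not skew-symmetric, since otherwise $A_0$ is a fixed point of (\ref{a'}) by Remark \ref{flat}, the gradient flow (\ref{abarra}) is also stationary at $A_0$, and the statement is trivial.

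First, since $\tr[\,X,Y\,]=0$, both flows preserve the condition $\tr=0$: equation (\ref{a'}) gives $\tfrac{d}{dt}\tr(A)=-\tr(S(A)^2)\tr(A)$ and (\ref{abarra}) gives $\tfrac{d}{dt}\tr(\bar A)=0$, so $\tr(A(t))=\tr(\bar A(t))=0$ for all $t$ and (\ref{a'}) reduces to $\tfrac{d}{dt}A=-\tr(S(A)^2)A+\tfrac12[A,[A,A^t]]$. The core step is the ansatz $A(t)=r(t)\bar A(\tau(t))$, where $r,\tau$ solve the scalar system
$$\tau'=\tfrac{r^2}{8},\qquad r'=-r^3\,\tr(S(\bar A(\tau))^2),\qquad \tau(0)=0,\quad r(0)=1.$$
Using the homogeneities $[r\bar A,[r\bar A,r\bar A^t]]=r^3[\bar A,[\bar A,\bar A^t]]$ and $\tr(S(r\bar A)^2)=r^2\tr(S(\bar A)^2)$, I would check by direct substitution that $r(t)\bar A(\tau(t))$ solves the reduced bracket flow with initial value $A_0$. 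The gradient flow $\bar A$ exists for all $\tau\ge 0$ (its norm is decreasing), and the scalar system does not blow up, as $r\in(0,1]$ is decreasing and $\tau'\le\tfrac18$; hence by uniqueness of ODE solutions together with Proposition \ref{a}, this ansatz is the bracket flow on $[0,\infty)$. Since $r(t)>0$, this gives at once $\tfrac{A(t)}{\|A(t)\|}=\tfrac{\bar A(\tau(t))}{\|\bar A(\tau(t))\|}$.

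It remains to prove $\tau(t)\to\infty$, which is the only place non-nilpotency enters and where the real work lies. Because $\|\bar A(\tau)\|\le\|A_0\|$, we have $r(t)^2=\tfrac{\|A(t)\|^2}{\|\bar A(\tau(t))\|^2}\ge\tfrac{\|A(t)\|^2}{\|A_0\|^2}$, so it suffices to show $\int_0^\infty\|A(t)\|^2\,dt=\infty$. By Lemma \ref{ffc}, $\Spec(A(t))=a(t)\Spec(A_0)$ with $a(t)=\exp\!\big(-\int_0^t\tr(S(A)^2)\,ds\big)$, and since the Frobenius norm dominates the moduli of the eigenvalues, $\|A(t)\|^2\ge\sum_i|\lambda_i(A(t))|^2=a(t)^2\sum_i|\lambda_i(A_0)|^2$, where $\sum_i|\lambda_i(A_0)|^2>0$ precisely because $A_0$ is not nilpotent. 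Finally, the bound $\tr(S(A)^2)\le\tfrac{1}{2t+c}$ from Proposition \ref{sa}, with $c=\tr(S(A_0)^2)^{-1}$, yields $a(t)^2\ge\tfrac{c}{2t+c}$, whence $\int_0^\infty\|A\|^2\,dt\gtrsim\int_0^\infty\tfrac{dt}{2t+c}=\infty$. Therefore $\tau(t)\to\infty$, and combining with the existence of $\lim_{\tau\to\infty}\tfrac{\bar A(\tau)}{\|\bar A(\tau)\|}$ established before the lemma we conclude $\lim_{t\to\infty}\tfrac{A(t)}{\|A(t)\|}=\lim_{t\to\infty}\tfrac{\bar A(t)}{\|\bar A(t)\|}$.

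The main obstacle is exactly this divergence $\int_0^\infty\|A\|^2\,dt=\infty$, equivalently $\tau(t)\to\infty$: a priori the bracket flow could slow down and converge to a direction lying strictly before the limit of the gradient flow, and ruling this out requires the quantitative spectral lower bound on $\|A(t)\|$ coming from non-nilpotency, together with the $O(1/t)$ decay of $\tr(S(A)^2)$.
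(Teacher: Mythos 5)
Your proof is correct, and while its first half coincides with the paper's (the same ansatz $A(t)=c(t)\bar A(\tau(t))$ with exactly the same scalar system $c'=-\tr(S(\bar A(\tau))^2)c^3$, $\tau'=\tfrac18 c^2$), you resolve the key difficulty by a genuinely different route. The paper never proves $\tau(t)\to\infty$: instead it argues by cases, and when $\tau(t)\to L<\infty$ it observes that $\tfrac{\bar A(L)}{\|\bar A(L)\|}$ is then the limit of the normalized bracket flow, hence an algebraic soliton by \cite[Proposition 4.1]{homRS}; since $\bar A(L)$ is conjugate to $A_0$ it is not nilpotent, so Proposition \ref{solmuA} forces it to be normal, hence a fixed point of (\ref{abarra}), so $\bar A(t)=\bar A(L)$ for $t\geq L$ and the two normalized limits coincide anyway. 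You instead rule this case out altogether with a quantitative chain: $r(t)^2\geq \|A(t)\|^2/\|A_0\|^2$, the Schur inequality $\|A(t)\|^2\geq a(t)^2\sum_i|\lambda_i(A_0)|^2$ (where non-nilpotency gives $\sum_i|\lambda_i(A_0)|^2>0$, paralleling where the paper invokes non-nilpotency), and $a(t)^2\geq c/(2t+c)$ from Proposition \ref{sa}, so $\tau(t)\gtrsim \log t\to\infty$; each of these estimates checks out (including $a(t)=\exp(-\int_0^t\tr(S(A)^2))$ from Lemma \ref{ffc}, and your reduction WLOG to $A_0$ not skew-symmetric, which is needed for $c=\tr(S(A_0)^2)^{-1}$ to be finite). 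Your approach buys self-containedness and extra information --- it avoids the external soliton-convergence result from \cite{homRS}, shows the paper's finite-$L$ alternative is in fact vacuous, and yields the decay lower bound $\|A(t)\|^2\gtrsim t^{-1}$ for unimodular non-nilpotent initial data --- while the paper's soft argument is shorter at this step and sidesteps all quantitative estimates.
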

\begin{proof}
We prove that, up to scaling and reparameterization of the time,
the bracket flow $A(t)$ starting at $A_0$ is $\bar{A}(t),$
the solution of (\ref{abarra}) starting at $A_0,$ i.e. we want to
show that there exist $c(t)$ and $\tau(t)$ such that
$A(t)=c(t)\bar{A}(\tau(t)).$

Let $c(t)$ and $\tau(t)$ be solutions of the following system of
differential equations with initial conditions:
$$
\begin{array}{ll}
c'(t)=-\tr(S(\bar{A}(\tau(t))^2))c(t)^3, & c(0)=1,\\
\tau'(t)= \tfrac{1}{8} c(t)^{2}, &
\tau(0)=0.
\end{array}
$$ It is easy to see that $c(t)$ and $\tau(t)$ are defined for all
$t,$ and with a simple calculation it is easy to verify that
$c(t)\bar{A}(\tau(t))$ is a solution of the equation (\ref{a'}),
therefore by uniqueness
$$A(t)=c(t)\bar{A}(\tau(t)), \quad \forall t \in [0,\infty).$$
If $\tau(t)\rightarrow \infty$ then
$$\lim_ {t\rightarrow \infty} \frac{A(t)}{\|A(t)\|}=\lim_ {t\rightarrow \infty} \frac{\bar{A}(\tau(t))}{\|\bar{A}(\tau(t))\|}=\lim_ {t\rightarrow \infty} \frac{\bar{A}(t)}{\|\bar{A}(t)\|}.$$ We suppose that $\tau(t)\rightarrow L$, $L<\infty,$ as
$t\rightarrow\infty$, then
$$\lim_ {t\rightarrow \infty} \frac{A(t)}{\|A(t)\|}=\lim_ {t\rightarrow \infty} \frac{\bar{A}(\tau(t))}{\|\bar{A}(\tau(t))\|}=\frac{\bar{A}(L)}{\|\bar{A}(L)\|}.$$ This implies that $\tfrac{\bar{A}(L)}{\|\bar{A}(L)\|}$ is an algebraic soliton, since it is the limit of a normalized bracket flow (see \cite[Proposition 4.1]{homRS}). As, $A_0$ is not nilpotent and $\bar{A}(t)$ is conjugated to $A_0,$ for each $t,$ we have then $\tfrac{\bar{A}(L)}{\|\bar{A}(L)\|}$ is normal (see Proposition \ref{solmuA}), i.e. $\bar{A}(L)$ is
normal. So, $\bar A (t)= \bar A (L),$ for all $t \geq L,$ by (\ref{abarra}).

Therefore,
$$\lim_ {t\rightarrow \infty} \frac{A(t)}{\|A(t)\|}=\lim_ {t\rightarrow \infty} \frac{\bar{A}(\tau(t))}{\|\bar{A}(\tau(t))\|}=\lim_ {t\rightarrow \infty} \frac{\bar{A}(t)}{\|\bar{A}(t)\|},$$ as was to be shown.
\end{proof}
\begin{remark}\label{nochaos}
It follows from Lemma \ref{cociente} and \cite[Section 7]{riccinil} that if $\mu_{A_0}$ is unimodular, i.e. $\tr(A_0) = 0,$ then the $\omega$-limit of $\frac{A(t)}{\|A(t)\|}$ is a single point.
\end{remark}
\begin{lemma}\label{trneq0}
If $\tr(A_0) \neq 0,$ then $A(t) \rightarrow 0,$ as $t\rightarrow
\infty.$
\end{lemma}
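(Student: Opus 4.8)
The plan is to prove the lemma directly, by showing that $\|A(t)\| \to 0$; since the flow remains in the closed ball of radius $\|A_0\|$ (the trajectory is precompact in $\g$), this is equivalent to $A(t) \to 0$. By \eqref{normA} the function $\|A(t)\|^2$ is non-increasing and bounded below, hence converges to some $L \geq 0$. It therefore suffices to rule out $L > 0$, and I would do this by contradiction: assuming $L > 0$, I extract from an arbitrary sequence $t_k \to \infty$ a convergent subsequence $A(t_k) \to A_{\infty}$ with $\|A_{\infty}\|^2 = L > 0$, and then derive two incompatible descriptions of the limit $A_{\infty}$.

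The first description comes from Proposition \ref{sa}. Since $\tr(S(A(t))^2) \to 0$ and $\tr(S(A)^2) = \|S(A)\|^2$ for the symmetric matrix $S(A)$, continuity along the subsequence forces $S(A_{\infty}) = 0$, i.e. $A_{\infty}$ is skew-symmetric; in particular $\tr(A_{\infty}) = 0$ and $A_{\infty}$ is normal. The second description comes from Proposition \ref{spe}, which applies to the convergent subsequence and gives $\Spec(A_{\infty}) = a_{\infty} \Spec(A_0)$ for some $a_{\infty} \in \RR$. Summing the eigenvalues yields $0 = \tr(A_{\infty}) = a_{\infty}\tr(A_0)$, and this is exactly where the hypothesis enters: because $\tr(A_0) \neq 0$ we must have $a_{\infty} = 0$, so $\Spec(A_{\infty}) = \{0,\dots,0\}$. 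Finally, for a skew-symmetric matrix $\|A_{\infty}\|^2 = -\tr(A_{\infty}^2) = -\sum_{\lambda \in \Spec(A_{\infty})} \lambda^2 = 0$, contradicting $L > 0$. Hence $L = 0$, every subsequential limit is $0$, and $A(t) \to 0$.

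The substantive content is entirely the clash between $A_{\infty}$ being forced to be skew-symmetric (hence traceless) while its spectrum is pinned to a real multiple of $\Spec(A_0)$; the nonvanishing of $\tr(A_0)$ kills the scaling factor and collapses the whole spectrum, which for a skew-symmetric matrix forces $A_{\infty} = 0$. This is precisely the mechanism that \emph{fails} in the unimodular case $\tr(A_0) = 0$ (Lemma \ref{cociente} and Remark \ref{nochaos}), where the normalized flow may converge to a nonzero skew-symmetric limit, so I expect no obstacle beyond correctly invoking Propositions \ref{sa} and \ref{spe}. A variant that avoids Proposition \ref{spe} is also available: since $\tr$ annihilates commutators, \eqref{a'} gives $\tfrac{d}{dt}\tr(A) = -\tr(S(A)^2)\tr(A)$, so $\tr(A(t)) = \tr(A_0)\exp(-\int_0^t \tr(S(A)^2)\,ds)$ is sign-preserving; if $L > 0$ then \eqref{normA} forces $\int_0^{\infty} \tr(S(A)^2)\,ds < \infty$, whence $\tr(A(t))$ tends to a nonzero limit, contradicting $\tr(A(t_k)) \to \tr(A_{\infty}) = 0$ for the skew-symmetric limit $A_{\infty}$.
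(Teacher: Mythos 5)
Your proof is correct and follows essentially the same route as the paper: both arguments use Proposition \ref{sa} to force any subsequential limit $A_{\infty}$ to be skew-symmetric (hence traceless), then exploit the structure $A(t)=a(t)\varphi_t A_0\varphi_t^{-1}$ of Lemma \ref{ffc} --- which you invoke via its spectral consequence, Proposition \ref{spe} --- so that $\tr(A_0)\neq 0$ kills the scaling factor, collapses $\Spec(A_{\infty})$ to zero, and forces $A_{\infty}=0$ by skew-symmetry. Your contradiction framing with $L=\lim\|A(t)\|^2$ and the alternative via $\tfrac{d}{dt}\tr(A)=-\tr(S(A)^2)\tr(A)$ are only cosmetic variations on the paper's direct argument.
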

\begin{proof}
We know that $A(t)=a(t) \varphi _t A_0 \varphi_t^{-1}$ by Lemma \ref{ffc}, therefore
$$
\tr(A(t))=a(t)\tr(A_0).
$$ If $A(t_k) \rightarrow A_{\infty}$, then
$$
a(t_k)\tr(A_0)=\tr(A(t_k)) \rightarrow \tr(A_{\infty})=0,
$$ so, as $\tr(A_0) \neq 0,$ we have that $a(t_k) \rightarrow 0.$

On the other hand,
$$
\Spec(A(t_k))=a(t_k)\Spec(A_0)\rightarrow \Spec(A_{\infty}),
$$ and so $\Spec(A_{\infty})=0.$ Then $A_{\infty}=0,$ since $A_{\infty}$ is a skew-symmetric matrix.
\end{proof}
By using the two previous lemmas, we can prove the following theorem, which provides information about the
$\omega$-limit of $\mu_{A_0},$ for any $A_0 \in \g.$
\begin{theorem}
The $\omega$-limit of $\mu_{A_0}$ is a single point, for any $A_0 \in \g.$
\end{theorem}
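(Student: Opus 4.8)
The plan is to identify the bracket flow $\mu_{A(t)}$ with the matrix flow $A(t)$, as was done right after the first lemma, so that the $\omega$-limit of $\mu_{A_0}$ is a single point precisely when the (relatively compact) trajectory $A(t)$ converges. The argument is then a case analysis on $\tr(A_0)$ and the nilpotency of $A_0$, in each case reducing to results already proved. The single observation I would use throughout is that $\|A(t)\|$ is monotone: by \eqref{normA} the function $\|A(t)\|^2$ is non-increasing and bounded below by $0$, so $\|A(t)\| \to L$ for some $L \geq 0$ as $t \to \infty$.

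First I would dispose of the non-unimodular case $\tr(A_0) \neq 0$, where Lemma \ref{trneq0} gives directly that $A(t) \to 0$, so the $\omega$-limit is the single point $0$ (the abelian bracket). Next, assume $\tr(A_0) = 0$. If $A_0$ is nilpotent, then $\mu_{A_0}$ is a nilpotent Lie bracket, so Theorem \ref{nilva}(iii) applies and yields $\mu_{A(t)} \to 0$, again a single limit point. The remaining and main case is $\tr(A_0) = 0$ with $A_0$ not nilpotent. Here Lemma \ref{cociente} guarantees that the normalized flow $B(t) = \frac{A(t)}{\|A(t)\|}$ converges to a fixed matrix $B_{\infty}$. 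Combining this with the convergence $\|A(t)\| \to L$ established above, I would write $A(t) = \|A(t)\|\, B(t)$ and conclude: if $L > 0$ then $A(t) \to L B_{\infty}$ by continuity of scalar multiplication, while if $L = 0$ then $A(t) \to 0$ since $B(t)$ remains on the unit sphere. In either situation $A(t)$ has a unique limit, so the $\omega$-limit of $\mu_{A_0}$ is a single point.

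The entire conceptual content is thus carried by the earlier results; the only step to be checked by hand is the elementary passage from convergence of the direction $B(t)$ together with convergence of the magnitude $\|A(t)\|$ to convergence of $A(t)$ itself. I therefore expect the real difficulty to lie not in this theorem but in the machinery feeding it, namely Lemma \ref{cociente} (and Remark \ref{nochaos}), whose proof rests on the uniqueness of the limit of the negative gradient flow of the moment-map norm $F(A) = \|[A,A^t]\|^2$, i.e. the \L ojasiewicz-type result cited from \cite{Krd}. Granting that lemma, the present statement follows from the case analysis above.
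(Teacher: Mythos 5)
Your proposal is correct and follows essentially the same route as the paper: dispose of $\tr(A_0)\neq 0$ via Lemma \ref{trneq0}, then in the unimodular case combine the convergence of the normalized flow $B(t)$ (Lemma \ref{cociente}/Remark \ref{nochaos}) with the monotone limit $\|A(t)\|\to\alpha$, splitting into $\alpha=0$ and $\alpha>0$ exactly as the paper does. Your only deviation --- handling the nilpotent sub-case directly through Theorem \ref{nilva}(iii) rather than through Remark \ref{nochaos} --- is immaterial, since that remark itself defers the nilpotent case to \cite{riccinil}.
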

\begin{proof}
By Lemma \ref{trneq0}, we have that if $\tr(A_0) \neq 0,$ then
$A(t)\rightarrow 0,$ as $t \rightarrow \infty.$ If $\tr(A_0)=0$, we know by Remark \ref{nochaos} that
$$
\lim_ {t\rightarrow \infty} \frac{A(t)}{\|A(t)\|}= A_{\infty}^2.
$$
Then, we have that $A(t)\rightarrow A_{\infty},$ as $t\rightarrow
\infty.$ Indeed, the norm of $A(t)$ decreases and therefore $\lim_ {t\rightarrow \infty} \|A(t)\|=\alpha.$ If $\alpha=0,$ then $A(t) \rightarrow 0$ and if $\alpha>0,$ we have that
$$
\lim_ {t\rightarrow \infty} A(t) = \lim_ {t\rightarrow \infty} \alpha \frac{A(t)}{\|A(t)\|} = \alpha A_{\infty}^2,
$$ which completes the proof.
\end{proof}
All results obtained so far can be summarized in the following theorem.
\begin{theorem}\label{summ}
Given $A_0 \in \glg_n(\RR),$ consider the bracket flow
$\mu_{A(t)}$ starting at $\mu_{A_0}$ and $g(t)$ the Ricci flow starting at
$g_{\mu_{A_0}}$. Then,
\begin{itemize}
    \item[(i)] $g(t)$ is defined for $t \in (T_-,\infty),$ where $-\infty < T_-< 0.$
    \item[(ii)] The $\omega$-limit of $\mu_{A_0}$ is a single point.
    \item[(iii)] For any sequence $t_k \rightarrow \infty,$ there exists a subsequence of $(G_{\mu_{A(t_k)}}, g_{\mu_{A(t_k)}})$ which converges in the pointed topology to a manifold locally isometric to $(G_{\mu_{A_{\infty}}}, g_{\mu_{A_{\infty}}})$, which is flat.
    \item[(iv)] If $\Spec(A_0) \nsubseteq i \RR,$ then $g_{\mu_{A(t)}} \rightarrow g_{\mu_{A_{\infty}}}$ smoothly on $\RR^n.$
    \item[(v)] If $\tr({A_0}^2) \geq 0,$ the Ricci flow $g(t)$ with $g(0)=g_{\mu_{A_0}}$ is a Type-III solution, for some constant $C_{n+1}$ that only depends on the dimension $n+1.$
\end{itemize}
\end{theorem}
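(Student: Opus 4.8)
The theorem merely reorganizes the results established above into a single statement, so the plan is to assemble the earlier propositions in the correct logical order, taking care that the conditional hypotheses appearing in some of them are discharged first by the unconditional convergence coming from the $\omega$-limit analysis.

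First I would settle part (i), the maximal interval. The lemma deriving \eqref{a'} already shows that the bracket flow solution $A(t)$ is defined on a maximal interval $(T_-,T_+)$ with $-\infty\le T_-<0<T_+\le\infty$. Proposition \ref{a} upgrades this to $T_+=\infty$, while the non-positive scalar curvature observation recorded just before Proposition \ref{a} forces $-\infty<T_-$; hence $-\infty<T_-<0$ and the bracket flow lives on $(T_-,\infty)$. By Theorem \ref{rel} the Ricci flow $g(t)$ and the bracket flow $\mu_{A(t)}$ are defined on the same time interval, so $g(t)$ is defined precisely for $t\in(T_-,\infty)$, which is (i).

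Next, part (ii) is exactly the content of the preceding theorem on the $\omega$-limit, so nothing new is needed there. The key point, and the only real bookkeeping obstacle, is that (ii) does more than assert a single limit point: since $\|A(t)\|$ is non-increasing by \eqref{normA}, the trajectory is bounded, hence precompact, and a single $\omega$-limit point therefore forces the full limit $A(t)\to A_\infty$ to exist. This convergence is precisely the hypothesis required to invoke the conditional results, so it is (ii) that makes Corollary \ref{subconv} and Proposition \ref{conva} applicable without any extra assumption.

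With $A(t)\to A_\infty$ in hand, part (iii) is immediate from Corollary \ref{subconv}, which simultaneously identifies $A_\infty$ as skew-symmetric and produces, for every sequence $t_k\to\infty$, a pointed-subsequential limit locally isometric to the flat manifold $(G_{\mu_{A_\infty}},g_{\mu_{A_\infty}})$. Part (iv) follows in the same way from Proposition \ref{conva}: under the extra hypothesis $\Spec(A_0)\nsubseteq i\RR$, the convergence $A(t)\to A_\infty$ from (ii) yields smooth convergence of the metrics $g_{\mu_{A(t)}}\to g_{\mu_{A_\infty}}$. Finally, part (v) is Proposition \ref{type3} verbatim, valid whenever $\tr(A_0^2)\ge 0$. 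Since each clause is a direct consequence of an already-proved result, the remaining task is purely expository, and I do not expect any genuine analytic difficulty beyond the precompactness-plus-uniqueness argument for convergence noted above.
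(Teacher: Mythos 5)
Your proposal is correct and matches the paper's intent exactly: Theorem \ref{summ} is offered there as a pure summary, with (i) coming from the lemma establishing \eqref{a'} together with Proposition \ref{a}, the scalar-curvature remark, and Theorem \ref{rel}; (ii) from the preceding $\omega$-limit theorem (whose proof in fact establishes $A(t)\to A_\infty$ directly, so your precompactness-plus-unique-limit-point argument is a valid but unnecessary gloss); and (iii)--(v) from Corollary \ref{subconv}, Proposition \ref{conva}, and Proposition \ref{type3} respectively. Your observation that the convergence from (ii) is what discharges the conditional hypotheses of Corollary \ref{subconv} and Proposition \ref{conva} is exactly the bookkeeping the paper leaves implicit.
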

\begin{example}\label{asecir}
Let $A_0 = \left(
       \begin{array}{cc}
         0 & x_0 \\
         y_0 & 0 \\
       \end{array}
     \right).
$ It is easy to see that the family of matrices of this kind is invariant under the flow (\ref{a'}), which is equivalent to the following ODE system for the variables $x(t),$ $y(t):$
\begin{equation}\label{bfasecir}
\left\{\begin{array}{l}
x'=x(x+y)(-\tfrac{3}{2}x+\tfrac{1}{2}y), \quad x(0)=x_0,\\
y'=y(x+y)(-\tfrac{3}{2}y+\tfrac{1}{2}x), \quad y(0)=y_0.
\end{array}\right.
\end{equation}
\begin{figure}[h]\label{fcxy} 
\includegraphics[width=200pt]{./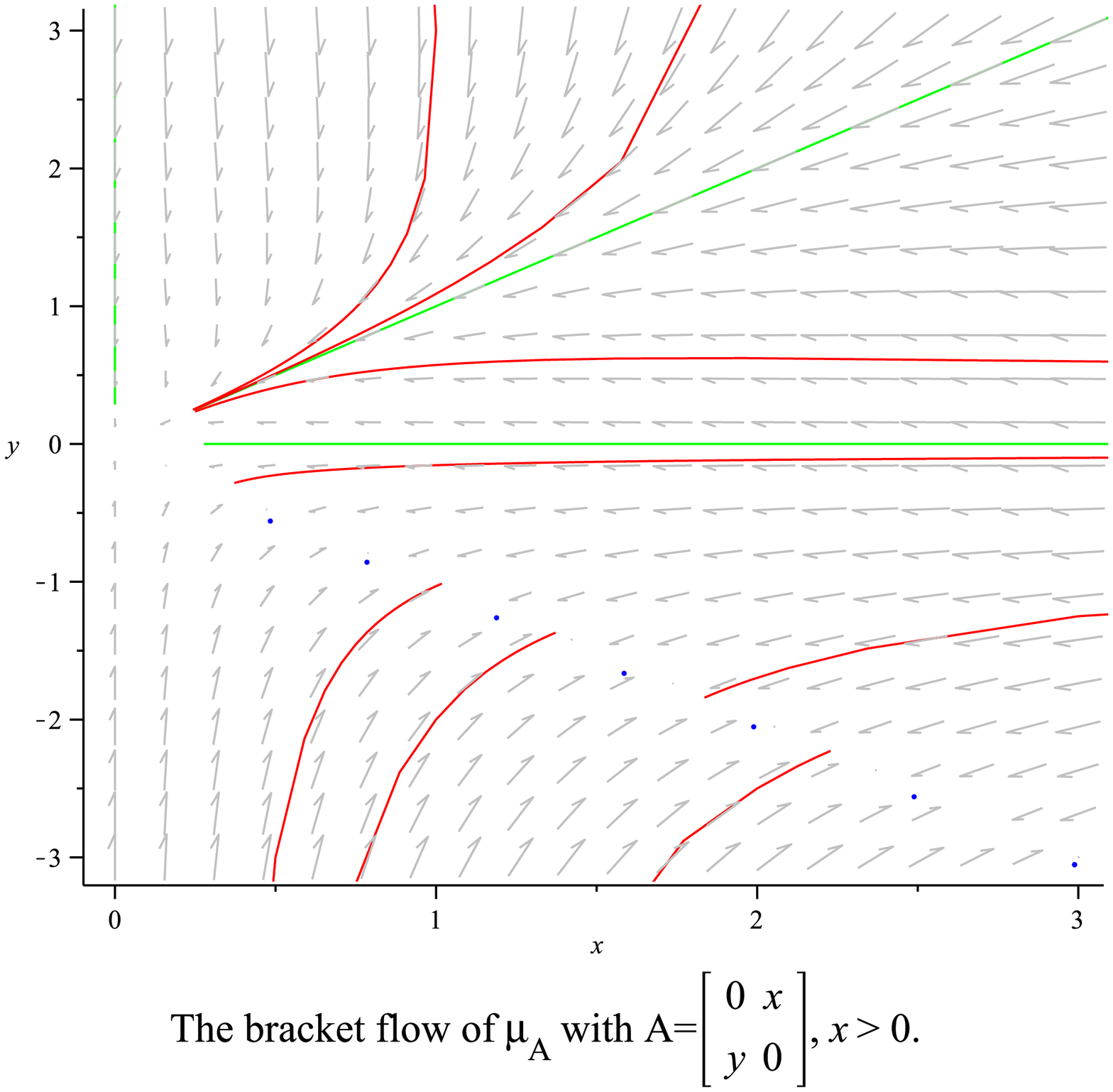}
\caption{}\label{Figure 1}
\end{figure}
The phase plane for this system is displayed in Figure \ref{Figure 1}, as computed in Maple.
It is easy to see that it is enough to assume $0 \leq x_0,$ since if $(x,y)$ is the solution starting at $(x_0,y_0),$ then $(-x,-y)$ is the solution starting at $(-x_0,-y_0).$

Regarding the interval of definition, the solutions remain in a compact subset and so they are defined in $[0,\infty).$

The solutions converge to the points $(x_{\infty},-x_{\infty}),$ which are precisely the fixed points of the system and correspond to skew-symmetric matrices (which in turn correspond to flat metrics). Also, we observe that points of the form $(x_0,x_0),$ $(x_0,0)$ and $(0,y_0)$ correspond to algebraic solitons (they are symmetric or special nilpotent matrices). Despite the fact that the solutions in the upper half-plane converge to $0,$ we can see from the figure that they are approaching the soliton line $y=x$, so considering a suitable normalization we may be able to obtain convergence of those solutions to a non-flat algebraic soliton. This will be the topic of the next section.
\end{example}
\section{Normalizing by the bracket norm}\label{nor}
According to Theorem \ref{summ} (iii), for any sequence $t_k \rightarrow \infty$ there exists a subsequence in which the Ricci flow converge in the pointed topology to a flat manifold. In order to avoid this
type of convergence and get a more interesting limit, we consider
different normalizations of the flow. In this section, we study
the normalized bracket flow by the bracket norm, i.e. if $\mu_{A(t)}$ is the
bracket flow starting at $\mu_{A_0},$ we will study
$\tfrac{A(t)}{\|A(t)\|}.$ We use the positive, non-increasing
function obtained in Section \ref{nue} to determine which limits correspond to
flat manifolds. Before stating the theorem of convergence, we demonstrate the following technical lemma. From now on, let $B(t)
= \tfrac{A(t)}{\|A(t)\|}.$
\begin{lemma}\label{derivadas}
The following evolution equations along the normalized flow by the bracket norm hold:
\begin{itemize}
\item[(i)] $\tfrac{d}{dt}\tr(B)=\tfrac{1}{2}\|A\|^2\|[B,B^t]\|^2 \tr(B),$
\item[(ii)] $\tfrac{d}{dt}\tr(B^2)=\|A\|^2\|[B,B^t]\|^2\tr(B^2).$
\end{itemize}
\end{lemma}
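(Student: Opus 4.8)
The plan is to differentiate the traces directly, feeding in the normalized evolution equation \eqref{a'norm} already established in the proof of Lemma \ref{func}, namely
$$\tfrac{d}{dt} B = \tfrac{\|A\|^2}{2}\left([B,[B,B^t]] - \tr(B)[B,B^t] + \|[B,B^t]\|^2 B\right),$$
and then to collapse the right-hand side using two elementary trace identities. The key observation is that both parts follow from the same mechanism: after taking the appropriate trace, the two genuinely bracket-valued terms vanish identically, and only the scalar multiple of $B$ (resp. of $B^2$) survives.

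For part (i), I would use $\tfrac{d}{dt}\tr(B) = \tr\left(\tfrac{d}{dt}B\right)$ and take the trace of \eqref{a'norm} term by term. Since the trace of any commutator vanishes, both $\tr([B,[B,B^t]])=0$ and $\tr([B,B^t])=0$, so the first two terms drop out. The remaining term gives $\tr\left(\|[B,B^t]\|^2 B\right)=\|[B,B^t]\|^2\tr(B)$, and multiplying by the prefactor $\tfrac{\|A\|^2}{2}$ yields exactly (i).

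For part (ii), I would write $\tfrac{d}{dt}\tr(B^2)=2\tr\left(B\,\tfrac{d}{dt}B\right)$, using cyclicity of the trace, and again substitute \eqref{a'norm}. Now I need the two identities $\tr\left(B[B,[B,B^t]]\right)=0$ and $\tr\left(B[B,B^t]\right)=0$; both are instances of the general fact $\tr(X[X,Y])=\tr(X^2 Y)-\tr(XYX)=0$, valid by cyclicity for any $X,Y$. Hence the first two terms again vanish, the surviving term contributes $\|[B,B^t]\|^2\tr(B^2)$, and the factor $2\cdot\tfrac{\|A\|^2}{2}=\|A\|^2$ produces (ii).

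There is no real analytic obstacle here: the content of the lemma is entirely algebraic. The only points requiring care are the correct application of the two vanishing trace identities (trace of a commutator, and $\tr(X[X,Y])=0$) and faithful bookkeeping of the scalar prefactor $\tfrac{\|A\|^2}{2}$ coming from the normalization in \eqref{a'norm}; once these are in place, the stated formulas fall out immediately.
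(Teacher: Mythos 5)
Your proof is correct and takes essentially the same route as the paper: the paper proves (i) from \eqref{a'} and \eqref{normA} and (ii) from \eqref{tra2} and \eqref{normA}, and since \eqref{a'norm} was itself obtained from exactly those equations, your substitution of \eqref{a'norm} followed by the trace identities $\tr([X,Y])=0$ and $\tr(X[X,Y])=0$ is the same computation, merely pre-packaged. Incidentally, your direct route quietly sidesteps a typo in \eqref{tra2} as printed (the right-hand side should be $-2\tr(S(A)^{2})\tr(A^{2})$); with that factor of $2$ restored, the paper's route yields the same coefficients you obtained.
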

\begin{proof}
To prove (i), we use (\ref{a'}) and (\ref{normA}). Part (ii) follows from (\ref{tra2}) and (\ref{normA}).
\end{proof}
\begin{theorem}\label{convnorsol}
For any sequence $t_k \rightarrow \infty,$ there exists a subsequence of $(G_{\mu_{B(t_k)}}, g_{\mu_{B(t_k)}})$
converging in the pointed topology to an algebraic soliton
$(G_{\mu_{B_{\infty}}}, g_{\mu_{B_{\infty}}}).$ Moreover, the
following conditions are equivalent:
\begin{itemize}
\item[(i)] $\Spec(A_0) \subseteq i \RR.$
\item[(ii)] $(G_{\mu_{B_{\infty}}}, g_{\mu_{B_{\infty}}})$ is flat.
\end{itemize}
\end{theorem}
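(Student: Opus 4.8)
The plan is to dispose of the first (convergence) assertion immediately and then spend all the effort on the equivalence. The statement that a subsequence of $(G_{\mu_{B(t_k)}}, g_{\mu_{B(t_k)}})$ converges in the pointed topology to an algebraic soliton $(G_{\mu_{B_\infty}}, g_{\mu_{B_\infty}})$ with $\|B_\infty\|=1$ is exactly Corollary \ref{notafunc}, so I would simply invoke it and fix the resulting limit $B_\infty$. Before the equivalence I would record three facts. First, by Lemma \ref{ffc} we have $A(t)=a(t)\varphi_t A_0\varphi_t^{-1}$, and since conjugation preserves the spectrum, $\Spec(B(t))=\tfrac{a(t)}{\|A(t)\|}\Spec(A_0)$; passing to the limit along the subsequence gives $\Spec(B_\infty)=b_\infty\Spec(A_0)$ for some real $b_\infty\ge 0$ (this is the normalized analogue of Proposition \ref{spe}). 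Second, by Remark \ref{flat}, $(G_{\mu_{B_\infty}}, g_{\mu_{B_\infty}})$ is flat if and only if $B_\infty$ is skew-symmetric. Third, a real normal matrix whose spectrum lies in $i\RR$ is automatically skew-symmetric: orthogonally block-diagonalizing it, the real eigenvalues must vanish and each $2\times 2$ block takes the form $\left(\begin{array}{cc} 0 & b \\ -b & 0 \end{array}\right)$. Finally, by Proposition \ref{solmuA}, $B_\infty$ is either normal or (special) nilpotent.

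For the direction (ii)$\Rightarrow$(i) I would argue directly. If $(G_{\mu_{B_\infty}}, g_{\mu_{B_\infty}})$ is flat, then $B_\infty$ is skew-symmetric, so $\Spec(B_\infty)\subseteq i\RR$. Since $\|B_\infty\|=1$ we have $B_\infty\neq 0$, and a nonzero skew-symmetric matrix (being normal with zero diagonal in its canonical form) has a nonzero eigenvalue, so $\Spec(B_\infty)\neq\{0\}$. Combined with $\Spec(B_\infty)=b_\infty\Spec(A_0)$ this forces $b_\infty>0$, whence $\Spec(A_0)=b_\infty^{-1}\Spec(B_\infty)\subseteq i\RR$.

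The direction (i)$\Rightarrow$(ii) is where the real work lies, and I would run it through the trace functional of Lemma \ref{derivadas}. Assuming $\Spec(A_0)\subseteq i\RR$ and using the standing hypothesis that $A_0$ is not nilpotent, $A_0$ has a nonzero purely imaginary eigenvalue, so $\tr(A_0^2)=\sum_j\mu_j^2=-\sum_j(\Im\mu_j)^2<0$; hence $\tr(B(0)^2)<0$. By Lemma \ref{derivadas}(ii), $\tfrac{d}{dt}\tr(B^2)=\|A\|^2\|[B,B^t]\|^2\,\tr(B^2)$, so that $\tr(B(t)^2)=\tr(B(0)^2)\exp\!\left(\int_0^t\|A\|^2\|[B,B^t]\|^2\,ds\right)\le \tr(B(0)^2)<0$ for all $t$, and therefore $\tr(B_\infty^2)\le\tr(B(0)^2)<0$. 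A nilpotent matrix has $\tr(B_\infty^2)=0$, so this excludes the nilpotent alternative and forces $B_\infty$ to be normal. Since $\Spec(B_\infty)=b_\infty\Spec(A_0)\subseteq i\RR$, the linear-algebra fact above makes $B_\infty$ skew-symmetric, i.e. $(G_{\mu_{B_\infty}}, g_{\mu_{B_\infty}})$ is flat.

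The step I expect to be the main obstacle is precisely the exclusion of a nonzero nilpotent limit in (i)$\Rightarrow$(ii). The spectral identity $\Spec(B_\infty)=b_\infty\Spec(A_0)$ alone permits $b_\infty=0$, in which case the nonzero eigenvalues of $A_0$ are scaled away and $B_\infty$ could be a nonzero (hence non-flat) special nilpotent soliton; indeed this genuinely occurs when $A_0$ is itself a nonzero nilpotent matrix, which is exactly why that case is set aside (it is governed by the nilmanifold results of Theorem \ref{nilva}). The monotonicity of $\tr(B^2)$ supplied by Lemma \ref{derivadas}(ii), together with the sign $\tr(A_0^2)<0$ coming from the purely imaginary, non-nilpotent spectrum, is what rules out $b_\infty=0$ and closes the argument.
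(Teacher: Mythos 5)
Your proof is correct and follows essentially the same route as the paper: Corollary \ref{notafunc} for the subsequential convergence, the spectral identity $\Spec(B(t_k))=\tfrac{a(t_k)}{\|A(t_k)\|}\Spec(A_0)$ from Lemma \ref{ffc}, and the sign/monotonicity of $\tr(B(t)^2)$ from Lemma \ref{derivadas}(ii) combined with Proposition \ref{solmuA} to exclude the nilpotent soliton alternative and force $B_\infty$ normal, hence skew-symmetric and flat. You are in fact slightly more careful than the paper at two points: you make explicit the standing assumption that $A_0$ is not nilpotent (without which $\tr(B(0)^2)<0$ fails and the theorem itself is false, as the nilmanifold case of Theorem \ref{nilva} shows), and in (ii)$\Rightarrow$(i) you rule out the degenerate scaling $b_\infty=0$ using $\|B_\infty\|=1$, a step the paper's bare appeal to the spectral identity leaves implicit.
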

\begin{proof}
As $\|B(t)\| = 1,$ every sequence has a convergent subsequence, i.e., $B(t_k)$ converge to $B_{\infty},$ which is an algebraic soliton (see Corollary \ref{notafunc}).
By using (\ref{fc}), we have that
\begin{equation}\label{normspe}
\Spec(B(t_k)) = \Spec(\tfrac{A(t_k)}{\|A(t_k)\|}) = \tfrac{a(t_k)}{\|A(t_k)\|} \Spec(A_0).
\end{equation}
If $\Spec(A_0) \subseteq i \RR,$ then $\tr(B(0)^2) = \tr \left(\tfrac{{A_0}^2}{\|A_0\|^2}\right) < 0,$ and so by Proposition \ref{derivadas} (ii), we have that $\tr(B(t)^2) < 0$ for all $t,$ and $\tr(B(t)^2)$ is a decreasing function. It follows that $\tr({B_{\infty}}^2) < 0$ and then $B_{\infty}$ is normal, as $B_{\infty}$ is an algebraic soliton (see Proposition \ref{solmuA}). So, by (\ref{normspe}), we have that $\Spec(B_{\infty}) \subseteq i \RR$ and so $B_{\infty}$ is a skew-symmetric matrix.
Conversely, if $B_{\infty}$ is skew-symmetric, then $\Spec(B_{\infty}) \subseteq i \RR$, so, by using (\ref{normspe}), we have
that $\Spec(A_0) \subseteq i \RR.$
\end{proof}
Here again, we wonder ourselves what happens with the $\omega$-limit of $\tfrac{A(t)}{\|A(t)\|}.$ Recall that in Section \ref{punlim} we saw that if $\tr(A_0) = 0,$ then the $\omega$-limit of $\tfrac{A(t)}{\|A(t)\|}$ is a single point. In the following proposition we analyze the case $\tr(A_0) \neq 0.$
\begin{proposition}\label{omelim}
If $\tr(A_0)\neq 0$ and $B(t_k) \rightarrow B_{\infty},$ for some sequence $t_k \rightarrow \infty,$ then the $\omega$-limit of $\tfrac{A(t)}{\|A(t)\|}$ is contained in $O(n).B_{\infty}.$
\end{proposition}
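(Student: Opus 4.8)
The plan is to show that the scalar relating $B(t)$ to $A_0$ converges, so that every subsequential limit of $B(t)$ carries one and the same spectrum, and then to use the normality of these limits (forced by the soliton structure) to conclude that they are mutually conjugate under $O(n)$ acting by conjugation ($k.B = kBk^{t}$).

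First I would record the basic setup. Since $\tr(A_0)\neq 0$, Lemma \ref{trneq0} gives $A(t)\to 0$; in particular $B(t)$ stays on the unit sphere of $\g$, so its $\omega$-limit is a nonempty compact set of unit vectors, and by Lemma \ref{func} and Corollary \ref{notafunc} every point of this $\omega$-limit is an algebraic soliton (it is a fixed point of the normalized flow, the Lyapunov function $F$ being strictly decreasing off the soliton set).

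The key step is to pin down the spectrum along the \emph{whole} flow. By Lemma \ref{derivadas} (i), $\tr(B)$ solves the linear ODE $\tfrac{d}{dt}\tr(B)=\tfrac12\|A\|^2\|[B,B^t]\|^2\,\tr(B)$ with non-negative coefficient, so $\tr(B(t))$ keeps the sign of $\tr(B(0))=\tr(A_0)/\|A_0\|\neq0$ and $|\tr(B(t))|$ is non-decreasing. Since $|\tr(B)|=|\la B,I\ra|\le\sqrt{n}$ it is bounded, hence $\tr(B(t))\to\tau_\infty$, and as the integrating factor is $\ge1$ we get $|\tau_\infty|\ge|\tr(B(0))|>0$. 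On the other hand, trace is conjugation invariant, so by Lemma \ref{ffc} one has $\tr(B(t))=\tfrac{a(t)}{\|A(t)\|}\tr(A_0)$; writing $c(t):=a(t)/\|A(t)\|$ and using $\tr(A_0)\neq0$, I conclude $c(t)\to c_\infty:=\tau_\infty/\tr(A_0)\neq0$. Consequently $\Spec(B(t))=c(t)\Spec(A_0)\to c_\infty\Spec(A_0)$, i.e.\ the spectrum of $B(t)$ converges as $t\to\infty$, not merely along a subsequence.

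Finally I would combine these facts. Any limit point $B'$ of $B(t)$ is an algebraic soliton with $\tr(B')=c_\infty\tr(A_0)\neq0$, so $B'$ is not nilpotent, hence normal by Proposition \ref{solmuA}, and $\Spec(B')=c_\infty\Spec(A_0)$. Thus every point of the $\omega$-limit is a real normal matrix with exactly the same spectrum $c_\infty\Spec(A_0)$. The real spectral theorem then finishes the argument: two real normal matrices with equal complex spectrum are orthogonally conjugate, since each is orthogonally similar to the same canonical block-diagonal form (real eigenvalues together with $2\times2$ rotation--scaling blocks for the conjugate pairs) determined by the spectrum. Hence every limit point lies in $O(n).B_\infty$. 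I expect the genuine obstacle to be the convergence of the scaling factor $c(t)$: it is precisely this --- obtained from the monotone ODE for $\tr(B)$ and the hypothesis $\tr(A_0)\neq0$ --- that prevents different $\omega$-limit points from sitting at different scales, and so forces them all into a single $O(n)$-orbit; the rigidity statement for normal matrices is then routine linear algebra.
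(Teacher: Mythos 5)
Your proposal is correct and follows essentially the same route as the paper: monotonicity of $\tr(B)$ from Lemma \ref{derivadas}(i) pins down the limiting scale, Lemma \ref{ffc} transfers this to the spectrum via $\Spec(B(t))=\tfrac{a(t)}{\|A(t)\|}\Spec(A_0)$, nonzero trace rules out the nilpotent case of Proposition \ref{solmuA} so every limit point is normal, and orthogonal conjugacy of normal matrices with equal spectrum concludes. The only (harmless) difference is presentational: you establish convergence of the scalar $c(t)$ once and for all, whereas the paper compares two arbitrary subsequential limits $B_{\infty}^1$, $B_{\infty}^2$ directly.
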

\begin{proof}
Let $A_0$ be such that $\tr(A_0) \neq 0$ and we suppose that $B(t_k) \rightarrow B_{\infty}^1$ and $B(s_l) \rightarrow B_{\infty}^2.$ We want to see that $B_{\infty}^1$ and $B_{\infty}^2$ are conjugate by an orthogonal matrix.
\begin{itemize}
  \item If $\tr(A_0)<0,$ then $\tr(B(0))<0$ and by Proposition \ref{derivadas} (i), $\tr(B(t))<0,$ and therefore $\tr(B(t))$ is a decreasing function and it follows that $\tr(B(t)) < \tr(B(0)),$  for all $t.$
  \item If $\tr(A_0)>0,$ then $\tr(B(0))>0$ and by Proposition \ref{derivadas} (i), $\tr(B(t))>0,$ and therefore $\tr(B(t))$ is an increasing function and it follows that $\tr(B(0)) < \tr(B(t)),$ for all $t.$
\end{itemize}
Then, $\tr(B_{\infty}^1) \neq 0$ and $\tr(B_{\infty}^2) \neq 0.$ Furthermore, the function $\tr(B(t))$ is either increasing or decreasing. So, $\tr(B_{\infty}^1) = \tr(B_{\infty}^2).$ From this and (\ref{fc}) it follows that
$$
\lim_ {k\rightarrow \infty} \tfrac{a(t_k)}{\|A(t_k)\|} \tr(A_0)=\tr(B_{\infty}^1)= \tr(B_{\infty}^2)= \lim_ {l\rightarrow \infty} \tfrac{a(s_l)}{\|A(s_l)\|} \tr(A_0).
$$ and
$$
\Spec(B_{\infty}^1) = \lim_ {k\rightarrow \infty} \tfrac{a(t_k)}{\|A(t_k)\|} \Spec(A_0) = \lim_ {l\rightarrow \infty} \tfrac{a(s_l)}{\|A(s_l)\|} \Spec(A_0) = \Spec(B_{\infty}^2).
$$
Finally, we observe that $B_{\infty}^1$ and $B_{\infty}^2$ are normal matrices, since $\mu_{B_{\infty}^1}$ and $\mu_{B_{\infty}^2}$ are algebraic solitons (see Corollary \ref{notafunc}), and so, $B_{\infty}^1$ and $B_{\infty}^2$ are normal or nilpotent (see Proposition \ref{solmuA}). As $\tr(B_{\infty}^1) \neq 0$ and $\tr(B_{\infty}^2) \neq 0,$ they are not nilpotent matrices. Then, we have two normal matrices with the same spectrum, from which it follows that they are conjugate by an orthogonal matrix (see \cite{Hffmn}).
\end{proof}
\section{Negative Curvature}\label{curv}
In this section, we are interested in how the curvature evolves along the Ricci flow. We define the sectional curvature $K$ of $(\ggo,\ip),$ a Lie algebra endowed with an inner product, as the sectional curvature of $(G,g),$ where $G$ is the simply connected Lie group with Lie algebra $\ggo$ and $g$ is the left-invariant metric in $G$ such that $g(0)=\ip.$ In the case of $(\RR^{n+1},\mu_A,\ip),$ we simply denote it by $K_A.$ We say that a Riemannian manifold has negative curvature, and denote it by $K<0,$ if all sectional curvatures are strictly negative.

Next, we enunciate two results proved by Heintze in \cite{Hntz}. Theorems \ref{solk} and \ref{adneg} give necessary and sufficient conditions for certain solvable Lie algebras with an inner product to have negative curvature and for a solvable Lie algebra to admit an inner product with negative curvature, respectively.
\begin{theorem}\cite[Theorem 1]{Hntz}\label{solk}
 Let $(\ggo,\ip)$ be a solvable Lie algebra with an inner product such that the derived algebra is abelian (i.e., $\ggo'=[\ggo,\ggo]$ abelian). Then  $K<0$ if and only if the following conditions hold:
\begin{itemize}
  \item[(A)]$\dim \ggo'=\dim \ggo-1.$
  \item[(B)] There exists a unit vector $A_0 \in \ggo,$ orthogonal to $\ggo',$ such that $D_0:\ggo'\rightarrow\ggo'$ is positive definite, where $D_0$ is the symmetric part of $\ad_{A_0}|_{\ggo'}:\ggo'\rightarrow\ggo'.$
  \item[(C)] If $S_0$ is the skew-symmetric part of $\ad_{A_0}|_{\ggo'}$, then $D_0^2 + [D_0,S_0]|_{\ggo'}$ is also positive definite.
\end{itemize}
\end{theorem}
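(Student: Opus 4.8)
The plan is to reduce everything to explicit formulas for the left-invariant Levi--Civita connection $\nabla$ and the sectional curvature, obtained from Koszul's formula, and to exploit heavily that $\ggo'$ is an \emph{abelian ideal}. Write $\vg=(\ggo')^{\perp}$. For $X,Y\in\ggo'$ all brackets among them vanish, so $\la\nabla_X Y,Z\ra=0$ for $Z\in\ggo'$, while for $U\in\vg$ one gets $\la\nabla_X Y,U\ra=\la D_U X,Y\ra$, where $D_U$ is the symmetric part of $\ad_U|_{\ggo'}$. Thus $\nabla_X Y\in\vg$ with a completely explicit expression; similarly $\nabla_X U\in\ggo'$, and $\nabla_U X$ splits into a $\ggo'$-part governed by the skew part $S_U$ of $\ad_U|_{\ggo'}$ and a $\vg$-part governed by $[\vg,\vg]$. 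With these formulas the whole theorem becomes a curvature computation over three kinds of $2$-planes: planes inside $\ggo'$, planes inside $\vg$, and mixed planes spanned by one vector of each type.

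For the necessity direction, assume $K<0$. The crucial step is (A): one must rule out $\dim\vg\ge2$. Here the connection formulas give, for orthonormal $U,V\in\vg$, the identity $K(U,V)=-\tfrac34\|[U,V]\|^{2}\le0$, while the mixed curvature $K(U,X)$ picks up a \emph{nonnegative} correction $\tfrac14\sum_{Z}\la[Z,U],X\ra^{2}$ coming from $[\vg,\vg]\subseteq\ggo'$; the task is to show that these are incompatible with strict negativity unless $\dim\vg=1$, i.e. unless $\ggo$ is of Heintze (rank-one) type $\RR\ltimes\ggo'$. Granting (A), write $\ggo=\RR A_0\oplus\ggo'$ and let $D_0,S_0$ be the symmetric and skew parts of $\ad_{A_0}|_{\ggo'}$. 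Testing orthonormal $X,Y\in\ggo'$ gives $K(X,Y)=\la D_0X,Y\ra^{2}-\la D_0X,X\ra\,\la D_0Y,Y\ra$; diagonalizing $D_0$ shows all its eigenvalues share a sign, so after replacing $A_0$ by $-A_0$ it is positive definite, which is (B). Testing the plane $(A_0,X)$ gives, after collecting symmetric and skew parts, the clean identity $K(A_0,X)=-\la(D_0^{2}+[D_0,S_0])X,X\ra$, whose negativity for every unit $X\in\ggo'$ is exactly the positive-definiteness in (C).

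For the sufficiency direction, assume (A)--(C). Because $\ggo'$ has codimension one, every $2$-plane meets $\ggo'$, so it is spanned by a unit $X\in\ggo'$ and a unit $w=cA_0+Y$ with $Y\in\ggo'$, $Y\perp X$. Expanding $\la R(X,w)w,X\ra$ multilinearly and using the curvature symmetries, the two genuinely mixed terms $\la R(X,A_0)Y,X\ra$ and $\la R(X,Y)A_0,X\ra$ vanish (each reduces to the pairing of a vector of $\ggo'$ with $A_0$), leaving
\[
K(X,w)=-c^{2}\la(D_0^{2}+[D_0,S_0])X,X\ra+\la D_0X,Y\ra^{2}-\la D_0X,X\ra\,\la D_0Y,Y\ra .
\]
By (C) the first term is $\le0$, and by (B) together with the Cauchy--Schwarz inequality for the positive-definite form $\la D_0\cdot,\cdot\ra$ the last two terms are $\le0$; checking the equality cases ($c=0$ forces $X,Y$ independent, hence strict Cauchy--Schwarz) shows $K(X,w)<0$ for all planes, i.e. $K<0$.

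The main obstacle is condition (A): the curvature identities for planes inside $\vg$ and for mixed planes are easy to write down but do not by themselves force $\dim\vg=1$, since the positive correction term can in principle be absorbed when $D_U$ is large. Establishing codimension one is precisely the rigidity input of Heintze's structure theorem for homogeneous spaces of negative curvature, and is the step I expect to require the most care; by contrast, once (A) is granted, (B), (C) and the full sufficiency argument are routine consequences of the connection formulas above.
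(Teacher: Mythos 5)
The first thing to note is that the paper contains no proof of this statement to compare against: Theorem \ref{solk} is imported verbatim from Heintze \cite[Theorem 1]{Hntz} and used as a black box in Section \ref{curv}, so your attempt can only be measured against Heintze's original argument, whose general strategy (Koszul formula plus a case analysis of $2$-planes) you do follow. The parts you actually execute are correct. With $\ggo'$ abelian and (A) granted one gets $\nabla_XY=\la D_0X,Y\ra A_0$, $\nabla_XA_0=-D_0X$, $\nabla_{A_0}X=S_0X$ for $X,Y\in\ggo'$, whence $K(X,Y)=\la D_0X,Y\ra^2-\la D_0X,X\ra\la D_0Y,Y\ra$ and $R(X,A_0)A_0=-(D_0^2+[D_0,S_0])X$; your eigenvector test for (B) (with the harmless replacement $A_0\mapsto-A_0$, under which $D_0^2+[D_0,S_0]$ is invariant, and a trivial separate remark needed when $\dim\ggo'=1$) and your identification of (C) with $K(A_0,X)<0$ are right. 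The sufficiency computation is also sound: $R(X,Y)A_0=0$ and $\la R(X,A_0)Y,X\ra=\la R(Y,X)X,A_0\ra=0$ because $R(Y,X)X\in\ggo'$, so the cross terms in $K(X,cA_0+Y)$ vanish exactly as you claim, and your Cauchy--Schwarz equality analysis correctly yields strict negativity in both cases $c=0$ and $c\neq0$. Your auxiliary formulas $K(U,V)=-\tfrac34\|[U,V]\|^2$ and $K(U,X)=-\la(D_U^2+[D_U,S_U])X,X\ra+\tfrac14\sum_i\la[W_i,U],X\ra^2$ for $U,V\in\vg=(\ggo')^\perp$ also check out against the Koszul formula.

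The genuine gap is the necessity of (A), and your way of handling it is circular: you defer it to ``the rigidity input of Heintze's structure theorem,'' but the codimension-one statement for these metric Lie algebras \emph{is} part of the theorem being proved; you cannot cite the theorem's conclusion as an input to its proof. Moreover, as you yourself observe, the identities you have written down do not close the gap: negativity on planes inside $\vg$ only forces $[U,V]\neq0$ for independent $U,V\in\vg$, and the natural next step --- imposing the discriminant inequality $\la R(U,V)V,X\ra^2<K(U,V)\,K(X,V)$ along the family of planes spanned by $V$ and $\cos\theta\,U+\sin\theta\,X$ with $X=[U,V]/\|[U,V]\|$, where one computes $\la R(U,V)V,X\ra=\|[U,V]\|\,\la D_VX,X\ra$ --- produces no contradiction when $D_V$ is large, because the positive correction $\tfrac14\sum_i\la[W_i,V],X\ra^2$ can be absorbed, exactly as you suspected. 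Some genuinely new structural input is required here; for instance, since $[U,V]\in\ggo'$ and $\ggo'$ is abelian, the Jacobi identity forces the operators $\ad_U|_{\ggo'}$, $U\in\vg$, to commute pairwise, a fact your outline never exploits and which is the kind of rigidity Heintze's argument turns on. As it stands, your proposal proves the equivalence of $K<0$ with (B) and (C) \emph{assuming} (A), but not the theorem as stated.
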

\begin{remark}\label{notathm2}
We observe that in the case of $\mu_A,$ the assumption that the
derived algebra is abelian is always true. Furthermore, $K_A<0$ if
and only if conditions (A) - (C) hold. If in addition $A$ is normal and invertible, then $K_A<0$ if and only if (B)
holds, since condition (A) is satisfied as $A$ is invertible and condition (C) follows from (B).
\end{remark}
\begin{theorem}\cite[Theorem 3]{Hntz}\label{adneg}
Let $\ggo$ be a solvable Lie algebra. Then the following conditions are equivalent:
\begin{itemize}
  \item[(i)]$\ggo$ admits an inner product with negative curvature.
  \item[(ii)] $\dim \ggo'=\dim \ggo-1$ and there exists $A_0 \in \ggo$ such that $\Re (\Spec (\ad_{A_0}|_{\ggo'}))>0.$
\end{itemize}
\end{theorem}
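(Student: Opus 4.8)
The plan is to prove the two implications separately, using throughout Milnor's formula \cite{Mln} for the sectional curvature $K$ of a left-invariant metric, namely
\[
\la R(X,Y)Y,X\ra = -\tfrac{3}{4}\|[X,Y]\|^2 -\tfrac{1}{2}\la[X,[X,Y]],Y\ra -\tfrac{1}{2}\la[Y,[Y,X]],X\ra +\|U(X,Y)\|^2 -\la U(X,X),U(Y,Y)\ra,
\]
where $U$ is the symmetric bilinear map determined by $2\la U(X,Y),Z\ra=\la[Z,X],Y\ra+\la[Z,Y],X\ra$. A preliminary observation used in both directions is that, for any solvable $\ggo$, the derived algebra $\ggo'$ is nilpotent (by Lie's theorem $\ad_X$ is nilpotent for every $X\in\ggo'$, so Engel applies); hence $\ggo'$ is the nilradical $\ngo$ whenever it has codimension one, which is consistent with condition (ii).

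For (ii)$\Rightarrow$(i) I would argue constructively. Write $\ggo=\RR A_0\oplus\ngo$ with $\ngo=\ggo'$ and set $D=\ad_{A_0}|_{\ngo}$, a derivation of $\ngo$ with $\Re(\Spec D)>0$. By the Lyapunov stability theorem one can choose an inner product on $\ngo$ for which the symmetric part $\sym(D)=\tfrac{1}{2}(D+D^t)$ is positive definite. Extend it to $\ggo$ by declaring $\ngo\perp\RR A_0$ and $\|A_0\|=1$, and then deform the metric by replacing $A_0$ with $cA_0$, i.e.\ by rescaling the radial direction. Under this rescaling $D$ is effectively replaced by $cD$, so in Milnor's formula every contribution built from $\sym(D)$ --- both the mixed planes $A_0\wedge X$, $X\in\ngo$, and, through the $A_0$-component of $U$, the planes inside $\ngo$ --- grows like $-c^2$ times a positive quantity, while the contribution of the fixed nilpotent bracket $[\ngo,\ngo]$ stays $O(1)$. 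Hence $K<0$ for $c$ large; this is the non-abelian analogue of conditions (B)--(C) in Theorem \ref{solk}. The only care needed is the bookkeeping showing the $\sym(D)$-terms are uniformly negative definite on every $2$-plane, which is exactly where positive-definiteness of $\sym(D)$ enters.

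For (i)$\Rightarrow$(ii) assume a metric with $K<0$ and put $\ag=\ggo\ominus\ggo'$. A direct computation with the formula above, using that $[\ggo,\ggo]=\ggo'$ is orthogonal to $\ag$, shows that the double-bracket terms vanish and $U(X,X)=U(X,Y)=0$ for $X,Y\in\ag$, so $K(X,Y)=-\tfrac{3}{4}\|[X,Y]\|^2$ for orthonormal $X,Y\in\ag$. Thus no two independent elements of $\ag$ commute. Since $\ggo'\subsetneq\ggo$ for any nonzero solvable $\ggo$ we have $\dim\ag\ge1$; the substantive point is the reverse bound $\dim\ag\le1$, extracted by feeding the brackets $[X,Y]\in\ggo'$ back into the curvature formula and exhibiting a plane of nonnegative curvature whenever $\dim\ag\ge2$. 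Once codimension one is known I would set $\ag=\RR A_0$, $\|A_0\|=1$, and evaluate the mixed planes: for unit $X\in\ggo'$ one finds
\[
0> K(A_0,X)=-\tfrac{3}{4}\|DX\|^2-\tfrac{1}{2}\la D^2X,X\ra+\tfrac{1}{4}\|D^tX\|^2,\qquad D=\ad_{A_0}|_{\ggo'},
\]
and this inequality constrains $D$ enough to conclude, after passing to the real forms of its eigenvectors, that every eigenvalue has positive real part, giving (ii).

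The hard part will be the codimension-one assertion in the direction (i)$\Rightarrow$(ii). The single-plane estimate $K(X,Y)=-\tfrac{3}{4}\|[X,Y]\|^2$ is by itself consistent with $\dim\ag\ge2$, so ruling this out genuinely requires a global analysis of all sectional curvatures --- examining the planes spanned by elements of $\ag$ together with their brackets in $\ggo'$ --- rather than any single inequality. This is the core of Heintze's theorem \cite{Hntz}, and is the step I would expect to be most delicate; by contrast the constructive direction and the eigenvalue computation are comparatively routine applications of the Lyapunov theorem and Milnor's formula.
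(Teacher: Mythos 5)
This statement is quoted in the paper from Heintze (\cite[Theorem 3]{Hntz}); the paper itself gives no proof, so your proposal can only be judged on its own merits and against Heintze's argument. Your sufficiency direction (ii)$\Rightarrow$(i) is the right strategy and is essentially what the paper's Theorems \ref{solk} and \ref{solub} encode: choose, by a Lyapunov-type argument, an inner product on $\ngo=\ggo'$ making $\sym(D)$ positive definite, and then rescale the $A_0$-direction (equivalently the bracket $[A_0,\cdot]$, as in Theorem \ref{solub}) so that the $c^2$-terms dominate the fixed nilpotent bracket terms; modulo the bookkeeping you yourself flag, this is sound. Your preliminary computation $K(X,Y)=-\tfrac{3}{4}\|[X,Y]\|^2$ for orthonormal $X,Y\perp\ggo'$ is also correct.

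The necessity direction, however, contains two genuine gaps. First, the codimension-one assertion is not proved at all: as you concede, $K(X,Y)=-\tfrac34\|[X,Y]\|^2$ is perfectly consistent with $\dim\ag\geq 2$ whenever the bracket $\ag\times\ag\to\ggo'$ is nondegenerate, and you defer the exclusion of this case to an unspecified ``global analysis.'' That step is the heart of Heintze's theorem, so what you have is a proof sketch, not a proof. Second, your eigenvalue step is wrong as stated: the mixed-plane inequality alone does not force $\Re(\Spec(D))>0$. Writing $D=D_0+S_0$ (symmetric plus skew parts), your displayed formula simplifies (in the abelian model) to
$$
K(A_0,X)=-\la (D_0^2+[D_0,S_0])X,X\ra ,
$$
so $K(A_0,X)<0$ for all $X$ is exactly condition (C) of Theorem \ref{solk}, and (C) does not imply positive real spectrum. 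Concretely, take $D_0=\left(\begin{smallmatrix}1&0\\0&-1\end{smallmatrix}\right)$ and $S_0=\left(\begin{smallmatrix}0&-s\\ s&0\end{smallmatrix}\right)$ with $0<s<\tfrac12$: then $D_0^2+[D_0,S_0]=\left(\begin{smallmatrix}1&-2s\\-2s&1\end{smallmatrix}\right)$ is positive definite, so every mixed plane has $K<0$, yet $D$ has eigenvalues $\pm\sqrt{1-s^2}$, one of each sign. What rules this out is the planes \emph{inside} $\ggo'$, where the $U$-terms give $K(X,Y)=\la D_0X,Y\ra^2-\la D_0X,X\ra\la D_0Y,Y\ra$ in the abelian case (indeed $K(e_1,e_2)=1>0$ in the example), forcing $D_0$ itself to be definite --- this is precisely why Theorem \ref{solk} needs both (B) and (C) --- and only then does $\Re\lambda\,\|v\|^2=\la D_0v,v\ra>0$ yield (ii). In the general nonabelian case even this step acquires extra bracket terms, which is why Heintze's necessity argument does not reduce to the single inequality you wrote down.
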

\begin{remark}\label{notathm3}
Note that if $A$ is invertible, then $G_{\mu_A}$ admits a left-invariant metric with $K<0$ if and only if either $\Re (\Spec (A))>0$ or $\Re (\Spec (A))<0.$
\end{remark}
\begin{theorem}\label{to2}
 Let $G_{\mu_{A_0}}$ be a solvable Lie group that admits a left-invariant metric with negative curvature. If $\mu_{A(t)}$ is the bracket flow starting at $\mu_{A_0},$ then there exists $s_0 \in \RR$ such that $K_{A(t)}<0,$ for all $t \geq s_0.$
\end{theorem}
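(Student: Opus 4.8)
The plan is to translate the hypothesis into a spectral condition on $A_0$ via Heintze's criteria, reduce the statement to the normalized flow $B(t)=\tfrac{A(t)}{\|A(t)\|}$ using the scale- and $O(n)$-invariance of the sign of the sectional curvature, identify the normalized limit as a normal matrix with positive real spectrum (hence negatively curved by Heintze), and finally promote this from a subsequential limit to all large $t$ using the orbit description of the $\omega$-limit in Proposition \ref{omelim} together with the openness of the condition $K<0$.

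First I would note that the hypothesis forces $A_0$ to be invertible: otherwise $\dim\mathfrak{g}'=\rank(A_0)<n=\dim\mathfrak{g}-1$, and Theorem \ref{adneg} excludes any negatively curved left-invariant metric. Hence Remark \ref{notathm3} gives $\Re(\Spec(A_0))>0$ or $\Re(\Spec(A_0))<0$. Since $\mu_A$ and $\mu_{-A}$ are conjugate by $\mathrm{diag}(-1,I)\in O(n+1)$ they are isometric, so $K_A=K_{-A}$, and $-A(t)$ solves \eqref{a'} with datum $-A_0$; thus, replacing $A_0$ by $-A_0$ if necessary, I may assume $\Re(\Spec(A_0))>0$, whence $\tr(A_0)>0$ and $\Spec(A_0)\nsubseteq i\RR$. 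Because scaling the bracket by a positive constant amounts to a homothety of the metric (an instance of \eqref{isom}) and conjugating $A$ by $\varphi\in O(n)$ — i.e. applying $h=\mathrm{diag}(1,\varphi)\in O(n+1)$ in \eqref{isom} — yields an isometric manifold, the sign of the sectional curvature is invariant under both operations. In particular $K_{A(t)}<0$ if and only if $K_{B(t)}<0$, and $K$ is constant along $O(n)$-orbits.

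Next I would identify the limit. Fix $t_k\to\infty$ with $B(t_k)\to B_\infty$ (possible since $\|B(t)\|=1$). By Theorem \ref{convnorsol}, $\mu_{B_\infty}$ is an algebraic soliton, and it is non-flat because $\Spec(A_0)\nsubseteq i\RR$. Since $\tr(A_0)>0$, Lemma \ref{derivadas}(i) shows $\tr(B(t))$ is positive and non-decreasing, so $\tr(B_\infty)\ge\tr(B(0))>0$ and $B_\infty$ is not nilpotent; by Proposition \ref{solmuA}, $B_\infty$ is therefore normal. From \eqref{normspe} one has $\Spec(B_\infty)=c\,\Spec(A_0)$ with $c=\lim_k\tfrac{a(t_k)}{\|A(t_k)\|}\ge 0$, and $c>0$ (otherwise $B_\infty$ would be nilpotent), so $B_\infty$ is invertible with $\Re(\Spec(B_\infty))>0$. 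As $B_\infty$ is normal, the eigenvalues of its symmetric part $S(B_\infty)$ are exactly the real parts of $\Spec(B_\infty)$, hence $S(B_\infty)$ is positive definite; taking the unit vector $e_0\perp\mathfrak{g}'=\RR^n$ shows condition (B) of Theorem \ref{solk} holds, and by Remark \ref{notathm2} (the normal, invertible case) we conclude $K_{B_\infty}<0$.

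It remains to upgrade this to all large $t$, and this is the main obstacle, since a priori $B(t)$ converges only along subsequences. Here I would invoke Proposition \ref{omelim}: as $\tr(A_0)\neq 0$, the $\omega$-limit of $B(t)$ is contained in the single orbit $O(n).B_\infty$, on which $K$ is constant and equal to $K_{B_\infty}<0$. Now the set $\{C\in\g:K_C<0\}$ is open, because the sectional curvature depends continuously on the matrix and on the $2$-plane, the Grassmannian of $2$-planes is compact, and so $C\mapsto\max_\sigma K_C(\sigma)$ is continuous with $\{K<0\}$ its preimage of $(-\infty,0)$. The $\omega$-limit set $\Omega$ is nonempty and compact with $\Omega\subseteq O(n).B_\infty\subseteq\{K<0\}$; since $\mathrm{dist}(B(t),\Omega)\to 0$ as $t\to\infty$ and $\{K<0\}$ is open, there is $s_0$ with $B(t)\in\{K<0\}$ for all $t\ge s_0$. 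By the scaling invariance recorded above, $K_{A(t)}<0$ for all $t\ge s_0$, as claimed.
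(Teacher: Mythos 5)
Your proposal is correct and follows essentially the same route as the paper's proof: reduce to the normalized flow $B(t)$, use Remark \ref{notathm3} together with the monotonicity of $\tr(B(t))$ (as in Proposition \ref{omelim}) and Proposition \ref{solmuA} to show every subsequential limit $B_{\infty}$ is normal and invertible with $\Spec(B_{\infty})$ a nonzero multiple of $\Spec(A_0)$, deduce $K_{B_{\infty}}<0$ from Remark \ref{notathm2}, and conclude by compactness of the unit sphere. Your $\omega$-limit-plus-openness endgame and the WLOG sign normalization via $\mathrm{diag}(-1,I)$ are only cosmetic variants of the paper's subsequence--contradiction argument, which handles both signs of $\Re(\Spec(A_0))$ directly.
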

\begin{proof}
It is sufficient to prove that the theorem holds for $B(t)=\tfrac{A(t)}{\|A(t)\|},$ i.e. there exists $t_0 \in \RR$ such that $K_{B(t)}<0,$ for all $t \geq t_0.$ Indeed, for each $t,$ $\mu_{A(t)}$ and $\mu_{B(t)}$ differ only by scaling.

By assumption, $G_{\mu_{A_0}}$ admits a left-invariant metric with negative curvature, then by using Remark \ref{notathm3} we have that either $\Re(\Spec(A_0)) > 0$ or $\Re(\Spec(A_0)) < 0.$

Assume that, after passing to a subsequence, $B(t_k)$ converges to $B_{\infty},$ as $k \rightarrow \infty.$ Then, arguing as in Proposition
\ref{omelim}, we have that $B_{\infty}$ is normal and
$$
\Spec(B_{\infty}) = \alpha \Spec(A_0), \quad \alpha \in \RR,
\alpha \neq 0.
 $$ so, either $\Re (\Spec (B_{\infty}))>0$ or $\Re (\Spec (B_{\infty}))<0.$ Then $S(B_{\infty})$ is either positive or negative definite. It follows by Remark \ref{notathm2} that $K_{B_{\infty}}<0.$ Thus, there exists $L \in \NN$ such that $K_{B(t_k)}<0,$ for all $k \geq L.$

Finally, there must exist $t_0$ such that $K_{B(t)}<0,$ for all $t \geq t_0,$ otherwise we would be able to extract a convergent subsequence $B(t_k),$ whose sectional curvatures are not strictly negative, and this contradicts the previous paragraph. 
\end{proof}
We now show that the above theorem is not longer valid in the general solvable case.
\begin{example}\label{ejsol} We consider $(\mu_{\lambda,\alpha}, \ip)$ defined as follows:
$$
\begin{array}{cc}
  \mu_{\lambda,\alpha}(e_0,e_i)=\alpha \left(
              \begin{array}{ccc}
                \lambda &  &  \\
                 & 1-\lambda &  \\
                 &  & 1 \\
              \end{array}
            \right)e_i,
 & \mu_{\lambda,\alpha}(e_1,e_2)=e_3,
\end{array}
$$ and $\ip$ the inner product for which $\{e_0,e_1,e_2,e_3\}$ is an orthonormal basis. By \cite[Theorem 4.8]{riccisol}, we know that $(\mu_{\lambda,\alpha}, \ip)$ is an algebraic soliton if and only if $\alpha=\tfrac{\sqrt{3}}{\sqrt{2(\lambda^2+(1-\lambda)^2+1)}}.$ We consider the $2$-dimensional plane $\pi = \la e_1,e_3\ra$ and we compute its sectional curvature:
$$
K(e_1,e_3)=\|U(e_1,e_3)\|^2-\langle U(e_1,e_1), U(e_3,e_3) \rangle = \tfrac{1}{4} - \tfrac{3 \lambda}{\lambda^2+(1-\lambda)^2+1}.
$$ So,
$$K(e_1,e_3) \geq 0 \quad \Leftrightarrow \quad \tfrac{1}{4} - \tfrac{3 \lambda}{\lambda^2+(1-\lambda)^2+1} \geq 0 \quad \Leftrightarrow \quad \lambda \leq 2-\sqrt{3} \quad \mbox{ \'o} \quad \lambda \geq 2+\sqrt{3}.$$
We observe that if $0 < \lambda \leq 2-\sqrt{3},$ then $0 < 1-\lambda,$ and so $\ad (e_0)$ is a matrix such that $\Re (\Spec( \ad(e_0)))>0.$ Then, Theorem \ref{adneg} said that if $0 < \lambda \leq 2-\sqrt{3},$ then $(\mu_{\lambda,\alpha}, \ip),$ with $\alpha=\tfrac{\sqrt{3}}{\sqrt{2(\lambda^2+(1-\lambda)^2+1)}},$ admits an inner product with negative curvature. On the other hand, since $(\mu_{\lambda,\alpha}, \ip)$ is an algebraic soliton, if $\mu(t)$ is the bracket flow starting at $\mu_{\lambda,\alpha},$ then $(G_{\mu(t)},g_{\mu(t)})$ has planes with curvature bigger than or equal to zero. 
\end{example}
The next question is what happens with the Ricci flow when we start with a metric whose sectional curvatures are all negative. First, we will introduce a theorem proved by Heintze in \cite{Hntz}.

Let $(\ggo,\ip)$ be a solvable Lie algebra with an inner product such that (A) - (C) of the Theorem \ref{solk} hold. Then, we have a orthogonal decomposition $\ggo={A_0 + [\ggo,\ggo]}.$ For $\alpha >0,$ let $(\ggo_{\alpha},\ip)$ be the Lie algebra with the same inner product that $(\ggo, \ip)$ but with the following modification in the Lie bracket
$$
[A_0,X]_{\alpha}:= \alpha [A_0,X], \mbox{ para todo } X \in \ggo'=\ggo_{\alpha}'.
$$
\begin{theorem}\cite[Theorem 2]{Hntz}\label{solub}
 Let $(\ggo,\ip)$ be a solvable Lie algebra with an inner product and assume that (A)-(C) hold. Then there exists $\alpha_0 >0$ such that $(\ggo_{\alpha},\ip)$ has negative curvature for all $\alpha \geq \alpha_0.$
\end{theorem}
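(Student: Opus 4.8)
The plan is to exploit the fact that, as $\alpha\to\infty$, the rescaled bracket lets the $\ad_{A_0}$-part dominate, so that the geometry approaches that of a Heintze algebra with \emph{abelian} derived algebra, to which Theorem \ref{solk} applies directly. Fix the orthogonal decomposition $\ggo=\RR A_0\oplus\ggo'$ supplied by (A)--(B), and write $\ad_{A_0}|_{\ggo'}=D_0+S_0$. By construction the only structure constants of $\ggo_\alpha$ that change are those of $[A_0,\cdot]_\alpha=\alpha\,\ad_{A_0}|_{\ggo'}$, while the bracket on $\ggo'$ is untouched; thus every structure constant of $\ggo_\alpha$ is affine-linear in $\alpha$, of the form $\alpha a+b$, where $a$ records the $[A_0,\cdot]$-part and $b$ the $\ggo'$-part. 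The key observation is that the sectional curvature of a metric Lie algebra is \emph{homogeneous of degree two} in the structure constants: each term of $\langle R(X,Y)Y,X\rangle$ involves exactly two bracket operations, via the Levi-Civita connection $\nabla_X Y=\tfrac12([X,Y]-\ad_X^{*}Y-\ad_Y^{*}X)$. Consequently, for every orthonormal pair $X,Y$,
$$
K_\alpha(X,Y)=\alpha^2\,K_{\ggob}(X,Y)+\alpha\,P_1(X,Y)+P_0(X,Y),
$$
where $\ggob$ denotes the Lie algebra with the \emph{same} inner product and the \emph{same} operator $\ad_{A_0}|_{\ggo'}=D_0+S_0$ but with $\ggo'$ made abelian (so that only the $\alpha^2$-homogeneous part survives), and $P_1,P_0$ collect the cross and the $\ggo'$-bracket contributions and are independent of $\alpha$.

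Next I would verify that $\ggob$ falls under Theorem \ref{solk}. Its derived algebra is abelian by construction. Condition (B) for $\ggo$ gives $D_0$ positive definite, whence $\langle (D_0+S_0)v,v\rangle=\langle D_0 v,v\rangle>0$ for $v\neq 0$; thus $\ad_{A_0}|_{\ggo'}$ is injective, hence invertible, so $\ggob'=\ad_{A_0}|_{\ggo'}(\ggo')=\ggo'$ has codimension one and (A) holds for $\ggob$. Conditions (B) and (C) for $\ggob$ coincide verbatim with those assumed for $\ggo$, since $\ggob$ shares the operator $D_0+S_0$. Hence Theorem \ref{solk} yields $K_{\ggob}<0$ for every $2$-plane of $\ggob$.

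A compactness argument then closes the proof. The Grassmannian of $2$-planes in $\ggo$ is compact and $K_{\ggob}$ is continuous on it, so it attains a maximum $-\delta<0$; likewise $P_1$ and $P_0$ are continuous, hence bounded, say by $M$ in absolute value. Therefore
$$
K_\alpha(X,Y)\leq -\delta\,\alpha^2+M\,\alpha+M
$$
uniformly in the plane $\la X,Y\ra$, and the right-hand side is negative once $\alpha$ is large enough. Choosing $\alpha_0$ accordingly gives $K_{\ggo_\alpha}<0$ for all $\alpha\geq\alpha_0$.

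I expect the main obstacle to be the clean identification of the leading $\alpha^2$-coefficient with the curvature of the abelian-derived Heintze algebra $\ggob$: one must be careful that the degree-two homogeneity of the curvature really isolates the pure $\ad_{A_0}$-terms as the $\alpha^2$ part, and that $\ggob$ genuinely inherits conditions (A)--(C) (in particular the invertibility of $\ad_{A_0}|_{\ggo'}$, which is where (B) is used). Once this limiting picture is established, the uniform negativity of $K_{\ggob}$ over the compact Grassmannian makes the persistence of negative curvature for large finite $\alpha$ essentially automatic.
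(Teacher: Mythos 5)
You should note at the outset that the paper contains no proof of this statement to compare against: it is imported verbatim from Heintze \cite{Hntz} (his Theorem 2), so your argument has to be judged on its own, and it holds up. Your route is in fact the natural (and essentially Heintze's original) perturbation strategy: since only the $\ad_{A_0}$-part of the bracket is rescaled, the bracket of $\ggo_\alpha$ is $\mu_\alpha=\alpha\mu_1+\mu_0$ with $\mu_1,\mu_0$ fixed; the curvature of a metric Lie algebra is a homogeneous degree-two polynomial in the bracket for a fixed inner product (a fact the paper itself exploits, e.g. $\|\Riem(\mu_A)\|=\|\mu_A\|^2\,\|\Riem(\mu_A/\|\mu_A\|)\|$ in the proof of Proposition \ref{type3}); and the $\alpha^2$-coefficient of $K_\alpha(\sigma)$ is precisely the sectional curvature of $\ggob$, the semidirect product of $\RR A_0$ with $\ggo'$ made abelian. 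Your verification that $\ggob$ inherits (A)--(C) is the genuine key step, and you locate the crucial use of (B) correctly: positive definiteness of $D_0$ forces $\ad_{A_0}|_{\ggo'}$ to be invertible, so $\ggob'=\ggo'$ and (A) holds, while (B) and (C) are literally the same conditions on the same operator $D_0+S_0$; Theorem \ref{solk} then gives $K_{\ggob}<0$, and compactness of the Grassmannian of $2$-planes makes the bound $K_\alpha(\sigma)\le -\delta\alpha^2+M\alpha+M$ uniform in $\sigma$, forcing $K_{\ggo_\alpha}<0$ for all $\alpha\ge\alpha_0$. Two points you could state explicitly to make the write-up airtight, though neither is a gap: (i) $\mu_1$ is itself a Lie bracket (it is $\RR\ltimes(\text{abelian})$ via the derivation $\ad_{A_0}|_{\ggo'}$), so the leading quadratic term really is a sectional curvature and Theorem \ref{solk} genuinely applies to it; and (ii) the coefficients $P_1,P_0$ are well-defined continuous functions of the plane $\sigma$ rather than of the chosen orthonormal pair $(X,Y)$ --- for instance $P_0(\sigma)=K_{\mu_0}(\sigma)$ is the curvature of the Lie algebra in which $A_0$ is central, and $P_1$ is then determined by evaluating the quadratic polynomial $\alpha\mapsto K_\alpha(\sigma)$ at three values of $\alpha$ --- which is exactly what your compactness argument needs.
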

We return to Example \ref{ejsol}. Let $\lambda$ be fixed and  we consider the bracket flow $\mu(t)$ starting at $\mu_{\alpha,\lambda}.$ Then $\mu(t)$ is given by
$$
\mu(t)(e_0,e_i)=\alpha(t)\left(
                           \begin{array}{ccc}
                             \lambda &  &  \\
                              & 1-\lambda &  \\
                              &  & 1 \\
                           \end{array}
                         \right)e_i, \quad \mu(t)(e_1,e_2)=h(t)e_3,
$$ with $\alpha=\alpha(t)$ and $h=h(t)$ that satisfy the following differential equations:
$$
\left\{
  \begin{array}{ll}
    \alpha'=-c_{\lambda}\alpha^3, & \alpha(0)=\alpha, \\
    h'=-\tfrac{3}{2}h^3, & h(0)=1,
  \end{array}
\right.
$$ where $c_{\lambda}=(\lambda^2+(1-\lambda)^2+1).$ Furthermore, solving the equations we obtain that $\alpha(t)=\tfrac{1}{\sqrt{2c_{\lambda}t+\alpha^{-2}}}$ and $h(t)=\tfrac{1}{\sqrt{3t+1}}.$ Clearly, in this case, the bracket flow converge to a flat metric, but for fixed $t,$ we have that
$$
K(e_1,e_3) = \tfrac{h^2}{4} - \lambda \alpha^2 = \tfrac{1}{4(3t+1)} - \tfrac{\lambda}{2c_{\lambda}t+\alpha_0^{-2}}.
$$ Then,
$$
K(e_1,e_3) \geq 0 \quad \Leftrightarrow \quad \tfrac{1}{4(3t+1)} \geq \tfrac{\lambda}{2c_{\lambda}t+\alpha_0^{-2}} \quad \Leftrightarrow \quad (2 c_{\lambda} - 12 \lambda)t \geq 4 \lambda - \alpha_0^{-2}.
$$ Further, $2 c_{\lambda} - 12 \lambda = 4 ((\lambda - 2)^2-3).$ So, if $0 < \lambda \leq 2-\sqrt{3},$ there exists $t_0$ such that $K(e_1,e_3) \geq 0, \forall t \geq t_0.$

Let $\lambda$ be such that $0 < \lambda \leq 2-\sqrt{3},$ and we consider $\mu_{\alpha,\lambda}, \alpha \in \RR_{>0}.$ Then $(\mu_{\alpha,\lambda},\ip)$ is a solvable Lie algebra with an inner product that satisfies (A) - (C). By Theorem \ref{solub}, we know that there exists $\alpha_0 >0$ such that $((\mu_{\alpha,\lambda})_{\alpha_0},\ip)$ has negative curvature. Then, $(\mu_{\alpha \alpha_0,\lambda},\ip)$ has a negative curvature.
On the other hand, we know that if $\mu(t)$ is the bracket flow starting at $\mu_{\alpha \alpha_0,\lambda}$ there exists $t_0$ such that $\forall t \geq t_0,$ $(G_{\mu(t)},g_{\mu(t)})$ has planes with curvature bigger than or equal to zero.

\end{document}